\documentclass[11pt, reqno]{amsart}
\usepackage{graphicx, epsfig, psfrag}
\usepackage{amsfonts}
\usepackage{amssymb}
\usepackage{amsmath}
\usepackage{cases}
\usepackage{xcolor}
\usepackage{float}
\usepackage{hyperref}
\usepackage{theorem}
\usepackage{mathrsfs}
\usepackage{pdfsync}
\usepackage{subfig}
\usepackage{booktabs}

\oddsidemargin 6pt
\evensidemargin 6pt
\marginparwidth 48pt
\marginparsep 10pt
\topmargin -18pt
\headheight 12pt
\headsep 25pt
\footskip 30pt
\textheight 625pt
\textwidth 451pt
\columnsep 10pt
\columnseprule 0pt
\allowdisplaybreaks[4]

\newcounter{theorem}
\def\openthm#1#2{\refstepcounter{theorem}\bigskip

{\noindent\bf#1~\thetheorem\if#2!{. }\else{ (#2).}\fi}
\it}

\def\thmskip{}
\newenvironment{theorem}[1][!]{\openthm{Theorem}{#1}}{\thmskip}
\newenvironment{lemma}[1][!]{\openthm{Lemma}{#1}}{\thmskip}

\newenvironment{proposition}[1][!]{\openthm{Proposition}{#1}}{\thmskip}

\newcounter{assumption}
\def\openass#1#2{\refstepcounter{assumption}\bigskip

{\noindent\bf#1~\theassumption\if#2!{. }\else{ (#2).}\fi}
\it}
\def\thmskip{}
\newenvironment{assumption}[1][!]{\openass{Assumption}{#1}}{\thmskip}


\newcounter{remark}
\def\openrem#1#2{\refstepcounter{remark}\bigskip
{\noindent \it \bfseries#1~\theremark\if#2!{. }\else{ (#2). }\fi}}
\newenvironment{remark}[1][!]{\openrem{Remark}{#1}}{\qed}

\newcounter{algorithm}
\def\openalg#1#2{\refstepcounter{algorithm}\bigskip
{\noindent \it \bfseries#1~\thealgorithm\if#2!{. }\else{ (#2). }\fi}}
\newenvironment{algorithm}[1][!]{\openalg{Algorithm}{#1}}{\qed}



\def\R{\mathbb{R}}

\def\Z{\mathbb{Z}}


\def\dx{\,{\rm d}x}


\def\<{\langle}
\def\>{\rangle}

\def\argmin{{\rm argmin}}
\def\sign{{\rm sgn}}
\def\conv{{\rm conv}}

\def\eps{\varepsilon}

\def\Ps{\mathcal{P}}
\def\Ys{\mathcal{Y}}
\def\Us{\mathcal{U}}

\def\As{\mathcal{A}}
\def\Cs{\mathcal{C}}
\def\Is{\mathcal{I}}
\def\Ls{\mathcal{L}}
\def\Ns{\mathcal{N}}
\def\Ss{\mathcal{S}}

\def\mrLs{\mathring{\mathcal{L}}}

\def\Ks{\mathscr{K}}

\def\Ts{\mathscr{T}}

\def\tom{\widetilde{\omega}}
\def\th{\widetilde{h}}

\def\tN{\tilde{N}}

\def\Es{\mathcal{E}}

\def\Ys{\mathcal{Y}}


\def\gb{\boldsymbol{g}}


\def\ac{{\rm ac}}
\def\cp{{\rm cp}}
\def\a{{\rm a}}

\def\c{{\rm c}}
\def\i{{\rm i}}
\def\NN{{\rm NN}}
\def\NNN{{\rm NNN}}

\def\M{{\rm M}}
\def\LJ{{\rm LJ}}

\def\mo{{\rm mo}}
\def\cg{{\rm cg}}

\def\leftSup{{\rm left}}
\def\rightSup{{\rm right}}
\def\interface{{\rm int}}

\def\z{{\rm z}}

\def\hybrid{{\rm hybrid}}

\def\Os{\mathcal{O}}

\renewcommand{\cases}[1]{\left\{ \begin{array}{rl} #1 \end{array} \right.}

\numberwithin{equation}{section}

\begin{document}

\title[Analysis of a Posteriori Error Estimators for an A/C Coupling Method]{Analysis of the Residual  Type and the Recovery Type a Posteriori Error Estimators for a Consistent Atomistic-to-Continuum Coupling Method in 1D}

\author{Hao Wang}
\address{Hao Wang\\
College of Mathematics\\
Sichuan University\\
No.24 South Section One, Yihuan Road\\
Chengdu, 610065\\
China}
\email{wangh@scu.edu.cn}

\author{Siyao Yang}
\address{Siyao Yang\\
Department of Mathematics, Faculty of Science\\
National University of Singapore\\
21 Lower Kent Ridge Road, 119077\\
Singpaore}
\email{siyao\_yang@u.nus.edu}
\date{\today}

\begin{abstract}
We consider the \emph{a posteriori} error estimation for an atomistic-to-continuum coupling scheme for a generic one-dimensional many-body next-nearest-neighbour interaction model in 1D. We derive and rigorously prove the efficiency of the residual type estimator. We prove the equivalence between the residual type and the gradient recovery type estimator in the continuum region and propose a (novel) hybrid a \emph{a posteriori} error estimator by combining the two types of estimators. Our numerical experiments illustrate the optimal convergence rate of the adaptive algorithms using these estimators whose efficiency factors are also presented.
\end{abstract}

\subjclass[2010]{65N12, 65N15, 70C20}
\keywords{atomistic-to-continuum coupling;
quasicontinuum method; a posteriori error analysis}

\thanks{H.W. was supported by NSFC grant 11501389, 11471214 and Sichuan University Starting Up Research Funding No. 2082204194117. S.Y. was supported by the President's Graduate Fellowship of National University of Singapore.}

\maketitle

\section{Introduction}
\label{Sec: Intro}
Atomistic-to-continuum coupling methods (a/c methods) are a class of multiscale methods for coupling an atomistic description of a solid to a matching continuum elasticity model. Such methods combine the accuracy of the atomistic simulation and the efficiency of the continuum model in the way that the atomistic model is applied in the region where localized crystal defects, such as vacancies, dislocations and defects, may happen and the continuum model is used in the elastic far field together with finite element discretization to reduce the degree of freedom. We refer to \cite{MO_TEB_First_QC_1996, Shimokawa_QNL_Original_2004, E_Lu_Yang_Geo_Consitent_2006, Li_Luskin_QNL_2012, Shapeev_Consisten_A_C_1D_2D,  Shapeev_ACC_3D, CO_LZ_QC_2D_2011, CO_LZ_Finite_GRAC_2014} for the construction of such methods and  \cite{MRE_TEB_QCReview_2003, MRE_TDB_QC_Review_2009, CO_ML_ACCoupling_2013} for the reviews.

Numerical analysis for the a/c methods has been an important research field in the computational mathematics community and considerable effort has been given to the \emph{a priori} analysis \cite{Lin_1D_LJ_QC_Ana_2003, Lin_QCP_2D_2007, CO_ES_1D_QC_2008, MD_ML_Opt_Order_2009, PM_ZY_QNL_2009, CO_QNL1D_2011, CO_HW_QC_1D_A_Priori_2011, CO_2D_Patch_Test, CO_AS_ACC_2D_Analysis} where the issue of model error which is committed by the artificial coupling interface between different models has been extensively discussed. However, the problems of \emph{a posteriori} error control and adaptivity for the coupling methods has attracted comparatively little attention. First noticable results on the a posteriori error analysis were given in \cite{AM_A_Post_ML_FK_Model_2007, AM_A_Post_ML_FK_Model_2008_Model, AM_A_Post_ML_FK_Model_2008_FEM, Prudhomme_Bauman_Adaptive_A_to_C_2009} where the \emph{goal-oriented} a posteriori error estimator have been derived for the original energy-based quasicontinuum method \cite{MO_TEB_First_QC_1996}. Such approach requires the use of the solutions of dual problems which may cause additional cost of computation. Moreover, since the original energy-based quasicontinuum method exhibits large model error on the coupling interface, the size of the atomistic region tends to be larger than needed and thus further increase the computational effort. The residual based a posteriori error estimate, which is the first approach employed in the current work, is first derived in \cite{CO_ES_1D_QC_2008} for a coarse-grained atomistic model. However, since no coupling of different models occurs, the coarse-grained scheme in \cite{CO_ES_1D_QC_2008} is essentially different from the a/c methods we analyze. A recent advance in this direction is the a posteriori error estimates for consistent energy-based coupling methods in one- and two-dimensional settings \cite{CO_HW_ACC_1D_A_Posteriori} and \cite{HW_ML_PL_LZ_2D_A_Post} in which a posteriori error estimators are derived through a residual based approach which is similar to \cite{CO_ES_1D_QC_2008} with the coupling feature of a/c method being kept.

In the computational material science community, adaptivity has always been used with a/c coupling methods since the methods were developed. One of the most commonly used a posteriori error estimators is the gradient recovery error estimator which was first developed for second order elliptic equations in \cite{Zienkiewicz_Zhu_Error_Estimator_1987} and appeared in a/c literature in \cite{MO_EBT_QC_Adaptive_99}. The advantage of the gradient average error estimator lies in its simplicity and consequently low cost of computation especially in higher dimensions. However, no rigorous justification of using gradient recovery estimator for a/c method has been given.

The present work attempts to bridge the gap between theoretical analysis and computational practice.

The first goal of the present work can be considered as an extension of the analysis in \cite{CO_HW_ACC_1D_A_Posteriori} where we derive the residual based error estimator for the geometry reconstruction based atomistic-to-continuum coupling (GRAC) method \cite{CO_LZ_QC_2D_2011,CO_LZ_Finite_GRAC_2014} which can be applied to many-body finite range interaction models. In addition to that, we prove the efficiency of the residual based error estimator which essentially shows that such error estimator, up to a computable constant, provides a local lower bound for the true error.

The second goal of the present work is to construct and analyze the gradient recovery type a posteriori error estimator for a/c method which is much more widely used in computational practice. We prove the equivalence between the gradient recovery type error estimator and the residual based error estimator in the continuum region. In addition, to fully reflect the influence of the interface, we formulate the so called hybrid type a posteriori error estimator by combing the recovery type and the residual type error estimators.


We restrict our analysis to a 1D periodic many-body next-nearest-neighbour interaction atomistic system in order to present our idea in a simple setting. Although in a 1D setting, the technicality and subtlety of the analysis makes the present work nontrivial which we consider as a valuable step towards the analysis in higher dimensional cases.

\subsection{Outline}
\label{Sec: Outline}
In \ref{Sec: Model}, we formulate the atomistic model and its GRAC approximation. We define the weak problems according to the formulations of the two different models. We also introduce the notation that is used throughout the derivation and the analysis.

In \ref{Sec: Residual Based EE}, we establish the residual based a posteriori error estimator for the GRAC method and in Section \ref{Sec: Efficiency}, we prove the efficiency of the residual based a posteriori error estimator.

In \ref{Sec: Gradient Recover EE}, we construct and analyze the gradient recovery a posteriori error estimator  \cite{VerfurthAPosteriori} for the a/c method we consider and formulate the hybrid a posteriori error estimator which preserves certain good properties.

In \ref{Sec: Numeric Exp}, we describe the mesh refinement algorithms according to different a posteriori error estimators and present numerical examples to illustrate the convergence of these algorithms.


\section{The Atomistic Model and its GRAC approximation}
\label{Sec: Model}


\subsection{Atomistic model}
\label{Sec: A Model}
We consider a model problem in the domain $\Omega = (-1, 1]$ containing $2N$ atoms, with the set of indices of the lattice sites $\Ls := \{-N+1, -N+2, \ldots, N-1, N \}$ and the periodic extentions $\Omega_\sharp = \R$ and $\Ls_\sharp = \Z$. Let $\varepsilon=1/N$ be the lattice spacing, and let $F>0$ be a macroscopic deformation gradient. Following previous works \cite{MD_ML_Opt_Order_2009, CO_QNL1D_2011, CO_HW_QC_1D_A_Priori_2011, CO_HW_ACC_1D_A_Posteriori} and to avoid unnecessary diffculties with boundaries, we impose periodic and mean zero boundary conditions on the space of displacements and define
\begin{align}
\label{def: atom displacement space}
&\Us^{\eps}:=\{u:\Ls_\sharp \rightarrow\mathbb{R}^{\Z}:v_{\ell+2N}=v_{\ell}~and~\sum_{\ell=-N+1}^{N}v_{\ell}=0\}.
\end{align}
The corresponding admissible set of deformations is defined by
\begin{align}
\label{def: atom deformation set}
&\Ys^{\eps}:=\left\{y:\Ls_\sharp \rightarrow\mathbb{R}^{\Z}:y_{\ell}=\varepsilon F\ell+v_{\ell},v\in\Us^\eps \right\}.
\end{align}

For a map $y \in \Ys^{\eps}$, we define the finite difference operators
\begin{align}
&D_{1}y_\ell:=(y_{\ell+1} - y_{\ell})/{\eps}, D_{2}y_\ell :=(y_{\ell+2} - y_{\ell})/\eps, \nonumber \\
&D_{-1}y_\ell:=(y_{\ell-1}-y_{\ell})/\eps, D_{-2}y_\ell:=(y_{\ell-2} - y_{\ell})/\eps,
\label{def: 1 2 finite differences}
\end{align}
and $D y_\ell:=(D_1 y_\ell, D_2 y_\ell, D_{-1}y_\ell, D_{-2}y_\ell)$. Note $D_{\pm1} y_\ell$ are essentially the forward and backward finite differences and denote the rescaled nearest-neghbour distances whereas $D_{\pm2} y_\ell$ denote the rescaled next-nearest-neighbour distances.

We assume that the atomistic system is modeled by a next-nearest-neighbor many-body site energy, which include models such as the Embedded Atom Method (EAM) \cite{EAM_Daw_Baskes_1984, Tadmor_Miller_MM_2011}, and the internal energy in one period under $y \in \Ys^{\eps}$ is
\begin{equation}
\mathscr{E}_{\a}(y):=\varepsilon \sum_{\ell \in \Ls}V(D y_\ell),
\label{eq: atomistic stored energy}
\end{equation}
where $V \in C^3((0, +\infty]^4;{\R})$.

We include a periodic dead load $f \in \Us^\eps$ in the model to create a nontrivial deformation that mimics the influence of a dislocation or a defect which may appear in higher dimensional cases. The \emph{external energy} (per period) under a given deformation is defined by $-\<f,y\>_{\varepsilon}:=-\varepsilon  \sum_{\ell \in \Ls} f_{\ell}y_{\ell}$ and the \emph{total energy} (per period) under a deformation $y\in\Ys$ is then given by
\begin{equation}
E_{\a}(y):=\mathscr{E}_{\a}(y) - \<f,y\>_{\varepsilon},
\label{eq: atom total energy}
\end{equation}
and the solution we seek for the atomistic problem is
\begin{equation}
\label{eq: A_problem}
y_{\a} \in \argmin E_{\a}(\Ys).
\end{equation}
The following proposition characterizes the first optimality condition of the atomistic problem \eqref{eq: A_problem}. 
\begin{proposition}
Let $y_{\a}$ be a solution to the atomistic problem \eqref{eq: A_problem} and assume $\min_{\ell} Dy^{\a}_\ell>0$. Suppose further that $V$ is differentiable at $y_{\a}$. Then $y_{\a}$ satisfies the following variational problem
\begin{equation}
  \label{A_firstV}
\<\delta\mathscr{E}_{\a}(y),v\>: =\varepsilon\sum_{\ell\in\mathscr{L}}\sigma^{\a}_{\ell}(y) v'_{\ell} = \varepsilon\sum_{\ell\in\mathscr{L}} f_\ell v_\ell = \<f,v\>_{\varepsilon} \ \forall v \in \Us^{\eps},
\end{equation}
where $v_\ell' := D_1 v_\ell$ and the atomistic stress tensor $\sigma^{\a}_{\ell}(y_{\a})$ is given by
\begin{align}
\sigma^{\a}_{\ell}(y):=&\partial_{1}V(Dy_{\ell-1})- \partial_{-1}V(Dy_{\ell}) \notag \\
&+\partial_{2}V(Dy_{\ell-2})+\partial_{2}V(Dy_{\ell-1})
 -\partial_{-2}V(Dy_{\ell})-\partial_{-2}V(Dy_{\ell+1}),
\label{eq: Q^a_l}
\end{align}
where $\partial_{i} V(\gb) := \partial V(\gb) / \partial (g_i)$ for $\gb := (g_1, g_2, g_{-1}, g_{-2})   \in {\R^{+}}^4$.
\end{proposition}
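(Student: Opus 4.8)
The plan is to read the statement as the derivation of the first optimality (Euler--Lagrange) condition for the minimization problem \eqref{eq: A_problem}, so the proof is a stationarity argument followed by a discrete summation by parts. First I would record that the admissible set $\Ys^{\eps}$ is an affine translate of the linear space $\Us^{\eps}$: if $y_{\a}\in\Ys^{\eps}$ and $v\in\Us^{\eps}$, then $y_{\a}+tv\in\Ys^{\eps}$ for every $t\in\R$. The hypothesis $\min_{\ell}Dy^{\a}_\ell>0$ places each $Dy^{\a}_\ell$ strictly inside the region on which $V$ is $C^3$, so that, together with the assumed differentiability of $V$ at $y_{\a}$, the scalar map $t\mapsto E_{\a}(y_{\a}+tv)$ is differentiable near $t=0$. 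Since $y_{\a}$ minimizes $E_{\a}$ over $\Ys^{\eps}$, this map attains a minimum at $t=0$, whence $\frac{\dd}{\dd t}\big|_{t=0}E_{\a}(y_{\a}+tv)=0$ for every $v\in\Us^{\eps}$. The external part contributes linearly, $\frac{\dd}{\dd t}\big|_{t=0}\<f,y_{\a}+tv\>_{\eps}=\<f,v\>_{\eps}$, which already supplies the right-hand side of \eqref{A_firstV}.

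It then remains to compute $\<\delta\mathscr{E}_{\a}(y_{\a}),v\>:=\frac{\dd}{\dd t}\big|_{t=0}\mathscr{E}_{\a}(y_{\a}+tv)$. Since $D(y_{\a}+tv)_\ell=Dy^{\a}_\ell+t\,Dv_\ell$, the chain rule gives
\begin{equation*}
\<\delta\mathscr{E}_{\a}(y_{\a}),v\>=\eps\sum_{\ell\in\Ls}\Big(\pp_{1}V(Dy^{\a}_\ell)\,D_1v_\ell+\pp_{2}V(Dy^{\a}_\ell)\,D_2v_\ell+\pp_{-1}V(Dy^{\a}_\ell)\,D_{-1}v_\ell+\pp_{-2}V(Dy^{\a}_\ell)\,D_{-2}v_\ell\Big).
\end{equation*}

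The core of the argument is to recast this single sum in the stress form $\eps\sum_\ell\sigma^{\a}_\ell(y_{\a})v'_\ell$. I would reduce every finite difference above to the forward difference $v'=D_1v$ by the elementary telescoping identities
\begin{equation*}
D_2v_\ell=v'_\ell+v'_{\ell+1},\qquad D_{-1}v_\ell=-v'_{\ell-1},\qquad D_{-2}v_\ell=-(v'_{\ell-1}+v'_{\ell-2}),
\end{equation*}
and then perform a discrete summation by parts, i.e.\ shift the summation index in each resulting sum so that its factor becomes $v'_\ell$. The $2N$-periodicity of $Dy^{\a}$ and of $v$ guarantees that each shifted sum may again be taken over a single period $\Ls$ with no boundary contribution, and collecting the coefficient of $v'_\ell$ assembles precisely the six terms of $\sigma^{\a}_\ell(y_{\a})$ in \eqref{eq: Q^a_l}. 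Equating $\<\delta\mathscr{E}_{\a}(y_{\a}),v\>=\<f,v\>_{\eps}$ for all $v\in\Us^{\eps}$ then yields \eqref{A_firstV}.

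The only genuine obstacle is the bookkeeping in this final step: one must track the index shifts, which differ term by term (by $0$, $\pm1$, or $\pm2$), and verify that periodicity makes each reindexing exact, so that the $\pp_1$, $\pp_{\pm1}$ and $\pp_{\pm2}$ contributions land on the correct lattice sites to reproduce the stated stress. Everything else---stationarity from minimality, the chain rule, and the linearity of the external term---is routine.
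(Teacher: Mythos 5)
Your route---stationarity of $t\mapsto E_{\a}(y_{\a}+tv)$ using the affine structure $\Ys^{\eps}=\{\eps F\ell\}+\Us^{\eps}$, the chain rule, and a discrete summation by parts carried out by reindexing periodic sums---is certainly the intended argument; the paper in fact states this proposition without any proof, so there is nothing to compare against except the standard computation. The preparatory steps in your proposal are sound: the perturbation argument, the role of $\min_\ell Dy^{\a}_\ell>0$ and the differentiability of $V$, the linearity of the external term, and the three telescoping identities for $D_2v_\ell$, $D_{-1}v_\ell$, $D_{-2}v_\ell$ in terms of forward differences are all correct.

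The genuine gap is exactly the step you deferred as ``bookkeeping'' and never executed: with your convention $v'_\ell:=D_1v_\ell=(v_{\ell+1}-v_\ell)/\varepsilon$, the index shifts do \emph{not} assemble the six terms of \eqref{eq: Q^a_l}. Carrying them out, the first variation is
\[
\varepsilon\sum_{\ell\in\mathcal{L}}\Big[\partial_1V(Dy_\ell)v'_\ell+\partial_2V(Dy_\ell)(v'_\ell+v'_{\ell+1})-\partial_{-1}V(Dy_\ell)v'_{\ell-1}-\partial_{-2}V(Dy_\ell)(v'_{\ell-1}+v'_{\ell-2})\Big],
\]
and collecting the coefficient of $v'_\ell$ after reindexing (legitimate by periodicity) gives
\[
\partial_1V(Dy_\ell)+\partial_2V(Dy_\ell)+\partial_2V(Dy_{\ell-1})-\partial_{-1}V(Dy_{\ell+1})-\partial_{-2}V(Dy_{\ell+1})-\partial_{-2}V(Dy_{\ell+2}),
\]
which is $\sigma^{\a}_{\ell+1}$ in the notation of \eqref{eq: Q^a_l}, not $\sigma^{\a}_{\ell}$. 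In other words, the stress formula \eqref{eq: Q^a_l} pairs with the \emph{backward} difference $(v_\ell-v_{\ell-1})/\varepsilon$---the convention the paper adopts in its notation subsection, which contradicts the inline definition $v'_\ell:=D_1v_\ell$ inside the proposition itself. The repair is one line: by periodicity $\sum_\ell\sigma^{\a}_{\ell+1}v'_\ell=\sum_\ell\sigma^{\a}_{\ell}v'_{\ell-1}$, and $v'_{\ell-1}=(v_\ell-v_{\ell-1})/\varepsilon$, so \eqref{A_firstV} holds once $v'$ is read as the backward difference. As written, however, your proposal claims the forward-difference assembly lands ``precisely'' on \eqref{eq: Q^a_l}; that claim is false under the convention you chose, and since the entire content of the proposition beyond routine calculus is exactly this identification, the unexecuted (and, as set up, failing) bookkeeping is a real gap rather than an omissible detail. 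You should either switch to backward differences at the outset or add the final reindexing step, noting the paper's own notational inconsistency.
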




\subsection{Atomistic-to-continuum Coupling}
\label{Sec: A/C Coupling}
We adopt the geometry reconstruction based atomistic-to-continuum coupling (GRAC) method as our coupling model. This coupling method was first proposed in \cite{E_Lu_Yang_Geo_Consitent_2006} for 2D  many-body system with flat coupling interfaces and was extended in \cite{CO_LZ_QC_2D_2011, CO_LZ_Finite_GRAC_2014} for general interfaces. We use the same idea to construct our A/C coupling model for our 1D system.

\subsubsection{The interface energy}
\label{Sec: Interface Energy}
To formulate the coupling method, we first decompose the lattice $\Ls$ into $\As, \Is$ and $\Cs$, where $\As$ denotes the set of lattice sites inside which full atomistic accuracy is required, $\Is$ denotes the set of interface lattice sites such that
\begin{equation}
\Is := \{l \in \Ls \setminus \As| l + j \in \As, j = 1,2,-1,-2 \},
\label{def: interface atoms indices}
\end{equation}
and $\Cs:= \Ls \setminus (\As \cup \Is) $ then denotes the remaining lattice sites. The coupling energy in one period (without coarse-graining) for a $y \in \Ys^\eps$ is then given by
\begin{align}
 \label{eq: stored coupling energy point summation}
\mathscr{E}_{\ac}(y)&=\eps \sum_{\ell \in \As} V(Dy_\ell)+\eps\sum_{\ell \in \Is} V^{\i}_\ell (Dy_\ell)
+\eps\sum_{\ell \in \Cs}V^{\c} (Dy_\ell),
\end{align}
where $V^{\c}$ is the continuum site energy which is obtained by Cauchy-Born rule \cite{MB_KH_Crystal_Latices, Blanc_From_A_to_C_2002} and in our case is defined by
\begin{equation}
V^{\c} (Dy_\ell) = V(D_1 y_\ell, 2D_1 y_\ell, D_{-1} y_\ell, 2 D_{-1} y_\ell).
\label{def: cont. site energy}
\end{equation}
$V^{\i}$ is the reconstructed interface site energy such that the so called \emph{patch test} conditions are satisfied, i.e., $\forall \ell \in \Is$ and $F \in \R$
\begin{equation}
\label{eq: consistency}
V^{\i}_{\ell}(Dy^{F}_\ell)=V(Dy^{F}_\ell), \   \text{ and }
f^{\ac}_\ell(y^F) := \frac{\partial \mathscr{E}_{\ac}}{\partial y_\ell} (y^F) = 0 ,
\end{equation}
where $y^F_\ell = \eps F \ell$ is a uniform deformation under the deformation gradient $F$. The interface site energy we use in the present work is defined by
\begin{align}
\label{eq: interface site energy}
V_{\ell}^{\i}(Dy) =
\left\{
\begin{array}{l l}
V(D_{1}y_\ell,D_{2}y_\ell,D_{-1}y_\ell,2D_{-1}y_\ell),  &\text{ if } \ell +j \in \As \  j = 1, 2, \\
V(D_{1}y_\ell,2D_{1}y_\ell,D_{-1}y_\ell,D_{-2}y_\ell),  &\text{ if } \ell +j \in \As \  j = -1, -2,
\end{array} \right.
\end{align}
and one may easily check that \eqref{eq: interface site energy} satisfies \eqref{eq: consistency}.

\begin{remark}
The construction of $V^\i$ is not unique. The general form of an interface energy is $V_{\ell}^{\i}(Dy) = V(Dy_\ell C_\ell)$, where the coefficient matrix $C_\ell$ is given by 
\begin{equation*}
C_\ell =
\begin{pmatrix}
C_{\ell,1,1}&C_{\ell,2,1}&C_{\ell,3,1}&C_{\ell,4,1} \\
C_{\ell,1,2}&C_{\ell,2,2}&C_{\ell,3,2}&C_{\ell,4,2} \\
C_{\ell,1,-1}&C_{\ell,2,-1}&C_{\ell,3,-1}&C_{\ell,4,-1} \\
C_{\ell,1,-2}&C_{\ell,2,-2}&C_{\ell,3,-2}&C_{\ell,4,-2} 
\end{pmatrix},
\end{equation*}
where the $C_{\ell,i,j}$ are the \emph{reconstruction parameters} which are chosen so that \eqref{eq: consistency} is satisfied. We adopt the current form of $V^\i$ simply because it reduces to the QNL method \cite{Shimokawa_QNL_Original_2004, CO_HW_QC_1D_A_Priori_2011} if $V$  consists only pair potentials.  However, we need to note that our analysis \emph{does not} depend on the method we choose. We refer to \cite{E_Lu_Yang_Geo_Consitent_2006, CO_LZ_QC_2D_2011,CO_LZ_Finite_GRAC_2014, CO_2D_Patch_Test} for detail of the geometry reconstruction-based atomistic-to-continuum coupling methods and \cite{Shapeev_Consisten_A_C_1D_2D, Shapeev_ACC_3D, PL_AS_QCP_2009, Li_Luskin_QNL_2012} for different approaches.
\end{remark}


\subsubsection{The coupling energy with coarse-graining}
We proceed with the decomposition of the computational domain $\Omega$ into the atomistic region $\Omega_{\a}$, the interface region $\Omega_{\i}$ and the continuum region $\Omega_{\c}$ according to $\As$, $\Is$ and $\Cs$ respectively and apply a continuum model to transform the coupling energy from a pointwise summation rule to an integral form in the continuum region and coarse grain the continuum region by the finite element method to further reduce the number of degrees of freedom.

To make the above statement rigorous, we partition $\Omega$ by choosing a small number, say $K$, lattice sites as the finite element nodes and constructing the mesh $\Ts_h = \{T_k\}_{k=1}^K$ on $\Omega$ with the following properties.
\begin{itemize}
\item[(T1)]  With slight abuse of notation, the indices of the nodes are identified with the indices of the lattice sites by $\ell: \{1, \ldots, K\} \rightarrow \Ls$ such that $\ell_k := \ell(k)$ is the index of the lattice site which is also the $k$'th node in $\Ts_h$. We thus have $T_k := [x_{k-1}, x_k]$ and $x_k = \eps \ell_k$ for all $k = 1, \ldots K$. The length of $T_k$ is given by $h_{T_k} := |T_k| = x_k - x_{k-1}$. The number of atoms in a given element $T_k$ is represented by $N_{T_k}$ where $N_{T_k} = \frac{h_{T_k}}{\eps}$.
\item[(T2)]  Only one atomistic region $\Omega_{\a}$ exists in $\Omega$ which is given by $\Omega_{\a}:=(\eps \ell_{K_{1}},\eps \ell_{K_{2}})$ for some $1 < K_1 < K_2 < K$  and $\As = \{\ell_{K_1+1}, \ell_{K_1+2}, \ldots, \ell_{K_2-1}\}$ which implies that $\Ts_h$ has the atomistic resolution in $\Omega_{\a}$, i.e., every lattice site in $\As$ is a finite element node in $\Ts_h$.
\item[(T3)]  The interface region is defined to be $\Omega_{\i}:=[\eps \ell_{K_{1}-1},\eps \ell_{K_{1}}]\cup[\eps \ell_{K_{2}},\eps \ell_{K_{2}+1}]$ and $\Is = \{\ell_{K_1-1},\ell_{K_1},\ell_{K_2}, \ell_{K_2+1}\}$.
\item[(T4)]  The continuum region $\Omega_{\c} = \Omega \backslash (\Omega_{\a} \cup \Omega_{\i})$ is defined by $\Omega_{\c}:=[-\eps N,\eps \ell_{K_{1}-1})\cup (\eps \ell_{K_{2}+1},\eps N ]$ and $\Cs = \{1, \ldots, \ell_{K_{1}-2}, \ell_{K_{2}+2}, \ldots, \ell_K\}$.
\item[(T5)] The first element adjacent to the interface in the continuum region has length $\eps$ which implies that the first atom outside the atomistic and interface region is a node of $\Ts_h$.
\end{itemize}

The structure of the mesh is illustrated in Fig. \ref{Fig_mesh}.
\begin{figure}[h!]
\centering
\includegraphics[width=\textwidth]{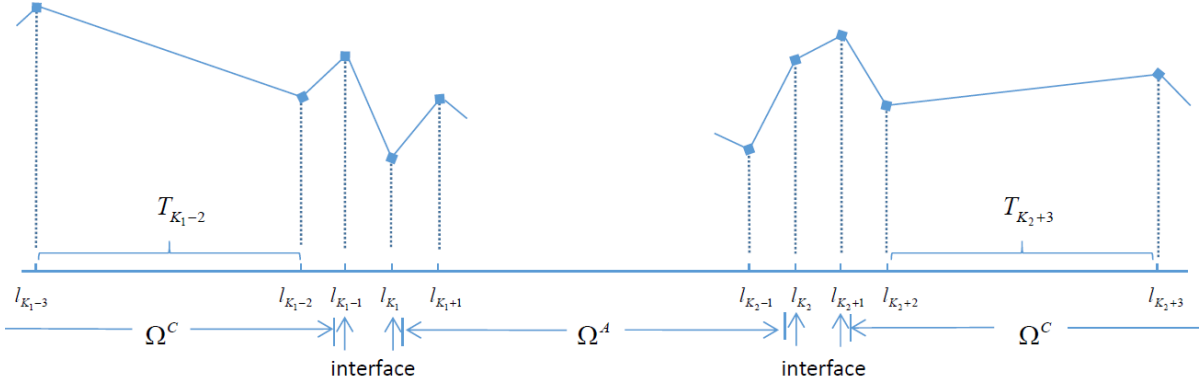}
\caption{mesh strategy for the model problem}
\label{Fig_mesh}
\end{figure}

We also define $\Ts_h^{\sharp}$ and  $\Omega^{\sharp}_{\ac}$ to be periodic extensions of $\Ts_h$ and $\Omega_{\ac}$ for $\ac \in \{\a, \i, \c\}$ such that  $x_{k+K} = 2 + x_k$ for all $k \in \Z$ and $\Omega^{\sharp}_{\ac} : = (\Omega_{\ac} +2 \Z)$. The set of the indices of the nodes in $\Ts_h$ in different regions are defined by $\mathscr{K}^{\a}:=\{K_{1}+1,...,K_{2}-1\}$, $\mathscr{K}^{\i}:=\{K_{1}-1,K_{1},K_{2},K_{2}+1\}$ and $\mathscr{K}^{\c}:=\{1, \ldots, K_{1}-2,K_{2}+2, \ldots,K\}$ respectively and we define $\mathscr{K}^{\ac} :=\mathscr{K}^{\c}\cup\mathscr{K}^{\a}\cup\mathscr{K}^{\i}$.


The coarse-grained space of displacement is defined to be
\begin{equation}
\Us^{h}:=\left\{u_h \in \Ps_1(\Ts_h):   u_h(x+1) = u_h(x) \text{ and } \int_\Omega u_h(x) \dx = 0 \right\},
\end{equation}
and the coarse-grained admissible set of deformation
\begin{equation}
\Ys^{h}:=\left\{ y_h  \in \Ps_1(\Ts_h): y_h - Fx \in \Us_h \right\},
\end{equation}
where $\Ps_1(\Ts_h)$ denotes the space of continuous piecewise affine functions with respect to $\Ts_h$.

\begin{remark}
\label{remark: solution spaces}
Note that we have changed our solution sets from sets of pointwise defined functions to those of continuous  piecewise affine functions. We emphasize that the two definitions are equivalent given the values of the function on the nodes and we therefore take the two point of views liberally for functions in $\Us^{\cp}$ and $\Ys^{\cp}$ for $\cp \in \{\eps, h\}$. Another observation is that $\Us^h \subset \Us^\eps$ and $\Ys^{h} \subset \Ys^{\eps}$ which results from the constrain that all nodes in $\Ts_h$ are on the lattice sites. Such property will exclude the nonconformity of the solutions spaces to enable us to keep the presentation simple and focus on the main issues.
\end{remark}

Having the finite element discretization, we are able to transform the coupling energy from a summation rule \eqref{eq: stored coupling energy point summation} to an integral form. We first define the \emph{Cauchy-Born energy functional} for a given $y \in W^{1,\infty}(\Omega;\R)$
\begin{equation}
\Es_{\c}(y) := \int_{\Omega} W(\nabla y) \dx,
\end{equation}
where $W(F) := V(F,2F,-F,-2F) \in C^3(0,+\infty)$ is the \emph{Cauchy-Born stored energy density}\cite{CO_LZ_QC_2D_2011}.

Let $vor(\ell)$ be the Voronoi cell (see \cite{CO_LZ_Finite_GRAC_2014}) associated with $\ell$ (obviously $|vor(\ell)|\equiv\varepsilon$). For $\ell\in\Omega_{\i}$, we choose a modified interface site potential $V^{\i}_{\ell}$ and the effective cell $v^{\i}_{\ell}= vor(\ell)$ associated with $\ell$  and define the effective volume associated with $\ell$ as $\omega_{\ell}^{\i}:=|v^{\i}_{\ell}|$. In addition, for each element $T\in\mathscr{T}_{h}$ we define the effective volume $\omega_{T}:=|T\backslash\bigcup_{\ell\in\Omega^{\i}}v^{\i}_{\ell}|$. Letting $y^h_\ell := y_h(\eps \ell)$ we redefine the A/C coupling energy for $y_h \in \Ys^h$ to be
\begin{align}
\mathscr{E}_{\ac}(y)&=\varepsilon\sum_{\ell\in \As}V(Dy^h_\ell)+\varepsilon\sum_{\ell \in \Is}\omega_{\ell}^{\i}V_{\ell}^{\i}(Dy^h_\ell)
+\sum_{T\in\mathscr{T}_{h}}\omega_{T}W(\nabla {y^h}|_{T})\notag\\
&=\varepsilon\sum_{\ell\in \As}V(Dy^h_\ell)+\varepsilon\sum_{\ell \in \Is}V_{\ell}^{i}(Dy^h_\ell)
+\int_{\Omega_{\c} \backslash\bigcup_{\ell\in\Omega_{\i}}v^{\i}_{\ell}}W(\nabla {y^h}|_{T})dx.
 \label{eq: coupling energy integral}
\end{align}

\begin{remark}
In a pointwise summation rule \eqref{eq: stored coupling energy point summation}, the energy is associated with the Voronoi cell of an atom whereas in an integral form the energy is locally defined. Since the interface energy is associated with the interface atoms, certain amount of energy should be subtracted from the energy of the adjacent continuum element and hence the effective volumes appear in the formulation \eqref{eq: coupling energy integral} as an correction to keep the total energy consistent.
By (T5) we let the lengths of elements adjacent to the interface to be $\varepsilon$, i.e., $\omega_{T_{K_{1}-1}}=\omega_{T_{K_{2}+2}}=\frac{1}{2}\varepsilon$, which keeps the correction local and simplifies our analysis.
\end{remark}

\subsubsection{Total energy and its variation}
Given $f \in \Us^\eps$ and $y_h \in \Ys^h$, we define the external energy to be $-\< f, y_h\>_\eps : = -\eps \sum_{\ell = -N+1}^{N} f_\ell y^h_\ell$. Upon defining the set of indices of lattice sites inside and on the right boundary of the element $T_k$ by
\begin{equation}
\Ls_{T_k} := \{\ell_{k-1} + 1, \ldots, \ell_k\},
\end{equation}
and the indication function  $\chi(T;\ell)$ such that
\begin{equation}
\chi(T;\ell) =
\left\{
\begin{array}{l l}
1, & \ell \in \Ls_T, \\
0, & \text{ otherwise},
\end{array} \right.
\end{equation}
we are able to associate the external energy with the nodal values of $y_h$ (see the proof in Appendix \ref{Proof_2})
\begin{equation}
 \label{projected_force}
-\<f,y_h\>_{\varepsilon}=-\sum_{k\in\mathscr{K}^{\ac}}\eps \bar{f}_{k}y^h_{\ell_{k}},
\end{equation}
where the \emph{nodewise force} $\bar{f}_{k}$ is defined by
\begin{align*}
\bar{f}_{k}=
\left\{
\begin{array}{l l}
f_{\ell_{k}},  & k\in\mathscr{K}^{\a}\cup\mathscr{K}^{\i},\\
\sum_{\ell \in \Ls_{T_k}} \frac{\ell-\ell_{k-1}}{\ell_k - \ell_{k-1}} f_{\ell},  & k=K_{1}-2,\\
\sum_{\ell \in \Ls_{T_k}\cup\{K_2+2\}} (1-\frac{\ell-\ell_{k}}{\ell_{k+1} - \ell_k})f_{\ell},  & k=K_{2}+2,\\
\sum_{\ell \in \Ls_{T_k} \cup \Ls_{T_{k+1}}}[\frac{\ell-\ell_{k-1}}{\ell_k - \ell_{k-1}}\chi(T_{k};\ell)+(\frac{\ell_{k+1}-\ell}{\ell_{k+1} - \ell_k})\chi(T_{k+1};\ell)]f_{\ell},  & otherwise.
\end{array} \right.
\end{align*}

The total energy for the coupling model with coarse graining is given by
\begin{equation}
E_{\ac}(y_h):=\mathscr{E}_{\ac}(y_h)-\<f,y_h\>_{\varepsilon},
\end{equation}
and we wish to compute
\begin{equation}
\label{eq: QC_problem}
y_{\ac}\in \argmin E_{\ac}(\Ys_{h}).
\end{equation}

The following proposition characterizes the first optimality condition of the a/c coupling problem \eqref{eq: QC_problem}. 

\begin{proposition}
  \label{QC_firstV}
Let $y_{\ac}$ be a solution to the a/c coupling problem \eqref{eq: QC_problem} and assume $\min_k (\nabla y_h|_{T_k}) >0$. Suppose further that $V$ is differentiable at $y_{\ac}$. Then there exists a unique elementwise a/c coupling stress tensor $\bar{\sigma}^{ac}_{k}(y_h)$ whose detailed formulation is given in \ref{app: coupling model stress tensor}, such that $y_{\ac}$ satisfies the following variational problem
 \begin{equation}
\<\delta\mathscr{E}_{\ac}(y_{h}),v_h\>: =\sum_{k \in \Ks^{ac}}h_{T_k} \bar{\sigma}^{\ac}_{k}(y_h) \nabla {v_h}|_{T_k} =
\sum_{{k}\in\mathscr{K}^{\ac}}\bar{f}_{{k}}v^h_{\ell_{k}} =\<f,v_h\>_{\varepsilon} \ \forall v_h \in \Us^{h}.
 \end{equation}
\end{proposition}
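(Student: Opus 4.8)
The plan is to obtain the stated identity as the first-order optimality (Euler--Lagrange) condition for the minimisation problem \eqref{eq: QC_problem} and then to reorganise the resulting expression into the claimed elementwise divergence form. Since $y_{\ac}$ minimises $E_{\ac}=\mathscr{E}_{\ac}-\<f,\cdot\>_{\varepsilon}$ over the affine space $\Ys^{h}$, and since $V$ is assumed differentiable at $y_{\ac}$ so that $\mathscr{E}_{\ac}$ is Gateaux differentiable there, stationarity gives $\<\delta\mathscr{E}_{\ac}(y_{\ac}),v_h\>=\<f,v_h\>_{\varepsilon}$ for every admissible variation $v_h\in\Us^{h}$. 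The force term on the right is recast in nodal form directly by \eqref{projected_force}, so the substance of the proposition is the computation and reorganisation of the left-hand side.

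First I would differentiate the three contributions in \eqref{eq: coupling energy integral} separately. The atomistic and interface terms produce $\varepsilon\sum_{\ell\in\As}\sum_{i}\partial_{i}V(Dy^h_\ell)\,D_i v^h_\ell$ and $\varepsilon\sum_{\ell\in\Is}\sum_{i}\partial_{i}V^{\i}_\ell(Dy^h_\ell)\,D_i v^h_\ell$, where $i$ ranges over $\{1,2,-1,-2\}$; the continuum integral, since $\nabla y^h$ and $\nabla v_h$ are elementwise constant, yields $\sum_{T}\omega_{T}\,W'(\nabla y^h|_T)\,\nabla v_h|_T$ and is thus already in elementwise form.

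The central step is a discrete summation by parts recasting the atomistic and interface contributions into a sum over elements. Here I would exploit the atomistic resolution guaranteed by (T1)--(T5): on every element $T_k$ inside $\Omega_{\a}\cup\Omega_{\i}$ and on the first continuum element on either side by (T5), the mesh size equals $\varepsilon$, so $\nabla v_h|_{T_k}=D_1 v^h_{\ell_{k-1}}=(v^h_{\ell_k}-v^h_{\ell_{k-1}})/\varepsilon$. Each finite difference $D_{\pm1}v^h_\ell$ and $D_{\pm2}v^h_\ell$ may then be rewritten as a signed combination of the elementwise gradients $\nabla v_h|_{T_j}$ on the one or two elements it spans. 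After collecting, for each $k$, all terms multiplying $\nabla v_h|_{T_k}$, the left-hand side acquires the form $\sum_{k\in\Ks^{\ac}}h_{T_k}\,\bar{\sigma}^{\ac}_{k}(y_h)\,\nabla v_h|_{T_k}$, with coefficient $\bar{\sigma}^{\ac}_{k}$ the elementwise stress recorded in the appendix. I expect the main obstacle to be the bookkeeping at the atomistic/interface/continuum junction: because the next-nearest-neighbour differences $D_{\pm2}$ couple an element to its neighbour, the terms feeding $\bar{\sigma}^{\ac}_{k}$ for $k$ near the interface originate from several adjacent sites and must be combined with the effective-volume corrections $\omega_{T}$ and $\omega^{\i}_{\ell}$ so that the result is a single scalar per element; verifying that these pieces assemble into a clean, local, elementwise stress is where the real work lies.

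Finally, for uniqueness I would observe that, as $v_h$ ranges over $\Us^{h}$, the gradient tuples $(\nabla v_h|_{T_k})_{k}$ range over exactly those vectors satisfying the single closure relation $\sum_{k}h_{T_k}\nabla v_h|_{T_k}=0$, a consequence of periodicity. Hence the identity determines $\bar{\sigma}^{\ac}_{k}(y_h)$ uniquely up to a common additive constant, which is pinned down by the explicit local formula constructed in the appendix; this yields the claimed unique elementwise coupling stress tensor.
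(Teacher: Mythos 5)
Your proposal is correct and follows essentially the route the paper itself relies on: the paper states this proposition without a written proof, and the content it presupposes is exactly the computation you describe --- stationarity of $E_{\ac}$ over the affine space $\Ys^h$, the nodal rewriting \eqref{projected_force} of the load, and the regrouping of the finite-difference terms (possible since, by (T1)--(T5), every difference appearing in the atomistic and interface sums spans only elements of length $\eps$) into the elementwise stresses recorded in \eqref{eq: stress tensor ac elem wise} of Appendix \ref{app: coupling model stress tensor}. Your closing observation, that the variational identity alone determines $\bar{\sigma}^{\ac}_{k}$ only up to a common additive constant (because $\sum_{k}h_{T_k}\nabla v_h|_{T_k}=0$ for periodic $v_h$) and that the representative is fixed by the explicit local formulas, is handled correctly and is in fact more careful than the paper's bare assertion of uniqueness.
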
Moreover, using the identity which is a consequence of the 1D setting of our problem
\begin{equation}
\label{eq: gradient identity}
\nabla v_h|_{T} = \frac{\varepsilon}{h_{T}}\sum_{\ell \in \Ls_T} Dv_{\ell},
\end{equation}
we have the equivalent form for the first variation of the coupling model associated with the lattice
\begin{equation}
\label{eq: variational equiv}
\<\delta\mathscr{E}_{\ac}(y_h),v_h\> =\sum_{k \in \Ks^{ac}}h_{T_k}\bar{\sigma}^{\ac}_{k}(y_h) \nabla v_h|_{T_k} = \varepsilon\sum_{\ell\in\mathscr{L}}\sigma^{\ac}_{\ell}(y_h)Dv^h_{\ell}  \ \forall v_h\in\Us^h,
\end{equation}
where
\begin{align}
\label{eq: stress tensor ac atom wise}
\sigma^{\ac}_{\ell}(y_h)=
\left\{
\begin{array}{l l}
\bar{\sigma}^{\ac}_{k}(y_h),   &k \in \Ks^a \cup \Ks^i \cup \{K_2 + 2\} \text{ and } \ell = \ell_k,\\
W'(\nabla y_{h}|_{T_{k}}),   &k \in\Ks^c \setminus \{K_{2}+2\} \text{ and } \ell\in \Ls_{T_{k}}. \\
\end{array} \right.
\end{align}

\begin{remark}
We do not approximate the external energy by a quadrature rule to avoid substantial technical difficulty for the analysis of the efficiency. However, We note that the use of a quadrature rule (for example the trapezium rule where $\<f, y_h\>_\eps$ is approximated by $\sum_{k \in \Ks^{ac}} f_{\ell_k} y^h_{\ell_k}$) has only marginal effect in the error estimates which is negligible in computation. We refer to Section 3.3 and 3.4 of \cite{CO_HW_ACC_1D_A_Posteriori} for a thorough discussion.
\end{remark}

\subsection{Notation and Assumptions}
\label{Sec: Notation}
Before we give the detailed analysis, we fix some notation and list the assumptions that will be commonly used in the rest of the paper. Further notation will be defined as the analysis proceeds.

\subsubsection{Notation for lattice functions} Let $\mathcal{D}$ be a subset of $\Z$. For a vector $v \in \R^\Z$, we define
\begin{align*}
\Vert v \Vert_{\ell^{p}_{\eps}(\mathcal{D})} :=  \left\{
\begin{array}{l l}
\Big(\sum_{\ell \in \mathcal{D}} \eps |v_\ell|^p
\Big)^{1/p},  & 1 \le p < \infty,\\
\max_{\ell \in \mathcal{D}} |v_\ell|,  & p = \infty.
\end{array} \right.
\end{align*}
If the label $\mathcal{D}$ is omitted, we understand this to mean
$\mathcal{D} = \{-N+1, \dots, N\}$.

We define the first order discrete derivatives $v_\ell' := (v_\ell -
v_{\ell-1}) / \eps$ for $v \in \Us^{\eps}$and equip the space $\Us^\eps$ with the discrete Sobolev norm
\begin{displaymath}
  \|v\|_{\Us^{1,2}} := \Vert v' \Vert _{\ell^p_\eps} \quad \text{ for }
  v \in \Us^\eps.
\end{displaymath}
The norm on the dual space $(\Us^\eps)^*$ is defined by
\begin{displaymath}
  \| T \|_{\Us^{-1,2}} := \sup_{\substack{v \in \Us^\eps \\ \| v
    \|_{\Us^{1,2}}=1 }} T[v].
\end{displaymath}

\subsubsection{Sets of indices of nodes and elements} We define the set which contains the indices of the elements in the continuum region that are not adjacent to the interface region as
\begin{align}
\Ks^c_{\Ts_h} : = \Ks^c \setminus \{K_2 + 2\};
\end{align}
the set which contains the indices of elements in the continuum region that are 'further inside' the continuum region as
\begin{align}
\mathring{\Ks}^c_{\Ts_h}  := \Ks^c_{\Ts_h}  \setminus \{K_1-2, K_2+3\};
\end{align}
and the set which contains the indices of the nodes that are not adjacent to the interface nodes as
\begin{align}
\mathring{\Ks}^c : = \Ks^c \setminus \{K_1-2, K_2+2\}.
\end{align}

\subsubsection{Assumptions on the interaction potential}
We make the following assumptions on the second order partial derivatives of the interaction potential $V$. Such assumptions play important roles in proving the equivalence of the error estimator based on residuals and that based on gradient recovery and they essentially reflect the feature of nearest neighbour dominating.

Let $E \subset \R$ be the closed interval such that
\begin{equation}
\inf_{k \in \Ks^c \cup \Ks^i \cup \{ K_1+1\}} \nabla y_{\ac}|_{T_k} = \inf E \text{ and } \sup_{k \in \Ks^c \cup \Ks^i \cup \{ K_1+1\}} \nabla y_{\ac}|_{T_k} = \sup E.
\end{equation}
Let $\gb := (g_1, g_2, g_3, g_4)$ for $g_i \in \R$ and $E^{\times 4} : = E \times 2E \times -E \times -2E$. Upon defining 
\begin{align}
\Ss^{\NNN_1}: =& \{\pm(1,-1),\pm(2,2)\}, \label{def: sets of interactions 1} \\
 \Ss^{\NNN_2}: =& \{\pm(1,2),\pm(2,1),\pm(-2,1),\pm(-1,2)\}, \label{def: sets of interactions 2} \\ 
\Ss^{\NNN_3}:=& \{\pm(2,-2) \} \label{def: sets of interactions 3},
\end{align} 
we make the following assumption which is tested a posteriorily  for some typical potentials in Section \ref{Sec: numerical results} and illustrated in Figure \ref{Fig:derivative_ratios}.

\begin{assumption}
\label{ass: assump_on_derivs_V}
The second order derivatives of $V$ satisfy
\begin{align}
\sup_{\substack{\gb \in E^{\times 4},\\  (i,j) \in \Ss^{\NNN_1} }} |\partial_{ij}V(\gb)| \le \frac{1}{50} \inf_{\substack{\gb \in E^{\times 4}, \\ i = \pm1}} |\partial_{ii}V(\gb)|:= & \frac{1}{50} m_2^{\NN} \nonumber \\
\le & \frac{1}{50} \sup_{\substack{\gb \in E^{\times 4}, \\i = \pm1}} |\partial_{ii}V(\gb)|:= \frac{1}{50} M_2^{NN},
\label{eq: assump_on_derivs_V_1}
\end{align}
\begin{align}
\sup_{\substack{ \gb \in E^{\times 4}, \\ (i,j) \in \Ss^{\NNN_2}} } |\partial_{ij}V(\gb)|
\le &
\frac{1}{10}\inf_{\substack{ \gb \in E^{\times 4},\\ (i,j) \in \Ss^{\NNN_1} }} |\partial_{ii}V(\gb)|, \notag  \\
\text{and~}\sup_{\substack{ \gb \in E^{\times 4}, \\ (i,j) \in \Ss^{\NNN_3}} } |\partial_{ij}V(\gb)|
\le &
\frac{1}{10} \inf_{\substack{ \gb \in E^{\times 4}, \\ (i,j) \in \Ss^{\NNN_2}} } |\partial_{ij}V(\gb)|.
\label{eq: assump_on_derivs_V_2}
\end{align}
We also assume that
\begin{equation}
\partial_{ii}V(\gb) > 0, i = \pm1, \text{~and~} \partial_{jj}V(\gb) < 0, j=\pm2~ \forall  \gb\in E^{\times 4}.
\label{eq: assump_on_derivs_V_3}
\end{equation}
\end{assumption}


\section{Residual based error estimator}
\label{Sec: Residual Based EE}
In this section, we will derive the residual based a posteriori error estimator for the GRAC method for the many-body next-nearest-neighbour system. Such error estimator has been derived for the QNL and ACC method for pair potential systems in \cite{CO_QNL1D_2011} and  \cite{CO_HW_ACC_1D_A_Posteriori} respectively. Though the analysis is similar, we nevertheless include it here for the completeness and will quote related results in the previous works when necessary.

We first insert the QC solution $y_{\ac}$ into the weak formulation of the atomistic problem to obtain the residual. Using the identity \eqref{QC_firstV} and letting $v_h := I_h v \in \Us^h$ to be the pointwise interpolant of $v \in \Us^\eps$ such that $v^h_{\ell_k} = v_{\ell_k}$, we obtain the \emph{residual operator} $R \in (\Us^\eps)^*$ such that
\begin{align}
R[v]:=&\<\delta\mathscr{E}_{\a}(y_{\ac}),v\> - \<f,v\>_{\varepsilon}\notag\\
=&\big[ \<\delta\mathscr{E}_{\a}(y_{\ac}),v\> - \<f,v\> \big] - \big[ \<\delta\mathscr{E}_{\ac}(y_{\ac}),v_h\> - \<f,v_h\> \big]\notag \\
=&\big[\<\delta\mathscr{E}_{\a}(y_{\ac}),v\> - \<\delta\mathscr{E}_{\ac}(y_{\ac}),v\>\big]\notag\\
&+\big[\<\delta\mathscr{E}_{\ac}(y_{\ac}),v\>-\<\delta\mathscr{E}_{\ac}(y_{\ac}),v_h\>+\<f,v\>_{\varepsilon}-\<f,v_h\>_{\varepsilon}\big], \notag\\
=&: R_{\mo}[v] + R_{\cg}[v],
\end{align}
where we separate the residual operator into $R_{mo}$ and $R_{cg}$ which correspond to the model residual and the corse-graining residual respectively. We then estimate $R_{\mo}$ and $R_{\cg}$ separately.

\subsection{Model Residual}
\label{Sec: 2.1}
We begin our analysis for the model residual by defining
\begin{equation}
\label{def: node wise residual R}
R^{\mo}_{\ell}:=\sigma^{\a}_{\ell}(y_{\ac})-\sigma^{\ac}_{\ell}(y_{\ac}),
\end{equation}
which corresponds to the discrepancy of the stress tensors of different models. We estimate the model residual in the following theorem.

\begin{theorem}
Let $y_h \in \Ys^h$ such that $\min_k \nabla y_h|_{T_k} >0$. With the assumption that the size of the element whose index is in $\Ks^c_{\Ts_h}$ to be larger than or equal to $6\eps$, which is purely for the sake of the simplicity of presentation, the model residual is estimated by
\begin{equation}
\label{eq: model err est}
\|R_{\mo}\|_{\Us^{-1,2}} \le \Big\{\sum_{k\in\Ks^c}(\eta^{\mo}_{{k}})^{2}\Big\}^{\frac{1}{2}} \equiv \Big\{\sum_{k\in\Ks^c_{\Ts_h}}(\eta^{\mo}_{T_{k}})^{2}\Big\}^{\frac{1}{2}} =: \eta^{\mo},
\end{equation}
where the \emph{nodewise} upper bound of the model residual is given by
\begin{align}
\label{eq: node wise err est}
\eta^{\mo}_{{k}}=
\left\{
\begin{array}{l l}
\Big\{ \eps \sum_{\ell=\ell_{k}-2}^{\ell_{k}+3}\big(R^{\mo}_{\ell}\big)^{2}\Big\}^{\frac{1}{2}},&k \in \mathring{\Ks}^c, \\
\Big\{ \eps \sum_{\ell=\ell_{k}-2}^{\ell_{k}+4}(R^{\mo}_{\ell})^{2} \Big\}^{\frac{1}{2}},&k=K_{1}-2,\\
\Big\{\eps \sum_{\ell=\ell_{k}-3}^{\ell_{k}+3}(R^{\mo}_{\ell})^{2}  \Big\}^{\frac{1}{2}},&k=K_{2}+2,\\
 \end{array} \right.
\end{align}
and the \emph{elementwise} upper bound of the model residual is given by
\begin{align}
\label{eq: ele wise err est}
\eta^{\mo}_{T_{k}}=
\left\{
\begin{array}{l l}
\Bigg\{ \frac{1}{2}\eps \Big[\sum_{\ell=\ell_{k-1}-2}^{\ell_{k-1}+3}(R^{\mo}_{\ell})^{2}+\sum_{\ell=\ell_{k}-2}^{\ell_{k}+3}(R^{\mo}_{\ell})^{2}\Big] \Bigg\}^{\frac{1}{2}}, &k\in \mathring{\Ks}^c_{\Ts_h} ,\\
\Bigg\{ \frac{1}{2} \eps \Big[\sum_{\ell=\ell_{k-1}-2}^{\ell_{k-1}+3}(R^{\mo}_{\ell})^{2}\Big]
+ (\eta^{\mo}_k)^2 \Bigg\}^{\frac{1}{2}} ,&k=K_{1}-2,\\
\Bigg\{ (\eta^{\mo}_{k-1})^2+\frac{1}{2} \eps \Big[ \sum_{\ell=\ell_{k}-2}^{\ell_{k}+3}(R^{\mo}_{\ell})^{2}\Big] \Bigg\}^{\frac{1}{2}}, &k=K_{2}+3.
\end{array} \right.
\end{align}

\end{theorem}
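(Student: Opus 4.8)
The plan is to reduce everything to the single-lattice representation $R_{\mo}[v] = \varepsilon\sum_{\ell\in\Ls} R^{\mo}_\ell\, v'_\ell$ and then bound it by Cauchy--Schwarz, so that the entire content of the theorem becomes a bookkeeping statement about where $R^{\mo}_\ell$ is nonzero.

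First I would establish this representation. By \eqref{A_firstV} the atomistic term is $\<\delta\mathscr{E}_{\a}(y_{\ac}),v\> = \varepsilon\sum_\ell \sigma^{\a}_\ell(y_{\ac})v'_\ell$, and by the atom-wise stress form \eqref{eq: variational equiv}--\eqref{eq: stress tensor ac atom wise} the coupling term is $\varepsilon\sum_\ell \sigma^{\ac}_\ell(y_{\ac})v'_\ell$; subtracting gives $R_{\mo}[v] = \varepsilon\sum_\ell R^{\mo}_\ell v'_\ell$ with $R^{\mo}_\ell$ as in \eqref{def: node wise residual R}. Cauchy--Schwarz together with $\|v\|_{\Us^{1,2}} = \|v'\|_{\ell^2_\eps}$ then yields $\|R_{\mo}\|_{\Us^{-1,2}} \le \|R^{\mo}\|_{\ell^2_\eps} = (\varepsilon\sum_\ell (R^{\mo}_\ell)^2)^{1/2}$, and it remains only to reorganize $\varepsilon\sum_\ell (R^{\mo}_\ell)^2$ into the claimed node and element sums.

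The crucial step is to locate the support of $\ell \mapsto R^{\mo}_\ell$. For an atom $\ell$ lying deep inside a continuum element $T_k$, the discrete gradients entering $\sigma^{\a}_\ell$ (which has a finite stencil, spanning $y_{\ell-4},\dots,y_{\ell+3}$ via \eqref{eq: Q^a_l}) all take the single affine value $\nabla y_h|_{T_k}$; the Cauchy--Born identity $W'(F) = \partial_1 V + 2\partial_2 V - \partial_{-1}V - 2\partial_{-2}V$ evaluated at this uniform state equals $\sigma^{\a}_\ell$, while \eqref{eq: stress tensor ac atom wise} gives $\sigma^{\ac}_\ell = W'(\nabla y_h|_{T_k})$, so $R^{\mo}_\ell = 0$. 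Likewise for $\ell$ deep inside $\Omega_{\a}$ the coupling model is exactly atomistic, $\sigma^{\ac}_\ell = \sigma^{\a}_\ell$, whence $R^{\mo}_\ell=0$. Consequently $R^{\mo}_\ell$ can be nonzero only in a bounded window of atoms around each continuum node $\ell_k$, where the gradient jumps, together with the two interface patches. I would then verify that the windows prescribed in \eqref{eq: node wise err est} --- $[\ell_k-2,\ell_k+3]$ for interior continuum nodes, and the wider, asymmetric windows $[\ell_k-2,\ell_k+4]$ at $k=K_1-2$ and $[\ell_k-3,\ell_k+3]$ at $k=K_2+2$ --- contain the entire support. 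Because every continuum element indexed in $\Ks^c_{\Ts_h}$ has length at least $6\eps$, consecutive windows are disjoint, whence $\varepsilon\sum_\ell (R^{\mo}_\ell)^2 = \sum_{k\in\Ks^c}(\eta^{\mo}_k)^2$, giving the nodewise bound. The elementwise statement and the equality $\equiv$ then follow by splitting each nodal contribution $(\eta^{\mo}_k)^2$ equally between the two elements sharing $\ell_k$ (the source of the factor $\tfrac12$ in \eqref{eq: ele wise err est}) and re-summing over elements.

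I expect the main obstacle to be the interface bookkeeping that fixes the two exceptional windows. Away from the interface the stencil of $\sigma^{\a}_\ell$ is symmetric and the window $[\ell_k-2,\ell_k+3]$ is routine, but near the coupling interface one must track precisely how the stencils of the atomistic stress and of the reconstructed interface stress $\bar\sigma^{\ac}_k$ (Appendix \ref{app: coupling model stress tensor}) overlap the $\eps$-sized elements $T_{K_1-1}$ and $T_{K_2+2}$ guaranteed by (T5); this is what produces the asymmetry between $[\ell_k-2,\ell_k+4]$ and $[\ell_k-3,\ell_k+3]$ and the reason the whole model residual can be charged to continuum nodes alone. Verifying that these windows both cover the support and remain disjoint from the neighbouring interior windows is the delicate part of the argument.
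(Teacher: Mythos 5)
Your proposal is correct and follows essentially the same route as the paper's proof: write $R_{\mo}[v]=\varepsilon\sum_{\ell\in\Ls}R^{\mo}_{\ell}v'_{\ell}$ via \eqref{A_firstV} and \eqref{eq: variational equiv}, observe that $R^{\mo}_{\ell}$ vanishes deep inside continuum elements (by the Cauchy--Born identity) and in the central atomistic region, apply Cauchy--Schwarz, and regroup the surviving windows around the continuum nodes and the two interface patches into the nodewise and elementwise quantities. The only cosmetic difference is order of operations (you apply Cauchy--Schwarz globally and then reorganize the sum, while the paper splits into support windows first), and your identification of the interface windows $[\ell_k-2,\ell_k+4]$ at $k=K_1-2$ and $[\ell_k-3,\ell_k+3]$ at $k=K_2+2$, and of the factor $\tfrac12$ as an equal split of nodal contributions between adjacent elements, matches the paper exactly.
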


\begin{remark}
\begin{enumerate}
\item $\eta^{\mo}$ is a reminiscent of the flux (or stress) jump terms that occur in the classical residual based error estimator for elliptic equations, but has a different origin: it results from the model approximation rather than just the finite element discretization.
\item $\eta^{\mo}_k$'s are often used in the analysis, whereas $\eta^{\mo}_{T_{k}}$'s are used in computation for the adaptivity of the mesh. We also note that the residual on the interface is included in the second element in the continuum region as a result of our mesh structure and not being able to further refine the elements whose sizes are equal to the lattice spacing $\eps$.
\end{enumerate}
\end{remark}

\begin{proof}
By \ref{A_firstV} and \ref{eq: variational equiv}, we have:
\begin{equation}
R_{\mo}[v]=\varepsilon\sum_{\ell\in\Ls}(\sigma^{\a}_{\ell}(y_{\ac})-\sigma^{\ac}_{\ell}(y_{\ac}))v_{\ell}'= \varepsilon\sum_{\ell\in\Ls}R^{\mo}_{\ell}v_{\ell}'.
\end{equation}

We notice that $R^{\mo}_\ell \equiv 0$ in the 'central atomistic region' when $\ell = \ell_{K_1}+3, \ldots, \ell _{K_2} - 2$ and inside each finite element $T_{k}$ when $\ell \in \Ls_{T_{k}} \setminus \{\ell_{k-1}+1,\ell_{k-1}+2,\ell_{k-1}+3\}\cup\{\ell_{k}-2,\ell_{k}-1,\ell_{k}\}$. Therefore, by Cauchy-Schwarz inequality, we have
\begin{align}
R_{\mo}[v]=& \sum_{k\in \mathring{\Ks}^c } \eps \sum_{\ell=\ell_{k}-2}^{\ell_{k}+3} R^{\mo}_{\ell}v_{\ell}' + \eps \sum_{\ell=\ell_{K_{1}-2}-2}^{\ell_{K_{1}+2}}R^{\mo}_{\ell}v_{\ell}' + \varepsilon\sum_{\ell=\ell_{K_{2}-1}}^{\ell_{K_{2}+2}+3}R^{\mo}_{\ell}v_{\ell}' \nonumber  \\
\le &\Big\{ \sum_{k\in\mathring{\Ks}^c} \eps \sum_{\ell=\ell_{k}-2}^{\ell_{k}+3} (R^{\mo}_{\ell})^2
+  \eps\sum_{\ell=\ell_{K_1-2}-2}^{\ell_{K_1}+2} (R^{\mo}_{\ell})^{2} +  \eps \sum_{\ell=\ell_{K_{2}-1}}^{\ell_{K_{2}+2}+3}(R^{\mo}_{\ell})^{2}  \Big\}^{\frac{1}{2}} \|v'\|_{\ell_{\varepsilon}^{2}}.
\end{align}

Regrouping the residuals with respect to nodes and elements, we obtain the stated results.
\end{proof}

\subsection{Coarse-graining Residual}
We then consider the coarse-graining residual
\begin{equation}
R_{\cg}[v]=\<\delta\mathscr{E}_{\ac}(y_{\ac}),v\>-\<\delta\mathscr{E}_{\ac}(y_{\ac}),v_h\>+\<f,v\>_{\varepsilon}-\<f,v_h\>_{\varepsilon},
\end{equation}
whose estimate is given in the following theorem.
\begin{theorem}
Let $y_h \in \Ys^h$ such that $\min_k \nabla y_h|_{T_k} >0$; then
\begin{align}
\|R_{\cg}\|_{\Us^{-1,2}} \le& \big\{ \sum_{k\in\Ks^c_{\Ts_h}} (\eta^{\cg}_{T_k})^2  + \frac{1}{2}\sum_{k\in\Ks^c_{\Ts_h}}  h_{T_k}^2(\|f - \bar{f}_{T_k} \|_{\ell_{\varepsilon}^{2}(\Ls_{T_{k}})})^2 \big\}^{\frac{1}{2}}\notag\\
= & \big\{ \sum_{k\in\Ks^c} (\eta^{\cg}_{k})^2  + \frac{1}{2}\sum_{k\in\Ks^c_{\Ts_h}}  h_{T_k}^2(\|f - \bar{f}_{T_k} \|_{\ell_{\varepsilon}^{2}(\Ls_{T_{k}})})^2 \big\}^{\frac{1}{2}}
\label{eq: cg residual}
\end{align}
where $\bar{f}_{T_{k}} \in \R$ is a certain average of $f$ on $T_k$ and
\begin{align}
(\eta^{\cg}_{T_k})^2:=(\frac{1}{\sqrt{2}}h_{T_k}\|\bar{f}_{T_k} \|_{\ell_{\varepsilon}^{2}(\Ls_{T_{k}})})^2 =  \frac{1}{2}h_{T_k}^2 \eps \sum_{\ell \in \Ls_{T_k}} (\bar{f}_{T_{k}})^2,\notag\\
\label{eq: cg residual element}
\end{align}
and
\begin{align}
\eta^{\cg}_{k}=
\left\{
\begin{array}{l l}
\frac{\sqrt{2}}{2}\sqrt{(\eta^{\cg}_{T_k})^{2} + (\eta^{\cg}_{T_{k+1}})^{2}}, & k\in \mathring{\Ks}^c,\\
\frac{\sqrt{2}}{2}\eta^{\cg}_{T_{K_{1}-2}}, & k = K_{1}-2, \\
\frac{\sqrt{2}}{2}\eta^{\cg}_{T_{K_{2}+3}}, & k = K_{2}+2.
\end{array} \right.
\label{eq: cg residual node}
\end{align}
We also define $(\eta^{\cg})^2 := \sum_{k\in\Ks^c_{\Ts_h}} (\eta^{\cg}_{T_k})^2 = \sum_{k\in\Ks^c} (\eta^{\cg}_{k})^2$ for later usage.
\end{theorem}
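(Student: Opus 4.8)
The plan is to show first that the stress part of $R_{\cg}$ vanishes identically, so that the coarse-graining residual reduces to a pure data-oscillation term in the external force $f$, and then to bound that term element by element. Reading the functional $\<\delta\mathscr{E}_{\ac}(y_{\ac}),\cdot\>$ on all of $\Us^\eps$ through the atom-wise stress representation \eqref{eq: variational equiv}--\eqref{eq: stress tensor ac atom wise}, I write $\<\delta\mathscr{E}_{\ac}(y_{\ac}),v\>-\<\delta\mathscr{E}_{\ac}(y_{\ac}),v_h\>=\eps\sum_{\ell\in\Ls}\sigma^{\ac}_\ell(y_{\ac})(v'_\ell-(v_h)'_\ell)$, where $w:=v-v_h$ and $v_h=I_hv$. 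Since $v_h$ interpolates $v$ at every node and the mesh is atomistically resolved on $\Omega_\a\cup\Omega_\i$ and on the two length-$\eps$ continuum elements adjacent to the interface (indices $K_1-1$ and $K_2+2$, by (T5)), $w$ vanishes there; on every remaining continuum element $T_k$ with $k\in\Ks^c_{\Ts_h}$ the stress is the constant $W'(\nabla y_h|_{T_k})$, so the telescoping identity $\eps\sum_{\ell\in\Ls_{T_k}}(v'_\ell-(v_h)'_\ell)=w_{\ell_k}-w_{\ell_{k-1}}=0$ kills the contribution. Hence the stress part is zero and, by \eqref{projected_force}, $R_{\cg}[v]=\<f,v-v_h\>_\eps=\sum_{k\in\Ks^c_{\Ts_h}}\eps\sum_{\ell\in\Ls_{T_k}}f_\ell w_\ell$.

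Next I estimate each local term. I take $\bar f_{T_k}$ to be the arithmetic (equivalently, $\ell^2_\eps(\Ls_{T_k})$-orthogonal) mean of $f$ over $\Ls_{T_k}$, so that $f-\bar f_{T_k}$ is orthogonal to the constant $\bar f_{T_k}$ and Pythagoras gives $\|f\|^2_{\ell^2_\eps(\Ls_{T_k})}=\|\bar f_{T_k}\|^2_{\ell^2_\eps(\Ls_{T_k})}+\|f-\bar f_{T_k}\|^2_{\ell^2_\eps(\Ls_{T_k})}$. Cauchy--Schwarz then yields $|\eps\sum_\ell f_\ell w_\ell|\le(\|\bar f_{T_k}\|^2+\|f-\bar f_{T_k}\|^2)^{1/2}\,\|w\|_{\ell^2_\eps(\Ls_{T_k})}$, all norms being over $\Ls_{T_k}$. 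Because $w$ vanishes at both endpoints of $T_k$ and $w'=v'-\nabla v_h|_{T_k}$ is the mean-zero part of $v'$, a discrete Poincaré inequality on the element (with the clean, possibly non-sharp, constant $h_{T_k}/\sqrt2$) gives $\|w\|_{\ell^2_\eps(\Ls_{T_k})}\le\frac{h_{T_k}}{\sqrt2}\|w'\|_{\ell^2_\eps(\Ls_{T_k})}\le\frac{h_{T_k}}{\sqrt2}\|v'\|_{\ell^2_\eps(\Ls_{T_k})}$. Since $\frac1{\sqrt2}h_{T_k}\|\bar f_{T_k}\|_{\ell^2_\eps(\Ls_{T_k})}=\eta^{\cg}_{T_k}$, this is precisely $|\eps\sum_\ell f_\ell w_\ell|\le\{(\eta^{\cg}_{T_k})^2+\frac12 h_{T_k}^2\|f-\bar f_{T_k}\|^2_{\ell^2_\eps(\Ls_{T_k})}\}^{1/2}\|v'\|_{\ell^2_\eps(\Ls_{T_k})}$, and it is exactly the orthogonal (mean) choice of $\bar f_{T_k}$ that produces the additive $a^2+b^2$ structure rather than $(a+b)^2$.

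Summing over $k\in\Ks^c_{\Ts_h}$ and applying Cauchy--Schwarz in $k$, together with $\sum_k\|v'\|^2_{\ell^2_\eps(\Ls_{T_k})}\le\|v'\|^2_{\ell^2_\eps}=\|v\|^2_{\Us^{1,2}}$, gives the first bound after dividing by $\|v\|_{\Us^{1,2}}$ and taking the supremum over $v\in\Us^\eps$. The second equality is pure bookkeeping: the definitions \eqref{eq: cg residual node} distribute each $(\eta^{\cg}_{T_k})^2$ with weight $\tfrac12$ onto its two neighbouring nodes (with one-sided weights at the boundary nodes $k=K_1-2$ and $k=K_2+2$), and summing these nodal quantities reproduces $\sum_{k\in\Ks^c_{\Ts_h}}(\eta^{\cg}_{T_k})^2$, so that $(\eta^{\cg})^2$ is well defined independently of the grouping.

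I expect the main obstacle to be the interface bookkeeping rather than the analytic estimates: one must verify carefully that $w$ genuinely vanishes on every atomistically resolved cell near the interface -- in particular on the length-$\eps$ elements carrying the reconstructed stresses $\bar\sigma^{\ac}_{K_1-1}$ and $\bar\sigma^{\ac}_{K_2+2}$ -- so that the stress part collapses completely, and then pin down the constant $\tfrac1{\sqrt2}$ in the discrete Poincaré estimate together with the matching one-sided regrouping weights at $K_1-2$ and $K_2+2$. These are exactly the places where the one-dimensional telescoping structure and the mesh assumptions (T1)--(T5) are essential.
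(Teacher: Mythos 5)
Your proof is correct, and its skeleton is the same as the paper's: the stress contribution to $R_{\cg}$ cancels because $\sigma^{\ac}_\ell(y_{\ac})$ is elementwise constant and $v_h=I_hv$ matches $v$ at every node (the paper invokes the identity \eqref{eq: variational equiv} for this; your telescoping argument, including the check on the length-$\eps$ elements $T_{K_1-1}$ and $T_{K_2+2}$, is exactly what that identity encodes), after which the force term is bounded elementwise by Cauchy--Schwarz plus a discrete Poincar\'e inequality and summed, and the nodewise/elementwise identity for $(\eta^{\cg})^2$ is the weight-$\frac12$ redistribution you describe. The one real difference is where the factor $\sqrt2$ is paid. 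The paper keeps $\bar f_{T_k}\in\R$ completely arbitrary, uses the Poincar\'e constant $\frac12 h_{T_k}$ cited from \cite{CO_ES_1D_QC_2008}, and then splits $\|f\|_{\ell^2_\eps(\Ls_{T_k})}\le\|\bar f_{T_k}\|_{\ell^2_\eps(\Ls_{T_k})}+\|f-\bar f_{T_k}\|_{\ell^2_\eps(\Ls_{T_k})}$ followed by the arithmetic-mean inequality $(a+b)^2\le 2a^2+2b^2$; you instead commit to the $\ell^2_\eps$-orthogonal (arithmetic) mean, get the additive $a^2+b^2$ structure from Pythagoras with no loss, and absorb the $\sqrt2$ into the weaker Poincar\'e constant $h_{T_k}/\sqrt2$ (which is legitimate, being implied by the cited $\frac12 h_{T_k}$ bound). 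Both routes yield the identical final estimate. The caveat is generality: the theorem deliberately leaves $\bar f_{T_k}$ as ``a certain average,'' and the choice the paper actually makes later (Section \ref{Sec: Numeric Exp} and Appendix \ref{sec: higher order of the data oscillation}) is a signed root-mean-square average, for which $f-\bar f_{T_k}$ is \emph{not} $\ell^2_\eps$-orthogonal to constants, so your Pythagoras step does not apply verbatim to that choice. If you want the estimate for arbitrary $\bar f_{T_k}$, which is what the later sections rely on, swap Pythagoras for the triangle-plus-arithmetic-mean inequality and use the sharper constant $\frac12 h_{T_k}$; nothing else in your argument changes.
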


\begin{proof}
By the identity in \eqref{eq: variational equiv} we have
\begin{equation}
\<\delta\mathscr{E}_{\ac}(y_{\ac}),v\>=\<\delta\mathscr{E}_{\ac}(y_{\ac}),v_h\>.
\end{equation}
We thus only have to analyze the coarse-graining residual of the external force
\begin{align}
R_{\cg}[v]= \varepsilon \sum_{\ell\in\Cs}f_{\ell}(v_{\ell}-v^{h}_{\ell})
\leq& \sum_{k\in\Ks^c_{\Ts_h}}\big\{\varepsilon\sum_{\ell\in\Ls_{T_{k}}}f_{\ell}^{2}\big\}^{\frac{1}{2}}\big\{\varepsilon\sum_{\ell\in\Ls_{T_{k}}}(v_{\ell}-v^h_{\ell})^{2}\big\}^{\frac{1}{2}} \nonumber \\
\le &\bigg\{\sum_{\Ks^c_{\Ts_h}}(\frac{1}{2}h_{T_k}\|f\|_{\ell_{\varepsilon}^{2}(\Ls_{T_{k}})})^2 \bigg\}^{\frac{1}{2}}\|v'\|_{\ell_{\varepsilon}^{2}},
\end{align}
where the discrete Poincar\'{e} inequality $\|v-I_{\varepsilon}v_{h}\|_{\ell_{\varepsilon}^{2}(\Ls_{T_{k}})}\leq\frac{1}{2}h_{T_k}\|v'\|_{\ell_{\varepsilon}^{2}(\Ls_{T_{k}})}$ has been applied (c.f. \cite{CO_ES_1D_QC_2008}). Upon introducing $\bar{f}_{T_k}$ and applying the triangle inequality and the inequality of arithmetic means, we obtain the stated result.
\end{proof}

\begin{remark}
We postpone our choice of $\bar{f}_{T_k}$ to Section \ref{Sec: Numeric Exp} so that it adapts to the external load we apply in our numerical experiments to make the \emph{data oscillation} $h_{T_k}\|f - \bar{f}_{T_k} \|_{\ell_{\varepsilon}(\Ls_{T_{k}})}$ be a higher order term compare with $\eta_k$ which will be proved in Appendix \ref{sec: higher order of the data oscillation} (see also Remark 1.7 of \cite{VerfurthAPosteriori}).
\end{remark}

\subsection{Stability and error estimate}
We need an a posteriori stability condition to give the residual based a posteriori error estimator. However, such condition has been derived and discussed in depth in \cite{CO_HW_ACC_1D_A_Posteriori, JH_XL_HW_a_Post_Contr_Mo_Adap_17, HW_ML_PL_LZ_2D_A_Post} whose detailed formulation is of little relevance to the problem we consider. Therefore, here we just assume there exists an a posteriori stability constant $c_{\a}$ which depends on the computed solution $y_h$ such that
\begin{align}
c_{\a}(y_h) \| y'_{\a} - y'_{\ac}\|^2_{\ell_{\varepsilon}^{2}} \le &\<\delta\mathscr{E}_{\a}(y_{\ac}),y_{\ac} - y_{\a}\> -  \<\delta\mathscr{E}_{\a}(y_{\a}),y_{\ac} - y_{\a}\> \nonumber \\
=  & \<\delta\mathscr{E}_{\a}(y_{\ac}),y_{\ac}  - y_{\a}\> - \<f,y_{\ac}  - y_{\a}\>_{\eps} \nonumber \\
=  & R[y_{\ac}  - y_{\a}].
\end{align}
Consequently, we have the a posteriori error estimate
\begin{align}
 \label{eq: total error est}
\|y'_{\a} -  y'_{\ac}\|_{\ell_{\varepsilon}^{2}}\le & \frac{1}{c_{\a}(y_h)}\big\{(\eta^{\mo})^2 + (\eta^{\cg})^2+\frac{1}{2}\sum_{k\in\Ks^c_{\Ts_h}} h_{T_k}^2(\|f - \bar{f}_{T_k} \|_{\ell_{\varepsilon}^{2}(\Ls_{T_{k}})})^2 \big]\big\}^{\frac{1}{2}}.
\end{align}



\section{Efficiency of The Residual Based Error Estimator}
\label{Sec: Efficiency}
In this section, we will show the residual based error estimator, up to a constant and data oscillation, provides a lower bound for the true error locally.

\subsection{Efficiency of the coarse-graining residual}
\label{Sec: Eff_EF}
We begin with the efficiency of the coarse-graining residual. The analysis closely follows that for the efficiency of the residual based error estimator for Poisson equation (c.f. \cite[Chapter 1.2]{VerfurthAPosteriori}). However, we need to make certain modifications and assumptions due to the discrete and the nonlocal features of our problem.

We first consider the elements whose sizes are greater than or equal to $8\varepsilon$. We define the \emph{discrete element bubble function} $b^{T_k} \in \Us^{\eps}$ (c.f. Chapter 1.1 in \cite{VerfurthAPosteriori}) by
\begin{align}
\label{def: element bubble function}
b^{T_{k}}_\ell:=
\left\{
\begin{array}{l l}
4\lambda^{T_{k},1}_\ell \lambda^{T_{k},2}_\ell, &\ell\in \{\ell_{k-1}+4,\ldots, \ell_{k}-3\},\\
0, &otherwise,
\end{array} \right.
\end{align}
where $\lambda^{T_{k},1}_\ell=\frac{\ell_{k}-3-\ell}{\ell_{k}-\ell_{k-1}-6}$ and $\lambda^{T_{k},2}_\ell=\frac{\ell-\ell_{k-1}-3}{\ell_{k}- \ell_{k-1}-6}$. We note that the support of $b^{T_k}$ is only on 'shrunken' $T_k$ that contains the set of atoms
\begin{equation}
\mrLs_{T_k}:=\{\ell_{k-1}+4,\ldots, \ell_{k}-3\},
\end{equation}
and such retraction of the bubble function guarantees the efficiency estimate holds precisely in $T_k$ which will commented after the proof of \ref{thm: local efficiency of the cg residual}. We first introduce the properties of $b^{T_k}$ whose proofs are given in Appendix \ref{Proof Pro}.
\begin{proposition}
\label{thm: property of elem bubble func}
The following estimates hold for be the discrete element bubble function $b^{T_k}$ defined in \eqref{def: element bubble function} :
 \begin{gather}
 |b^{T_k}_\ell | \leq 1, \forall \ell \in \mathring{\Ls}_{T_k}, \label{Pro_1_1}\\
 \|(b^{T_k})'\|_{\ell_{\varepsilon}^{2}(\mrLs_{T_k})} = C_1\sqrt{1-(\frac{\eps}{\mathring{h}_{T_k}})^2} (\mathring{h}_{T_k})^{-\frac{1}{2}} \le C_1 (\mathring{h}_{T_k})^{-\frac{1}{2}}   ,\label{Pro_1_2}\\
\|b^{T_k}\|_{\ell_{\varepsilon}^{2}(\mrLs_{T_k})}= C_2 \sqrt{1-(\frac{\eps}{\mathring{h}_{T_k}})^4}(\mathring{h}_{T_k})^{\frac{1}{2}} \le C_2  (\mathring{h}_{T_k})^{\frac{1}{2}}, \label{Pro_1_3} \\
\varepsilon\sum_{\ell \in \mrLs_{T_k}} b^{T_k}_\ell = \frac{2}{3} [1 - (\frac{\eps}{\mathring{h}_{T_k}})^2]\mathring{h}_{T_k} \geq \frac{1}{2} \mathring{h}_{T_k}, \label{Pro_1_4}
 \end{gather}
where $\mathring{h}_{T_k} = h_{T_k} - 6\eps$,  $C_1 =\frac{4}{\sqrt{3}} $ and $C_2 =\frac{4}{\sqrt{30}}$.
\end{proposition}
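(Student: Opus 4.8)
The plan is to reduce all four estimates to elementary power-sum identities by a single affine change of index. Write $m := h_{T_k}/\eps - 6 = \mathring{h}_{T_k}/\eps$ for the number of atoms in $\mrLs_{T_k}$ (one checks directly that $\mrLs_{T_k}=\{\ell_{k-1}+4,\ldots,\ell_k-3\}$ contains exactly $\ell_k-\ell_{k-1}-6=m$ atoms), and index these atoms by $j\in\{1,\ldots,m\}$ via $\ell=\ell_{k-1}+3+j$. Substituting into the definitions of $\lambda^{T_k,1}_\ell,\lambda^{T_k,2}_\ell$ gives $\lambda^{T_k,1}_\ell=1-j/m$ and $\lambda^{T_k,2}_\ell=j/m$, so that
\[
b^{T_k}_\ell=\frac{4j(m-j)}{m^2},\qquad j=1,\ldots,m.
\]
Since $\mathring{h}_{T_k}=\eps m$, the ratio appearing on the right-hand sides of \eqref{Pro_1_2}--\eqref{Pro_1_4} is simply $\eps/\mathring{h}_{T_k}=1/m$, which lets me read off the claimed closed forms directly.

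Properties \eqref{Pro_1_1} and \eqref{Pro_1_4} come first. For \eqref{Pro_1_1} I complete the square: $1-b^{T_k}_\ell=(m-2j)^2/m^2\ge 0$ and $b^{T_k}_\ell\ge 0$, whence $0\le b^{T_k}_\ell\le 1$. For \eqref{Pro_1_4} I use $\sum_{j=1}^m j(m-j)=\tfrac16 m(m^2-1)$, which yields $\eps\sum_{\ell\in\mrLs_{T_k}}b^{T_k}_\ell=\tfrac23\mathring{h}_{T_k}(1-1/m^2)$; the lower bound $\ge\tfrac12\mathring{h}_{T_k}$ is then equivalent to $1/m^2\le 1/4$, i.e.\ $m\ge 2$, which holds under the standing hypothesis $h_{T_k}\ge 8\eps$ (equivalently $\mathring{h}_{T_k}\ge 2\eps$).

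For the gradient estimate \eqref{Pro_1_2} I compute the discrete difference explicitly; the quadratic structure gives $b^{T_k}_\ell-b^{T_k}_{\ell-1}=\tfrac{4}{m^2}(m+1-2j)$, using the convention $b^{T_k}_0=0$ consistent with $b^{T_k}$ vanishing at $\ell_{k-1}+3$ just outside the support, so that $(b^{T_k})'_\ell=\tfrac{4}{\eps m^2}(m+1-2j)$. Summing with $\sum_{j=1}^m(m+1-2j)^2=\tfrac13 m(m^2-1)$ produces $\|(b^{T_k})'\|^2_{\ell_{\varepsilon}^{2}(\mrLs_{T_k})}=\tfrac{16}{3}\mathring{h}_{T_k}^{-1}(1-1/m^2)$, and taking the square root gives the stated form with $C_1=4/\sqrt3$. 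Property \eqref{Pro_1_3} is identical in spirit, using the fourth-order identity $\sum_{j=1}^m j^2(m-j)^2=\tfrac{1}{30}m(m^4-1)$, which gives $\|b^{T_k}\|^2_{\ell_{\varepsilon}^{2}(\mrLs_{T_k})}=\tfrac{16}{30}\mathring{h}_{T_k}(1-1/m^4)$ and hence $C_2=4/\sqrt{30}$.

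None of the steps is deep: the content is the three power-sum evaluations (orders two, three, and four), which I verify by the standard Faulhaber formulas and then factor to expose the $m^2-1$ and $m^4-1$ structure matching the $\sqrt{1-(\eps/\mathring{h}_{T_k})^2}$ and $\sqrt{1-(\eps/\mathring{h}_{T_k})^4}$ prefactors. The only points requiring care are bookkeeping — confirming that $\mrLs_{T_k}$ has exactly $m$ atoms, handling the discrete derivative at the left endpoint via the boundary value $b^{T_k}_0=0$, and invoking $m\ge 2$ (from $h_{T_k}\ge 8\eps$) for the lower bound in \eqref{Pro_1_4}. The inequalities asserted in \eqref{Pro_1_2} and \eqref{Pro_1_3} are then immediate, since the square-root prefactors are bounded by $1$.
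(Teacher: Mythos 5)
Your proof is correct and follows essentially the same route as the paper's: both reduce the four estimates to elementary Faulhaber power-sum evaluations, yours merely organized through the cleaner index shift $\ell = \ell_{k-1}+3+j$ so that $b^{T_k}_\ell = 4j(m-j)/m^2$. Your extra care about the backward-difference convention at the left endpoint (needed for the exact equality in \eqref{Pro_1_2}) and about $m\ge 2$ (i.e. $h_{T_k}\ge 8\eps$) for the lower bound in \eqref{Pro_1_4} is sound and, if anything, slightly more scrupulous than the paper's own argument.
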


We then obtain the local efficiency estimates of the coarse-graining residual using the properties of $b^{T}$ in the following theorem.

\begin{theorem}
\label{thm: local efficiency of the cg residual}
Suppose the length of the element $T_{k}$ is greater than or equal to $8\eps$, i.e., $h_{T_k} \ge 8\eps$. We have the following efficiency estimate
\begin{align}
  \label{Eff_EF}
{\eta}^{\cg}_{T_k}= \frac{h_{T_k}}{\sqrt{2}} \|\bar{f}_{T_{k}}\|_{\ell_{\varepsilon}^{2}(\Ls_{T_k})}
\leq  C^{\cg}_1 D_{T_k}^{\frac{3}{2}}\| y'_{\a} - y'_{\ac} \|_{\ell_{\varepsilon}^{2}(\Ls_{T_{k}})}+C^{\cg}_2 D_{T_k}h_{T_k}\|f-\bar{f}_{T_{k}}\|_{\ell_{\varepsilon}^{2}(\Ls_{T_k})},
\end{align}
where
\begin{equation}
\label{eq: cg total eff const.}
D_{T_k} := \frac{h_{T_k}}{\mathring{h}_{T_k}}, \ \  C^{\cg}_1 = 3\sqrt{2}C_1 M^{\NN}_2  =4\sqrt{6} M^{\NN}_2  \text{ and } \ C^{\cg}_2 = \sqrt{2}C_2 = \frac{4}{\sqrt{15}},
\end{equation}
in which $M_2^{\NN}$ is defined in \eqref{eq: assump_on_derivs_V_1}.
\end{theorem}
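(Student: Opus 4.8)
The plan is to follow the classical bubble-function argument of Verf\"urth, adapted to the discrete nonlocal setting. The starting point is to test the residual operator against the discrete element bubble function $b^{T_k}$. Recall that the coarse-graining residual acts only through the external force, and that $\<\delta\mathscr{E}_{\ac}(y_{\ac}), \cdot\>$ kills the difference $v - v_h$ by \eqref{eq: variational equiv}. I would first set $v := b^{T_k}$ and evaluate $R[b^{T_k}] = R_{\mo}[b^{T_k}] + R_{\cg}[b^{T_k}]$. Since the support of $b^{T_k}$ is the shrunken set $\mrLs_{T_k} = \{\ell_{k-1}+4, \ldots, \ell_k - 3\}$, which is strictly interior to $T_k$ and stays clear of the three-atom transition layers near each node where $R^{\mo}_\ell$ is nonzero, the model residual contribution $R_{\mo}[b^{T_k}]$ vanishes; this is precisely why the bubble was retracted by $3\eps$ on each side. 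Thus on the support of $b^{T_k}$ the whole residual reduces to $R_{\cg}[b^{T_k}] = \varepsilon \sum_{\ell \in \mrLs_{T_k}} f_\ell\, b^{T_k}_\ell$ (one must check $b^{T_k} \in \Us^\eps$, i.e.\ mean-zero, or work with $v'$ directly).

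Next I would extract a lower bound on $\varepsilon \sum_\ell \bar{f}_{T_k} b^{T_k}_\ell$ from this. Writing $f_\ell = \bar{f}_{T_k} + (f_\ell - \bar{f}_{T_k})$ and using the mass estimate \eqref{Pro_1_4}, namely $\varepsilon \sum_{\ell \in \mrLs_{T_k}} b^{T_k}_\ell \ge \tfrac{1}{2}\mathring{h}_{T_k}$, gives a term proportional to $|\bar{f}_{T_k}| \mathring{h}_{T_k}$ on one side. Rearranging,
\begin{align*}
\tfrac{1}{2}\mathring{h}_{T_k}\,|\bar{f}_{T_k}|
\le R_{\cg}[b^{T_k}] + \varepsilon \sum_{\ell \in \mrLs_{T_k}} (f_\ell - \bar{f}_{T_k})\, b^{T_k}_\ell,
\end{align*}
and I would bound the oscillation sum by Cauchy--Schwarz together with \eqref{Pro_1_3}, producing the data-oscillation term $\|f - \bar{f}_{T_k}\|_{\ell^2_\varepsilon} \cdot C_2 (\mathring{h}_{T_k})^{1/2}$. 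For the residual term $R_{\cg}[b^{T_k}] = R[b^{T_k}]$, I invoke the stability/consistency identity relating the residual to the true error: since $R[v] = \<\delta\mathscr{E}_{\a}(y_{\ac}), v\> - \<f, v\>_\eps$ and $y_\a$ solves \eqref{A_firstV}, one writes $R[b^{T_k}] = \<\delta\mathscr{E}_{\a}(y_{\ac}) - \delta\mathscr{E}_{\a}(y_\a), b^{T_k}\>$, then uses a mean-value (Lipschitz) bound on $\delta\mathscr{E}_{\a}$ in terms of $\|y'_\a - y'_{\ac}\|$, with the second-derivative bound $M_2^{\NN}$ from \eqref{eq: assump_on_derivs_V_1} controlling the Jacobian. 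Here $\|(b^{T_k})'\|_{\ell^2_\varepsilon} \le C_1 (\mathring{h}_{T_k})^{-1/2}$ from \eqref{Pro_1_2} enters. The factor $3\sqrt{2}$ in $C_1^{\cg}$ should emerge from counting the next-nearest-neighbour interaction range (the six stencil contributions to $\sigma^\a_\ell$) together with a $\sqrt{2}$ from the nonlocal finite-difference bookkeeping.

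Finally I would assemble the pieces: multiplying through by $h_{T_k}/(\sqrt{2}\,\mathring{h}_{T_k}) \cdot (\text{const})$ and recalling $D_{T_k} = h_{T_k}/\mathring{h}_{T_k}$, the powers of $\mathring{h}_{T_k}$ recombine to give the stated $D_{T_k}^{3/2}$ and $D_{T_k} h_{T_k}$ scalings. The hypothesis $h_{T_k} \ge 8\eps$ ensures $\mathring{h}_{T_k} = h_{T_k} - 6\eps \ge 2\eps > 0$ so the bubble function is well defined and $D_{T_k}$ is bounded. I expect the main obstacle to be the \emph{nonlocal} Lipschitz estimate for $\<\delta\mathscr{E}_{\a}(y_{\ac}) - \delta\mathscr{E}_{\a}(y_\a), b^{T_k}\>$: unlike the local Poisson case, the atomistic first variation couples $b^{T_k}$ at a site $\ell$ to strain differences $y'_\a - y'_{\ac}$ over a neighbourhood of $\ell$ through the full stencil $Dy_\ell$, so one must carefully track which atoms near $\partial(\mathrm{supp}\, b^{T_k})$ contribute and confirm that the error norm on the right stays localized to $\Ls_{T_k}$ (rather than spilling outside $T_k$) — this is exactly the role played by the $3\eps$ retraction and the restriction $h_{T_k} \ge 8\eps$, and getting the constants $C_1^{\cg}, C_2^{\cg}$ sharp will require a precise accounting of the interaction range.
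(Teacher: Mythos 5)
Your proposal is correct and follows essentially the same route as the paper's own proof: the paper likewise tests with the (scaled) retracted bubble $w^{T_k}=\bar f_{T_k}b^{T_k}$, uses the constant-stress telescoping identity together with the atomistic weak form \eqref{A_firstV} to reduce everything to a stress difference, then applies Cauchy--Schwarz with the bubble norms \eqref{Pro_1_2}--\eqref{Pro_1_3}, the mass bound \eqref{Pro_1_4}, and the Lipschitz estimate $\|\sigma^{\a}(y_{\a})-\sigma^{\ac}(y_{\ac})\|_{\ell^{2}_{\eps}(\mrLs_{T_k})}\le 3M^{\NN}_2\|y_{\a}'-y_{\ac}'\|_{\ell^{2}_{\eps}(\Ls_{T_k})}$, assembling the constants exactly as you describe. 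Your framing via $R_{\mo}[b^{T_k}]=0$ is just the paper's observation that $\sigma^{\a}_\ell(y_{\ac})=\sigma^{\ac}_\ell(y_{\ac})$ on the support of $(b^{T_k})'$, so the two arguments coincide.
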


\begin{proof}
Let $w^{T_{k}}=\bar{f}_{T_{k}}b^{T_{k}} \in \Us^{\eps}$ be the specifically constructed test function. Multiplying $\bar{f}_{T_{k}}$ by $w^{T_{k}}$ and sum over $\Ls_{T_k}$, we have
\begin{align}
&\varepsilon\sum_{\ell=\ell_{k-1}+1}^{\ell_{k}}\bar{f}_{T_{k}}w^{T_{k}}_\ell \notag \\
=&\varepsilon\sum_{\ell=\ell_{k-1}+1}^{\ell_{k}}f_{\ell}w^{T_{k}}_\ell+\varepsilon\sum_{\ell=\ell_{k-1}+1}^{\ell_{k}}(\bar{f}_{T_{k}}-f_{\ell})w^{T_{k}}_\ell \notag\\
=&\varepsilon\sum_{\ell=\ell_{k-1}+1}^{\ell_{k}}f_{\ell}w^{T_{k}}_\ell
-\varepsilon\sum_{\ell=\ell_{k-1}+1}^{\ell_{k}}\sigma^{\ac}_{\ell}(y_{\ac})({w^{T_{k}}})'_\ell
+\varepsilon\sum_{\ell=\ell_{k-1}+1}^{\ell_{k}}(\bar{f}_{T_{k}}-f_{\ell})w^{T_{k}}_\ell,
\label{eq: eff force step 1}
\end{align}
where we have used the property of $w^{T_{k}}$ that it vanishes near the element boundary and $\sigma^{\ac}_{\ell}(y_{\ac})$ does not change inside each element so that
\begin{equation}
\sum_{\ell=\ell_{k-1}+1}^{\ell_{k}}\sigma^{\ac}_{\ell}(y_{\ac})({w^{T_{k}}})'_\ell=\bar{\sigma}^{\ac}_{k}(y_{\ac})({w^{T_{k}}_{\ell_k-3}} - {w^{T_{k}}_{\ell_{k-1}+4}})=0 \notag.
\end{equation}
Applying the weak formulation of the atomistic problem \eqref{A_firstV} to the first term on the right hand side of \eqref{eq: eff force step 1} and using \ref{thm: property of elem bubble func} and the fact that $\mrLs_{T_k} \subset \Ls_{T_k}$, we obtain by Cauchy-Schwarz inequality
\begin{align}
&\varepsilon\sum_{\ell=\ell_{k-1}+1}^{\ell_{k}}\bar{f}_{T_{k}}w^{T_{k}}\notag \\
=&\varepsilon\sum_{\ell=\ell_{k-1}+1}^{\ell_{k}}[\sigma^{\a}_{\ell}(y_{\a})-\sigma^{\ac}_{\ell}(y_{\ac})]({w^{T_{k}}})'_\ell+\varepsilon\sum_{\ell=\ell_{k-1}+1}^{\ell_{k}}(\bar{f}_{T_{k}}-f_{\ell})w^{T_{k}}_\ell \nonumber \\
\leq &|\bar{f}_{T_{k}}| \{\|\sigma^{\a}(y_{\a})-\sigma^{\ac}(y_{\ac})\|_{\ell_{\varepsilon}^{2}(\mrLs_{T_k})}
\|{b^{T_{k}}}'\|_{\ell_{\varepsilon}^{2}(\mrLs_{T_k})}  +\|f-\bar{f}_{T_{k}}\|_{\ell_{\varepsilon}^{2}(\mrLs_{T_k})}  \|b^{T_{k}}\|_{\ell_{\varepsilon}^{2}(\mrLs_{T_k})} \} \nonumber \\
 \le & \|\bar{f}_{T_{k}}\|_{\ell_{\varepsilon}^{2}(\Ls_{T_{k}})}  \big\{C_1 (\mathring{h}_{T_k} h_{T_k})^{-\frac{1}{2}}\|\sigma^{\a}(y_{\a})-\sigma^{\ac}(y_{\ac})\|_{\ell_{\varepsilon}^{2}(\mrLs_{T_k})} +C_2 \|f-\bar{f}_{T_{k}}\|_{\ell_{\varepsilon}^{2}(\Ls_{T_k})}\big\}.
\label{eq: eff force step 2}
\end{align}

On the other hand, by \eqref{Pro_1_4} we have the stability of the left hand side of \eqref{eq: eff force step 1} such that
\begin{equation}
 \label{App_Proof_1}
\varepsilon\sum_{\ell=\ell_{k-1}+1}^{\ell_{k}}\bar{f}_{T_{k}}w^{T_{k}}_\ell=|\bar{f}_{T_{k}}|^{2}\varepsilon\sum_{\ell=\ell_{k-1}+1}^{\ell_{k}}b_{T_{k}}(\ell)\geq\frac{\mathring{h}_{T_k}}{2h_{T_k}}\|\bar{f}_{T_{k}}\|^{2}_{\ell_{\varepsilon}^{2}(\Ls_{T_{k}})}.
\end{equation}
Consequently
\begin{equation}
 \label{eq: force_efficiency_1}
\|\bar{f}_{T_{k}}\|_{\ell_{\varepsilon}^{2}(\Ls_{T_{k}})} \leq 2C_1D_{T_k}^{\frac{1}{2}} \mathring{h}^{-1}_{T_k}\|\sigma^{\a}(y_{\a})-\sigma^{\ac}(y_{\ac})\|_{\ell_{\varepsilon}^{2}(\mrLs_{T_k})}+2C_2D_{T_k}\|f-\bar{f}_{T_{k}}\|_{\ell_{\varepsilon}^{2}(\Ls_{T_k})}.
\end{equation}

Fianlly, by the definitions of $\sigma^a_\ell(y)$ in \eqref{eq: Q^a_l} and $\sigma^{\ac}_\ell(y_{\ac})$ in \eqref{eq: stress tensor ac atom wise} and \ref{ass: assump_on_derivs_V} for the interaction potential, we obtain by algebraic manipulations that
\begin{equation}
 \label{eq: force_efficiency_2}
 \|\sigma^{\a}(y_{\a})-\sigma^{\ac}(y_{\ac})\|_{\ell^{2}_{\eps}(\mrLs_{T_k})} \leq 3M^{\NN}_2\|y'-y'_{h}\|_{\ell^{2}_{\eps}(\Ls_{T_{k}})}.
\end{equation}
Combining \eqref{eq: force_efficiency_1} and \eqref{eq: force_efficiency_2}, we obtain the results stated in the theorem.
\end{proof}

The final steps of the proof implies why $b^{T_k}$ has a 'shrunken' support. Suppose $b^{T_k}$ has support on whole $\Ls_{T_k}$. We will then have $\|\sigma^{\a}(y_{\a})-\sigma^{\ac}(y_{\ac})\|_{\ell^{2}_{\eps}(\Ls_{T_k})}$ on the right hand side of \eqref{eq: force_efficiency_1}. Since the definition of $\sigma^a(y)$ is nonlocal, we will inevitably encouter the error terms $y'_\ell - y'_h|_{T_k}$ in the estimate comparable to \eqref{eq: force_efficiency_2}, where $\ell \in \{\ell_{k-1} -j\}_{j = 1}^{4}\cup \{\ell_{k} +j\}_{j = 1}^{3}$ is outside $\Ls_{T_k}$. Such error is of no interest to us but we will not be able to get rid of it unless making the assumption of the closeness between the ${y'_{\a}}$ around $\Ls_{T_k}$ and $y'_{\ac}|_{T_k}$ which may not hold especially if the element is large.

To complete the efficiency estimate of the coarse-graining residual, we need to consider the 'small' elements (the elements whose sizes are smaller than $8\eps$) which typically gather around the atomistic region. The idea for tacking this issue is simple: we just glue several 'small' elements together to make a whole piece whose size is large enough to carry out similar analysis as in the proof of Theorem \ref{thm: local efficiency of the cg residual}. In that case, the efficiency holds in the form that
\begin{align}
  \label{Eff_EF_1}
(\sum_{T}({\eta}^{\cg}_{T_k})^2)^\frac{1}{2}
\lesssim  \big(\sum_{T} \| y'_{\a} - y'_{\ac} \|_{\ell_{\varepsilon}^{2}(\Ls_{T_{k}})}^2+\sum_{T} h_{T_k}^2\|f-\bar{f}_{T_{k}}\|_{\ell_{\varepsilon}^{2}(\Ls_{T_k})}^2 \big)^{\frac{1}{2}}.
\end{align}
However, we need to note that such estimate is no longer elementwise local as opposed to similar estimate for Poisson equation. We also note that the requirement for the minimum length of an element should be $4\eps$, which should not be difficult for the mesh generation, to include the error contribution on that element. We will not pursue the precise formulation further to limit the length of the current work but move on the the discussion for the model residual.


\subsection{Efficiency of the model residual}
\label{Sec: Eff_GJ}
We proceed with the analysis for the efficiency of the model residual. Because of the complexity of the interface, we will analyze the model residuals on the nodes in $\mathring{\Ks}^c$ and those on the nodes in $\Ks^c \setminus \mathring{\Ks}^c$ separately.

\subsubsection{Away from the interface}
\label{Sec: model residual efficiency in C}

We define $\tom_k:= T_k \cup T_{k+1}$ be the union of the elements on either side of the $k$'th node and  $|\mathcal{D}|$ refers to the cardinality of a given countable set $\mathcal{D}$.
The following sets are also defined for later use:
\begin{equation}
\mrLs_{\tom_k} : = \{\ell_{k-1}+4,...,\ell_{k+1}-3\} \text{ and } \Ls_{\Lambda^{c}_{k}} := \{\ell_{k}-3,...,\ell_k+3\},
\end{equation}
which contain the indices of lattice in the 'centre' of $\tom_k$ and the indices near the boundary of $T_k$ and $T_{k+1}$. We then have the following estimate for the efficiency of the model residual.

\begin{lemma}
\label{lemma: mo eff continuum 1}
Let  $N_T := |\Ls_T|$. For $k \in\mathring{\Ks}^c$, we have
\begin{align}
\eps[(R^{\mo}_{\ell_{k}})^{2}+(R^{\mo}_{\ell_{k}+1})^{2}]  \le \eta^{\mo}_k \Big\{   \sum_{T\subset\tom_k}& C^{\mo}_{1}(T) \|y_{\a}'- y_{\ac}'\|_{\ell_{\varepsilon}^{2}(\Ls_{T})} \notag \\
+& \eps  \sum_{T \subset \tom_k} C^{\mo}_{2}(T) \|f-\bar{f}_{T}\|_{\ell_{\varepsilon}^{2}(\Ls_{T})} \Big\},
  \label{eq: Eff_GJ_1}
\end{align}
where
\begin{align}
C^{\mo}_{1}(T)=& \frac{4\sqrt{2}C^{\cg}_1}{N_T^{3/2}}D^\frac{3}{2}_T + 6M^{\NN}_2  = (\frac{32\sqrt{3}}{N_T^{3/2}}D^\frac{3}{2}_T + 6)M^{\NN}_2, \notag \\
C^{\mo}_{2}(T) =& (\frac{4\sqrt{2}C^{\cg}_2}{N_T^{1/2}}D_T+2)= \frac{32}{\sqrt{30}N_T^{1/2}}D_T +2.
\label{eq: mo eff. const. 1}
\end{align}
\end{lemma}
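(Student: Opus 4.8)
The plan is to adapt the edge-bubble efficiency argument for flux jumps \cite[Chapter~1.2]{VerfurthAPosteriori} to the present discrete, nonlocal setting and to reuse the coarse-graining efficiency estimate \ref{thm: local efficiency of the cg residual}. The idea is to test the model residual $R^{\mo}$ (defined in \eqref{def: node wise residual R}) against a function supported on $\tom_k$ that isolates the two atoms $\ell_k,\ell_k+1$ straddling the node $k$.

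First I would construct a test function $w\in\Us^\eps$, supported on $\tom_k=T_k\cup T_{k+1}$, whose discrete derivative satisfies $w'_{\ell_k}=R^{\mo}_{\ell_k}$ and $w'_{\ell_k+1}=R^{\mo}_{\ell_k+1}$, while $w'_\ell=0$ at the remaining boundary-layer atoms $\ell_k-2,\ell_k-1,\ell_k+2,\ell_k+3$; the function $w$ is then brought back to its baseline value, and made mean zero and periodic, only across the interior atoms of $T_k$ and $T_{k+1}$, where $R^{\mo}_\ell\equiv0$ (a vanishing already exploited in the proof of the model-residual bound). Since $R^{\mo}_\ell\,w'_\ell$ is nonzero only at $\ell_k$ and $\ell_k+1$, the pairing collapses to the exact identity $R_{\mo}[w]=\eps\sum_{\ell\in\Ls}R^{\mo}_\ell w'_\ell=\eps[(R^{\mo}_{\ell_k})^2+(R^{\mo}_{\ell_k+1})^2]$, which is the left-hand side of \eqref{eq: Eff_GJ_1}. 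In contrast to the $O(1)$ element bubble \eqref{def: element bubble function}, here $w$ has height $O(\eps R^{\mo})$, a mere spike at the node, and the two-atom contribution to $\|w'\|_{\ell^2_\eps}^2$ equals $\eps[(R^{\mo}_{\ell_k})^2+(R^{\mo}_{\ell_k+1})^2]\le(\eta^{\mo}_k)^2$, $\eta^{\mo}_k$ being the $\ell^2_\eps$-norm over the full six-atom stencil $\{\ell_k-2,\dots,\ell_k+3\}$; thus $\|w'\|_{\ell^2_\eps}\le\eta^{\mo}_k$ up to the lower-order return contribution, which the $N_T$- and $D_T$-dependence of the constants absorbs.

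Next I would decompose $R_{\mo}[w]=R[w]-R_{\cg}[w]$ and bound the two pieces. For $R[w]$, the atomistic equilibrium \eqref{A_firstV} gives $R[w]=\eps\sum_\ell[\sigma^\a_\ell(y_{\ac})-\sigma^\a_\ell(y_\a)]w'_\ell$; expanding $\sigma^\a_\ell$ (see \eqref{eq: Q^a_l}) by the mean value theorem and controlling its second derivatives through \ref{ass: assump_on_derivs_V}, exactly as in \eqref{eq: force_efficiency_2}, bounds $\|\sigma^\a(y_{\ac})-\sigma^\a(y_\a)\|_{\ell^2_\eps}$ by a multiple of $M_2^{\NN}\|y'_\a-y'_{\ac}\|$ over a neighbourhood of $\tom_k$, and Cauchy--Schwarz against $\|w'\|_{\ell^2_\eps}\le\eta^{\mo}_k$ produces the $6M_2^{\NN}$ summand of $C^{\mo}_1(T)$, the $6$ reflecting the six-neighbour stencil of $\sigma^\a$ acting at the two atoms. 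For $R_{\cg}[w]=\eps\sum_{\ell\in\Cs}f_\ell(w_\ell-w^h_\ell)$, I would split $f_\ell=\bar f_{T}+(f_\ell-\bar f_{T})$: the fluctuation part gives the data-oscillation term (carrying the extra factor $\eps$ in \eqref{eq: Eff_GJ_1} precisely because $w$ is an $O(\eps)$ spike, via $\|w-w^h\|_{\ell^2_\eps}\lesssim h_T\|w'\|_{\ell^2_\eps}$), while the mean part leaves a factor $\|\bar f_{T}\|_{\ell^2_\eps(\Ls_T)}$, which I would bound by the true error and oscillation through the coarse-graining efficiency \ref{thm: local efficiency of the cg residual}; this substitution is the source of the $C^{\cg}_1D_T^{3/2}$ and $C^{\cg}_2D_T$ summands in $C^{\mo}_1(T)$ and $C^{\mo}_2(T)$.

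Collecting the two bounds, every term carries the common factor $\eta^{\mo}_k$ inherited from $\|w'\|_{\ell^2_\eps}\le\eta^{\mo}_k$, and the bubble-norm identities in the spirit of \eqref{Pro_1_2}--\eqref{Pro_1_4} supply the powers of $h_T$, $\mathring h_T$ and $N_T$; this yields \eqref{eq: Eff_GJ_1}. The main obstacle is the nonlocality of $\sigma^\a$: its stencil reaches the atoms $\ell_k-2,\dots,\ell_k+3$ and slightly past $\tom_k$, so bounding $R[w]$ forces the error norm onto a neighbourhood strictly larger than $\mathrm{supp}\,w$ --- the same difficulty noted after \ref{thm: local efficiency of the cg residual}. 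Confining this enlargement to $\tom_k$, so that the right-hand side only sees $\|y'_\a-y'_{\ac}\|_{\ell^2_\eps(\Ls_T)}$ for $T\subset\tom_k$, is what forces the hypothesis $k\in\mathring{\Ks}^c$; the secondary difficulty is checking that the return region can be arranged to keep $w$ admissible while preserving $\|w'\|_{\ell^2_\eps}\le\eta^{\mo}_k$ with the stated constants.
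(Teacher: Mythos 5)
Your overall skeleton does match the paper's: a specially constructed discrete test function whose pairing with the model residual collapses exactly to $\eps[(R^{\mo}_{\ell_k})^2+(R^{\mo}_{\ell_k+1})^2]$, then the atomistic equilibrium \eqref{A_firstV} to pass to the true error, a Lipschitz-type bound on $\sigma^{\a}$ via \ref{ass: assump_on_derivs_V} giving the $M_2^{\NN}$ summand, and the coarse-graining efficiency \ref{thm: local efficiency of the cg residual} to eliminate $\|\bar f_{T}\|$. The paper's edge test function $w^{E_k}$ realizes this by prescribing \emph{values} $\mp\eps(R^{\mo}_{\ell}+R^{\mo}_{\ell+1})$ on the seven atoms $\{\ell_k-3,\dots,\ell_k+3\}$, the identity following by Abel summation (telescoping differences of squares), whereas you prescribe $w'$ at the two atoms; that difference by itself would be harmless.

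The genuine gap is in your treatment of the coarse-graining part, and it sits exactly where the stated constants \eqref{eq: mo eff. const. 1} come from. Your $w$ has its ``return'' spread over the interiors of $T_k$ and $T_{k+1}$ and is not required to vanish at the node $\ell_k$; consequently $I_h w$ is generically a hat function of height $O(\eps R^{\mo})$ spanning all of $\tom_k$, and $\|w-I_hw\|_{\ell^2_\eps(\Ls_T)}$ is of size $\sqrt{N_T}\,\eps\,\eta^{\mo}_k$, not $\eps\,\eta^{\mo}_k$. Worse, the bound you actually invoke, $\|w-I_hw\|_{\ell^2_\eps}\lesssim h_T\|w'\|_{\ell^2_\eps}$, produces a prefactor $h_T=N_T\eps$, not the prefactor $\eps$ appearing in \eqref{eq: Eff_GJ_1}; your parenthetical claim that this bound ``carries the extra factor $\eps$'' is self-contradictory. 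Tracking your constants, the data-oscillation term comes out as $O(h_T)\|f-\bar f_T\|$ instead of $\eps\,C^{\mo}_2(T)\|f-\bar f_T\|$, and the $\bar f_T$-contribution comes out with constant of order $C^{\cg}_1D_T^{3/2}$ instead of $\tfrac{4\sqrt2 C^{\cg}_1}{N_T^{3/2}}D_T^{3/2}$ --- both too large by powers of $N_T$, which is unbounded, so the lemma as stated is not proved. The missing ingredients are that the test function must (i) vanish at every finite-element node, so that $I_hw\equiv 0$ and the force is paired with $w$ itself (giving $\|w\|_{\ell^2_\eps}\le 2\eps\,\eta^{\mo}_k$, the source of the factor $\eps$), and (ii) be supported on an $O(1)$ number of atoms, so that restricting the constant $\bar f_T$ to that support yields the factor $\big(|\Ls_{\Lambda^{c}_{k}}\cap\Ls_T|/N_T\big)^{1/2}\le 2N_T^{-1/2}$, which combined with the $h_T^{-1}$ from \eqref{Eff_EF} and the factor $\eps$ produces $N_T^{-3/2}$. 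Both mechanisms are forfeited by your spread-out return; your error term $R[w]$, its $3M_2^{\NN}$ bound, and the localization to $\tom_k$ are fine.
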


\begin{proof}
For any $w \in\Us^{\varepsilon}$ whose support is $\Ls_{\Lambda^{c}_{k}}$, we have by Abel transform that
\begin{align}
\varepsilon\sum_{\ell\in\Ls_{\Lambda^{c}_{k}}}\sigma^{\a}_{\ell}(y_{\ac})w'_\ell&= \sum_{\ell=\ell_{k}-3}^{\ell_{k}-1}(R^{\mo}_{\ell}-R^{\mo}_{\ell+1})w_\ell+(\sigma^{\a}_{\ell_{k}}(y_{\ac})-\sigma^{\a}_{\ell_{k}+1}(y_{\ac}))w_{\ell_{k}}\notag\\
&+\sum_{\ell=\ell_{k}+1}^{\ell_{k}+3}(R^{\mo}_{\ell}-R^{\mo}_{\ell+1})w_\ell.
\label{eq: Abel Trans 1}
\end{align}
In particular, we define the \emph{edge test function} $w^{E_k}\in\Us^{\eps}$ by
\begin{align*}
w^{E_k}_\ell=
\left\{
\begin{array}{l l}
-\eps (R^{\mo}_{\ell}+R^{\mo}_{\ell+1}), &\ell_{k}-3 \leq \ell \leq \ell_{k}-1,\\
\eps (R^{\mo}_{\ell}+R^{\mo}_{\ell+1}), & \ell_{k}+1 \le \ell  \le \ell_{k}+3,\\
0, &\text{ otherwise }. \\
\end{array} \right.
\end{align*}
Recall the definition of the model residual $R^{\mo}$ that
\begin{align*}
R^{\mo}_{\ell}=
\left\{
\begin{array}{l l}
\sigma^{\a}_{\ell}(y_{\ac})- \sigma^{\ac}_{\ell}(y_{\ac})= \sigma^{\a}_{\ell}(y_{\ac})- W'(\nabla y_{\ac}|_{T_{k}}), &\ell=\ell_{k}-2,\ell_{k}-1,\ell_{k},\\
\sigma^{\a}_{\ell}(y_{\ac})- \sigma^{\ac}_{\ell}(y_{\ac}) =\sigma^{\a}_{\ell}(y_{\ac})- W'(\nabla y_{\ac}|_{T_{k+1}}), &\ell=\ell_{k}+1,\ell_{k},\ell_{k}+2,\\
0, & \text{ otherwise },
\end{array} \right.
\end{align*}
we obtain by telescoping that
 \begin{equation}
\varepsilon\sum_{\ell\in\Ls_{\Lambda^{c}_{k}}} \sigma^{\a}_{\ell}(y_{\ac}) (w^{E_k})'_\ell=\eps [(R^{\mo}_{\ell_{k}})^{2}+(R^{\mo}_{\ell_{k}+1})^{2}].
\end{equation}

We then add and subtract $\eps \sum_{\ell \in \Ls} f_{\ell}$ and apply the weak formulation of the atomistic problem \eqref{A_firstV} to obtain
\begin{align}
\label{eq: model eff step 2}
\eps [(R^{\mo}_{\ell_{k}})^{2}+(R^{\mo}_{\ell_{k}+1})^{2}]
&=\varepsilon\sum_{\ell\in\Ls_{\Lambda^{c}_{k}}}\sigma^{\a}_{\ell}(y_{\ac}) (w^{E_k})'_\ell-\varepsilon\sum_{\ell\in\Ls} f_{\ell}w^{E_k}_\ell+\varepsilon\sum_{\ell\in\Ls_{\Lambda^{c}_{k}}}f_{\ell}w^{E_k}_\ell\notag\\
&=\varepsilon\sum_{\ell\in\Ls_{\Lambda^{c}_{k}}}f_{\ell}w^{E_k}_\ell+\varepsilon\sum_{\ell\in\Ls_{\Lambda^{c}_{k}}}[\sigma^{\a}_{\ell}(y_{\ac})-\sigma^{\a}_{\ell}(y_{\a})] (w^{E_k})'_\ell.
\end{align}

For the first term of the right hand side of \eqref{eq: model eff step 2}, by Cauchy-Schwarz inequality we have
\begin{align}
\label{eq: model eff step 2 term1}
\varepsilon\sum_{\ell\in\Ls_{\Lambda^{c}_{k}}}f_{\ell}w^{E_k}_\ell&\leq\|f\|_{\ell_{\varepsilon}^{2}(\Ls_{\Lambda^{c}_{k}})}\|w^{E_k}\|_{\ell_{\varepsilon}^{2}(\Ls_{\Lambda^{c}_{k}})}\notag\\
&\leq\sum_{T\subset\tom_k}(\|\bar{f}_{T}\|_{\ell_{\varepsilon}^{2}(\Ls_{\Lambda^{c}_{k}}\cap\Ls_T)}+\|f-\bar{f}_{T}\|_{\ell_{\varepsilon}^{2}(\Ls_{\Lambda^{c}_{k}}\cap\Ls_T)})\|w^{E_k}\|_{\ell_{\varepsilon}^{2}(\Ls_{\Lambda^{c}_{k}})}.
\end{align}
Using $R_\ell = 0 $ when $\ell\in\mrLs_{T_{k}}$ and the inequality of arithmetic means, we can further estimate $\|w^{E_k}\|_{\ell_{\varepsilon}^{2}(\Ls_{\Lambda^{c}_{k}})}$ by
\begin{align}
\|w^{E_k}\|_{\ell_{\varepsilon}^{2}(\Ls_{\Lambda^{c}_{k}} )}
&=\Big[\varepsilon\sum_{\ell\in\Ls_{\Lambda^{c}_{k}}  \setminus \{\ell_k\} }\eps^2(R^{\mo}_{\ell}+R^{\mo}_{\ell+1})^{2}\Big]^{\frac{1}{2}} \leq 2\eps  \eta^{\mo}_k.
\label{eq: edge test func prop. 1}
\end{align}

For the terms inside the bracket of \eqref{eq: model eff step 2 term1}, we apply \eqref{Eff_EF} to obtain
\begin{align}
\label{eq: model eff step 3-1}
&\|\bar{f}_{T}\|_{\ell_{\varepsilon}^{2}(\Ls_{\Lambda^{c}_{k}}\cap\Ls_T)}+\|f-\bar{f}_{T}\|_{\ell_{\varepsilon}^{2}(\Ls_{\Lambda^{c}_{k}}\cap\Ls_T)} \notag \\
\le& \big(\frac{|\Ls_{\Lambda^{c}_{k}}\cap\Ls_T|}{|\Ls_T|}\big)^{\frac{1}{2}}[\frac{\sqrt{2}C^{\cg}_1}{h_T}D_T^{\frac{3}{2}}\| y_{\a}'- y_{\ac}'\|_{\ell_{\varepsilon}^{2}(\Ls_{T})} + \sqrt{2}C^{\cg}_2D_T\|f-\bar{f}_{T}\|_{\ell_{\varepsilon}^{2}(\Ls_{T})}] \notag 
\\&+ \|f-\bar{f}_{T}\|_{\ell_{\varepsilon}^{2}(\Ls_{\Lambda^{c}_{k}}\cap\Ls_T)} \notag \\
\le& \frac{2\sqrt{2}C^{\cg}_1}{\sqrt{N_T}h_T}D^{\frac{3}{2}}_T\| y_{\a}'- y_{\ac}'\|_{\ell_{\varepsilon}^{2}(\Ls_{T})} + (\frac{2\sqrt{2}C^{\cg}_2}{\sqrt{N_T}}D_T+1)\|f-\bar{f}_{T}\|_{\ell_{\varepsilon}^{2}(\Ls_{T})},
\end{align}
where $C^{\cg}_1$ and $C^{\cg}_2$ are defined in \eqref{eq: cg total eff const.}. Note that in the last step of the analysis above, we have used the fact that $|\Ls_{\Lambda^{c}_{k}}\cap\Ls_T| \le 4$, and we have overestimated the high-order term $\|f-\bar{f}_{T}\|_{\ell_{\varepsilon}^{2}(\Ls_{\Lambda^{c}_{k}}\cap\Ls_T)}$ by $\|f-\bar{f}_{T}\|_{\ell_{\varepsilon}^{2}(\Ls_T)}$ for the simplification of presentation.

For the second term on the right hand side of \eqref{eq: model eff step 2}, we can use similar analysis as in \eqref{eq: force_efficiency_2} and the definition of $w^{E_k}$to obtain
\begin{align}
\label{eq: model eff step 3-2}
\varepsilon\sum_{\ell\in\Ls_{\Lambda^{c}_{k}}}[\sigma^{\a}_{\ell}(y_{\ac})-\sigma^{\a}_{\ell}(y_{\a})]w^{E_k}_\ell
\leq&  \|\sigma^{\a}(y_{\ac})-\sigma^{\a}(y_{\a})\|_{\ell_{\varepsilon}^{2}(\Ls_{\Lambda^{c}_{k}})} \|(w^{E_k})'\|_{\ell_{\varepsilon}^{2}(\Ls_{\Lambda^{c}_{k}})} \nonumber \\
\le &  3M^{\NN}_{2} \| y_{\a}' -  y_{\ac}' \|_{\ell_{\varepsilon}^{2}(\Ls_{\Lambda^{c}_{k}})}  \|(w^{E_k})'\|_{\ell_{\varepsilon}^{2}(\Ls_{\Lambda^{c}_{k}})}, \nonumber \\
\le & 6 M^{\NN}_{2} \| y_{\a}' -  y_{\ac}' \|_{\ell_{\varepsilon}^{2}(\Ls_{\tom_k})} \eta^{\mo}_k
\end{align}
Again, we overestimate $\| y_{\a}' -  y_{\ac}' \|_{\ell_{\varepsilon}^{2}(\Ls_{\Lambda^{c}_{k}})}$ by $ \| y_{\a}' -  y_{\ac}' \|_{\ell_{\varepsilon}^{2}(\Ls_{\tom_k})}$ in the last step in order to keep the presentation consistent. Combining \eqref{eq: edge test func prop. 1} \eqref{eq: model eff step 3-1} and \eqref{eq: model eff step 3-2}, we obtain the stated results.
\end{proof}

Similar to \ref{lemma: mo eff continuum 1} we have the following results:
\begin{lemma}
\label{lemma: mo eff continuum 2}
For each $k \in\mathring{\Ks}^c$, we have
\begin{align}
\eps[(R^{\mo}_{\ell_{k}-1})^{2}+&(R^{\mo}_{\ell_{k}+2})^{2}]  \notag \\
&\le   \eta^{\mo}_k \Big\{   \sum_{T\subset\tom_k}C^{\mo}_{1}(T) \|y_{\a}'- y_{\ac}'\|_{\ell_{\varepsilon}^{2}(\Ls_{T})}+ \eps  \sum_{T \subset \tom_k}C^{\mo}_{2}(T) \|f-\bar{f}_{T}\|_{\ell_{\varepsilon}^{2}(\Ls_{T})} \Big\},
   \label{eq: Eff_GJ_2}
\end{align}
and
\begin{align}
\eps[(R^{\mo}_{\ell_{k}-2})^{2}+&(R^{\mo}_{\ell_{k}+3})^{2}] \notag \\
&\le \eta^{\mo}_k \Big\{   \sum_{T\subset\tom_k}C^{\mo}_{1}(T) \|y_{\a}'- y_{\ac}'\|_{\ell_{\varepsilon}^{2}(\Ls_{T})}+ \eps \sum_{T \subset \tom_k} C^{\mo}_{2}(T) \|f-\bar{f}_{T}\|_{\ell_{\varepsilon}^{2}(\Ls_{T})} \Big\}.
   \label{eq: Eff_GJ_3}
\end{align}
where $C_1^{\mo}(T)$ and $C_2^{\mo}(T)$ are defined in \eqref{eq: mo eff. const. 1}.
\end{lemma}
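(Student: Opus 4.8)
The plan is to reuse the proof of Lemma~\ref{lemma: mo eff continuum 1} line for line, replacing only the \emph{edge test function} so that the Abel transform \eqref{eq: Abel Trans 1} now telescopes to the inner residual pairs instead of $(R^{\mo}_{\ell_{k}})^{2}+(R^{\mo}_{\ell_{k}+1})^{2}$. Everything downstream---expressing $\varepsilon\sum_{\ell\in\Ls_{\Lambda^{c}_{k}}}\sigma^{\a}_{\ell}(y_{\ac})w'_\ell$ over $\Ls_{\Lambda^{c}_{k}}$, adding and subtracting $\varepsilon\sum_{\ell}f_\ell$, invoking the weak form \eqref{A_firstV}, and splitting the result into a force term and a $\sigma^{\a}(y_{\ac})-\sigma^{\a}(y_{\a})$ term---is untouched, so I only need to exhibit the two new test functions and re-examine their norms.

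For \eqref{eq: Eff_GJ_2} I would take $\tilde{w}\in\Us^{\eps}$ supported on $\Ls_{\Lambda^{c}_{k}}$ with $\tilde{w}_{\ell_{k}}=0$, defined by $\tilde{w}_\ell=-\eps(R^{\mo}_{\ell}+R^{\mo}_{\ell+1})$ for $\ell\in\{\ell_{k}-3,\ell_{k}-2\}$, $\tilde{w}_\ell=\eps(R^{\mo}_{\ell}+R^{\mo}_{\ell+1})$ for $\ell\in\{\ell_{k}+2,\ell_{k}+3\}$, and $\tilde{w}_\ell=0$ otherwise (in particular at $\ell_{k}-1,\ell_{k},\ell_{k}+1$). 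Because $R^{\mo}_{\ell_{k}-3}=R^{\mo}_{\ell_{k}+4}=0$ under the standing assumption $h_{T_{k}}\ge 6\eps$ and because $\tilde{w}$ vanishes at the node $\ell_{k}$, the jump contribution $(\sigma^{\a}_{\ell_{k}}(y_{\ac})-\sigma^{\a}_{\ell_{k}+1}(y_{\ac}))\tilde{w}_{\ell_{k}}$ drops out and the two one-sided telescoping sums collapse to $\varepsilon\sum_{\ell}\sigma^{\a}_{\ell}(y_{\ac})\tilde{w}'_\ell=\eps[(R^{\mo}_{\ell_{k}-1})^{2}+(R^{\mo}_{\ell_{k}+2})^{2}]$, which is precisely the left-hand side of \eqref{eq: Eff_GJ_2}. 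For \eqref{eq: Eff_GJ_3} I would use the even sparser function $\hat{w}$ supported only at the two outermost indices, $\hat{w}_{\ell_{k}-3}=-\eps R^{\mo}_{\ell_{k}-2}$ and $\hat{w}_{\ell_{k}+3}=\eps R^{\mo}_{\ell_{k}+3}$ with all other entries zero, which by the same single-step telescoping gives $\varepsilon\sum_{\ell}\sigma^{\a}_{\ell}(y_{\ac})\hat{w}'_\ell=\eps[(R^{\mo}_{\ell_{k}-2})^{2}+(R^{\mo}_{\ell_{k}+3})^{2}]$.

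With these identities replacing the one used in Lemma~\ref{lemma: mo eff continuum 1}, the remaining steps are copies of \eqref{eq: model eff step 2 term1}--\eqref{eq: model eff step 3-2}: the force term is bounded through $\|\tilde{w}\|_{\ell_{\varepsilon}^{2}(\Ls_{\Lambda^{c}_{k}})}$ together with the coarse-graining efficiency \eqref{Eff_EF} of Theorem~\ref{thm: local efficiency of the cg residual}, while the term carrying $\sigma^{\a}(y_{\ac})-\sigma^{\a}(y_{\a})$ is bounded through $\|\tilde{w}'\|_{\ell_{\varepsilon}^{2}(\Ls_{\Lambda^{c}_{k}})}$ together with the Lipschitz estimate \eqref{eq: force_efficiency_2} resting on Assumption~\ref{ass: assump_on_derivs_V}. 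Since the nonzero entries of $\tilde{w}$ and $\hat{w}$ coincide with a subset of those of $w^{E_{k}}$ and have the same $\eps\,|R^{\mo}|$ magnitude, the value-norm bound $\|\tilde{w}\|_{\ell_{\varepsilon}^{2}(\Ls_{\Lambda^{c}_{k}})}\le 2\eps\,\eta^{\mo}_{k}$ of \eqref{eq: edge test func prop. 1} carries over immediately; the derivative-norm bound $\|\tilde{w}'\|_{\ell_{\varepsilon}^{2}(\Ls_{\Lambda^{c}_{k}})}\le 2\eta^{\mo}_{k}$ has to be re-checked by a short direct computation (zeroing out interior entries changes individual differences) but is again of the form constant times $\eta^{\mo}_{k}$, so the same constants $C^{\mo}_{1}(T)$ and $C^{\mo}_{2}(T)$ from \eqref{eq: mo eff. const. 1} are recovered.

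I expect the only delicate point to be the bookkeeping of the telescoping: one must confirm that truncating the support so that the test functions vanish at $\ell_{k}-1,\ell_{k},\ell_{k}+1$---rather than at $\ell_{k}$ alone, as in Lemma~\ref{lemma: mo eff continuum 1}---leaves exactly the two intended \emph{inner} squared residuals with no stray cross terms, and that the residuals $R^{\mo}_{\ell_{k}-3}$ and $R^{\mo}_{\ell_{k}+4}$ at the truncation points genuinely vanish. Both facts follow from the support structure of $R^{\mo}$ and the assumption $h_{T_{k}}\ge 6\eps$, exactly as in the model-residual estimate, so no essentially new idea is needed beyond the careful choice of the two test functions above.
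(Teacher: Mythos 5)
Your proposal is correct and follows essentially the same route as the paper: the paper's own proof also just rewrites the proof of Lemma~\ref{lemma: mo eff continuum 1} with modified edge test functions, and your $\tilde{w}$ and $\hat{w}$ coincide with the paper's choices once the vanishing of $R^{\mo}_{\ell_k-3}$ and $R^{\mo}_{\ell_k+4}$ is used. Your extra care in re-verifying the telescoping identities and the norm bounds $\|\tilde{w}\|_{\ell^2_\eps}\le 2\eps\,\eta^{\mo}_k$, $\|\tilde{w}'\|_{\ell^2_\eps}\le 2\eta^{\mo}_k$ is sound and fully consistent with the constants $C^{\mo}_1(T)$, $C^{\mo}_2(T)$ in \eqref{eq: mo eff. const. 1}.
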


\begin{proof}
The proof of \ref{lemma: mo eff continuum 2} is essentially an analogy to that of \ref{lemma: mo eff continuum 1} with the only modifications that the \emph{edge test function} $w^{E_k}$ is defined as
\begin{align*}
w^{E_k}_\ell=
\left\{
\begin{array}{l l}
-\eps (R^{\mo}_{\ell}+R^{\mo}_{\ell+1}), &\ell_{k}-3 \leq \ell \leq \ell_{k}-2,\\
\eps (R^{\mo}_{\ell}+R^{\mo}_{\ell+1}), & \ell_{k}+2 \le \ell  \le \ell_{k}+3,\\
0, &\text{ otherwise }, \\
\end{array} \right.
\end{align*}
for the proof of \eqref{eq: Eff_GJ_2} and
\begin{align*}
w^{E_k}_\ell =
\left\{
\begin{array}{l l}
-\eps (R^{\mo}_{\ell}+R^{\mo}_{\ell+1}), &\ell = \ell_{k}-3,\\
\eps (R^{\mo}_{\ell}+R^{\mo}_{\ell+1}), &\ell  = \ell_{k}+3,\\
0, &\text{ otherwise }, \\
\end{array} \right.
\end{align*}
for the proof of \eqref{eq: Eff_GJ_3}.
\end{proof}

The local efficiency of the model residual is then given by the following theorem.
\begin{theorem}
\label{thm: mo eff continuum}
For each $k \in\mathring{\Ks}^c$, we have
\begin{equation}
\label{eq: mo efficiency}
\eta^{\mo}_k \le 3 \sum_{T\subset\tom_k}C^{\mo}_{1}(T)\| y_{\a}'- y_{\ac}'\|_{\ell_{\varepsilon}^{2}(\Ls_{T})} + 3 \eps \sum_{T \subset \tom_k} C^{\mo}_{2}(T) \|f-\bar{f}_{T}\|_{\ell_{\varepsilon}^{2}(\Ls_{T})},
\end{equation}
where $C^{\mo}_{1}(T)$ and $C^{\mo}_2(T)$ are defined in \eqref{eq: mo eff. const. 1}.
\end{theorem}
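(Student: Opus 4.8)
The plan is to derive the bound on $\eta^{\mo}_k$ by assembling the three pointwise residual estimates already established in Lemmas~\ref{lemma: mo eff continuum 1} and~\ref{lemma: mo eff continuum 2}, so that the present theorem is essentially a bookkeeping step. First I would recall that, for $k \in \mathring{\Ks}^c$, the nodewise indicator is defined by $(\eta^{\mo}_k)^2 = \eps \sum_{\ell=\ell_k-2}^{\ell_k+3}(R^{\mo}_\ell)^2$; that is, it collects the squared model residuals over the six consecutive lattice sites $\ell_k-2,\ldots,\ell_k+3$. The crucial observation is that these six sites are exactly partitioned by the three lemma estimates into the complementary symmetric pairs $\{\ell_k,\ell_k+1\}$ (from Lemma~\ref{lemma: mo eff continuum 1}), $\{\ell_k-1,\ell_k+2\}$ and $\{\ell_k-2,\ell_k+3\}$ (the two parts of Lemma~\ref{lemma: mo eff continuum 2}), and that the right-hand side of each of the three estimates carries one and the same bracket, namely $\eta^{\mo}_k$ times $\sum_{T\subset\tom_k}C^{\mo}_{1}(T)\|y_{\a}'-y_{\ac}'\|_{\ell_{\varepsilon}^{2}(\Ls_T)} + \eps\sum_{T\subset\tom_k}C^{\mo}_{2}(T)\|f-\bar{f}_T\|_{\ell_{\varepsilon}^{2}(\Ls_T)}$.

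Next I would add the three inequalities. The left-hand sides sum to $\eps\sum_{\ell=\ell_k-2}^{\ell_k+3}(R^{\mo}_\ell)^2 = (\eta^{\mo}_k)^2$, while the right-hand sides sum to three copies of the common bracket, giving
\begin{equation*}
(\eta^{\mo}_k)^2 \le 3\,\eta^{\mo}_k\Big\{\sum_{T\subset\tom_k}C^{\mo}_{1}(T)\|y_{\a}'-y_{\ac}'\|_{\ell_{\varepsilon}^{2}(\Ls_T)} + \eps\sum_{T\subset\tom_k}C^{\mo}_{2}(T)\|f-\bar{f}_T\|_{\ell_{\varepsilon}^{2}(\Ls_T)}\Big\}.
\end{equation*}
Dividing through by $\eta^{\mo}_k$ then yields precisely the stated estimate \eqref{eq: mo efficiency}, with the factor $3$ arising directly from the three pairwise contributions.

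The only point that needs a word of care is the degenerate case $\eta^{\mo}_k = 0$: there the asserted bound holds trivially since its right-hand side is nonnegative, and otherwise $\eta^{\mo}_k>0$ so the division is legitimate. I do not expect any genuine obstacle at this stage — the entire difficulty of the model-residual efficiency has already been absorbed into the two preceding lemmas, specifically into the construction of the \emph{edge test functions} $w^{E_k}$ supported on $\Ls_{\Lambda^c_k}$, the Abel-transform/telescoping identity that converts $\sigma^{\a}_\ell(y_{\ac})(w^{E_k})'_\ell$ into a sum of squared residuals, and the invocation of the coarse-graining efficiency of Theorem~\ref{thm: local efficiency of the cg residual} to control the $\|\bar f_T\|$ contributions. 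The present theorem merely combines these symmetric estimates, so the whole argument reduces to summation over the three index pairs followed by cancellation of one power of $\eta^{\mo}_k$.
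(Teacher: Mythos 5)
Your proposal is correct and follows exactly the paper's own argument: the paper likewise obtains the theorem by combining the identity $(\eta^{\mo}_k)^2 = \eps\sum_{\ell=\ell_k-2}^{\ell_k+3}(R^{\mo}_\ell)^2$ with the three pairwise estimates of Lemmas~\ref{lemma: mo eff continuum 1} and~\ref{lemma: mo eff continuum 2} and cancelling one factor of $\eta^{\mo}_k$. Your explicit treatment of the degenerate case $\eta^{\mo}_k=0$ is a small point of added care that the paper leaves implicit.
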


\begin{proof}
By the identity $\eta^{\mo}_k =  \Big\{ \eps \sum_{\ell=\ell_{k}-2}^{\ell_{k+3}}\big(R^{\mo}_{\ell}\big)^{2}\Big\}^{\frac{1}{2}}$, \ref{lemma: mo eff continuum 1} and \ref{lemma: mo eff continuum 2}. The following theorem is then immediately obtained. 
\end{proof}

\subsubsection{Near the interface}
The efficiency estimate for the model residual near interfaces $\eta^{\mo}_{K_{1}-2}$ and $\eta^{\mo}_{K_{2}+2}$ is different from that inside the continuum region due to both the complexity of the formulation of $R^{\mo}_{\ell}$ at the interfaces and the wider support of $\eta^{\mo}_{K_{1}-2}$ and $\eta^{\mo}_{K_{2}+2}$. We hence give a special treatment to $\eta^{\mo}_{K_{1}-2}$ and $\eta^{\mo}_{K_{2}+2}$. For simplicity we only give the analysis in detail to $\eta^{\mo}_{K_{1}-2}$ and the analysis for $\eta^{\mo}_{K_{2}+2}$ is analogous.

We begin by separating $\eta^{\mo}_{K_{1}-2}$ into two parts:
\begin{equation}
\eta^{\mo}_{K_{1}-2} = \{(\eta^{\mo}_{\Lambda^{\c,\i}})^{2} + (\eta^{\mo}_{\Lambda^{\a,\i}})^{2}\}^{\frac{1}{2}},
\end{equation}
where
\begin{equation*}
\eta^{\mo}_{\Lambda^{\c,\i}}=(\eps \sum_{\ell=\ell_{K_{1}-2}-2}^{\ell_{K_{1}-2}}(R^{\mo}_{\ell})^{2})^{\frac{1}{2}}
\text{ and }
\eta^{\mo}_{\Lambda^{\a,\i}}=(\eps\sum_{\ell = \ell _{K_{1-1}}}^{\ell_{K_{1+2}}} (R^{\mo}_{\ell})^{2})^{\frac{1}{2}}.
\end{equation*}

The efficiency of $\eta^{\mo}_{\Lambda^{\c,\i}}$ is presented in the following theorem whose proof is the same as that of $\eta^{\mo}_k$ inside the continuum region with the only modification of $w^{E_k}$ which has support only on the left of the interface atom $\ell_{K_1-2}$, and is thus omitted.
\begin{theorem}
\label{thm: mo eff interface 1}
With the  definitions of sets of lattice indices
\begin{equation}
\Ls^{\leftSup}_{\Lambda^{\a,\i}}:=\{\ell_{k}\}_{k=K_{1}-1}^{K_{1}+2} ~\text{ and }~ \ \Ls^{\leftSup}_{\Lambda^{\i}}:=\Ls_{T_{K_{1}-2}}\cup\Ls^{\leftSup}_{\Lambda^{\a,\i}},   \nonumber \\
\end{equation}
we have
\begin{equation}
 \label{eq: eff int step 1}
\eta^{\mo}_{\Lambda^{\c,\i}} \leq 3C^{\mo}_{1}(T_{K_1-2})\| y_{\a}'- y_{\ac}'\|_{\ell_{\varepsilon}^{2}(\Ls^{\leftSup}_{\Lambda^{\i}})}+ 3 \eps C^{\mo}_{2}(T_{K_1-2}) \|f-f_{T_{K_{1}-2}}\|_{\ell_{\varepsilon}^{2}(\Ls^{\leftSup}_{\Lambda^{\i}})}.
\end{equation}
\end{theorem}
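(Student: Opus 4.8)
The plan is to transcribe the argument behind Lemma \ref{lemma: mo eff continuum 1} and Theorem \ref{thm: mo eff continuum}, replacing the symmetric edge test function $w^{E_k}$ by a \emph{one-sided} variant supported entirely on the continuum side of the interface atom $\ell_{K_1-2}$. The point is that $\eta^{\mo}_{\Lambda^{\c,\i}}$ collects only the three residuals $R^{\mo}_{\ell_{K_1-2}-2}$, $R^{\mo}_{\ell_{K_1-2}-1}$, $R^{\mo}_{\ell_{K_1-2}}$, all of which sit in the continuum region where $\sigma^{\ac}_\ell(y_{\ac}) = W'(\nabla y_{\ac}|_{T_{K_1-2}})$; keeping the test function to the left of $\ell_{K_1-2}$ ensures that we never difference against the interface residuals, whose formulation is different. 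I would therefore take $w^{E}_\ell := \pm\eps(R^{\mo}_\ell + R^{\mo}_{\ell+1})$ on a short window ending at $\ell_{K_1-2}$ and $w^{E}_\ell := 0$ otherwise, with the signs chosen so that a single summation by parts reproduces $\eps\sum_{\ell=\ell_{K_1-2}-2}^{\ell_{K_1-2}}(R^{\mo}_\ell)^2 = (\eta^{\mo}_{\Lambda^{\c,\i}})^2$.

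With this test function in hand, I would first apply the Abel transform to $\eps\sum \sigma^{\a}_\ell(y_{\ac})(w^{E})'_\ell$ as in \eqref{eq: Abel Trans 1}; since $w^{E}$ vanishes outside its window and $R^{\mo}_\ell \equiv 0$ for $\ell \in \mrLs_{T_{K_1-2}}$, the telescoping collapses to $(\eta^{\mo}_{\Lambda^{\c,\i}})^2$. Next, mirroring \eqref{eq: model eff step 2}, I would add and subtract $\eps\sum_{\ell\in\Ls} f_\ell w^{E}_\ell$ and invoke the weak atomistic formulation \eqref{A_firstV}, splitting the right-hand side into a force term $\eps\sum f_\ell w^{E}_\ell$ and a stress-mismatch term $\eps\sum[\sigma^\a_\ell(y_{\ac}) - \sigma^\a_\ell(y_\a)](w^{E})'_\ell$.

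For the force term I would introduce the average $\bar{f}_{T_{K_1-2}}$, split by the triangle inequality, and bound $\|\bar{f}_{T_{K_1-2}}\|_{\ell^{2}_{\eps}(\Ls_{T_{K_1-2}})}$ by the coarse-graining efficiency estimate \eqref{Eff_EF} of Theorem \ref{thm: local efficiency of the cg residual}; this is precisely what manufactures the terms $3C^{\mo}_1(T_{K_1-2})\|y'_\a - y'_{\ac}\|_{\ell^{2}_{\eps}(\Ls^{\leftSup}_{\Lambda^{\i}})}$ and $3\eps C^{\mo}_2(T_{K_1-2})\|f - \bar{f}_{T_{K_1-2}}\|_{\ell^{2}_{\eps}(\Ls^{\leftSup}_{\Lambda^{\i}})}$ after controlling $\|w^{E}\|_{\ell^{2}_{\eps}}$ by a multiple of $\eta^{\mo}_{\Lambda^{\c,\i}}$ in the manner of \eqref{eq: edge test func prop. 1}. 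For the stress-mismatch term I would bound $\|(w^{E})'\|_{\ell^{2}_{\eps}}$ by a multiple of $\eta^{\mo}_{\Lambda^{\c,\i}}$ and apply the nonlocal Lipschitz estimate \eqref{eq: force_efficiency_2}, which itself rests on Assumption \ref{ass: assump_on_derivs_V}. Dividing through by the common factor $\eta^{\mo}_{\Lambda^{\c,\i}}$ and absorbing the residual terms into the constant $3$ exactly as in Theorem \ref{thm: mo eff continuum} then yields \eqref{eq: eff int step 1}.

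The main obstacle --- and the reason the norms on the right are taken over the enlarged set $\Ls^{\leftSup}_{\Lambda^{\i}} = \Ls_{T_{K_1-2}} \cup \{\ell_k\}_{k=K_1-1}^{K_1+2}$ rather than over $\Ls_{T_{K_1-2}}$ alone --- is the nonlocality of $\sigma^\a_\ell$. Although $w^{E}$ lives only on the continuum side, the finite-difference stencil in $\sigma^\a_\ell(y_{\ac}) - \sigma^\a_\ell(y_\a)$ for $\ell$ near $\ell_{K_1-2}$ reaches across the interface, so \eqref{eq: force_efficiency_2} unavoidably sees the error $y'_\a - y'_{\ac}$ at the atomistic and interface nodes $\ell_{K_1-1}, \ldots, \ell_{K_1+2}$. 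The delicate bookkeeping is to track exactly which atoms enter each stress difference and to confirm that the one-sided summation by parts leaves no uncontrolled boundary contribution at the interface; once that is settled, the estimate is a direct adaptation of the continuum-region proof.
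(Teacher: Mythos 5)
Your overall plan is exactly the paper's: the authors omit this proof, remarking only that it is the same as the continuum-region argument with $w^{E_k}$ modified to have support only to the left of the interface atom $\ell_{K_1-2}$, and your outline (one-sided edge test functions, Abel summation, the force/stress-mismatch split, the coarse-graining efficiency bound \eqref{Eff_EF}, the Lipschitz estimate \eqref{eq: force_efficiency_2}, and the enlarged set $\Ls^{\leftSup}_{\Lambda^{\i}}$ forced by the nonlocal stencil of $\sigma^{\a}_\ell$) reproduces that plan faithfully.

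There is, however, one concrete step that fails as you state it: no choice of signs makes a \emph{single} test function $w^{E}_\ell=\pm\eps(R^{\mo}_\ell+R^{\mo}_{\ell+1})$ telescope to the full sum $\eps\sum_{\ell=\ell_{K_1-2}-2}^{\ell_{K_1-2}}(R^{\mo}_\ell)^{2}$. Writing $R_j:=R^{\mo}_{\ell_{K_1-2}+j}$, letting $s_j\in\{\pm1\}$ be the sign at site $\ell_{K_1-2}+j$, and using $R_{-3}=0$, summation by parts over the window $\{\ell_{K_1-2}-3,\ldots,\ell_{K_1-2}-1\}$ yields $\eps\bigl[(s_{-2}-s_{-3})R_{-2}^{2}+(s_{-1}-s_{-2})R_{-1}^{2}-s_{-1}R_{0}^{2}\bigr]$; the first two coefficients are even (they lie in $\{-2,0,2\}$) while the third is $\pm1$, so the three coefficients can never all be equal, let alone all equal to $1$. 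The repair is the structure the paper's continuum argument already uses: \emph{three} separate one-sided test functions, supported on $\{\ell_{K_1-2}-3,\ldots,\ell_{K_1-2}-1\}$, on $\{\ell_{K_1-2}-3,\ell_{K_1-2}-2\}$, and on $\{\ell_{K_1-2}-3\}$, which isolate $\eps(R^{\mo}_{\ell_{K_1-2}})^{2}$, $\eps(R^{\mo}_{\ell_{K_1-2}-1})^{2}$ and $\eps(R^{\mo}_{\ell_{K_1-2}-2})^{2}$ respectively; these are the one-sided analogues of Lemma \ref{lemma: mo eff continuum 1} and of the two cases of Lemma \ref{lemma: mo eff continuum 2}. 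Each of the three estimates is bounded by $\eta^{\mo}_{\Lambda^{\c,\i}}$ times the common right-hand side, and summing them and dividing by $\eta^{\mo}_{\Lambda^{\c,\i}}$ is precisely what produces the factor $3$ in \eqref{eq: eff int step 1}; a single telescoping test function, if it existed, would give constant $1$, which should itself have been a warning sign. Note also that the supports must stop strictly before $\ell_{K_1-2}$: if $w^{E}_{\ell_{K_1-2}}\neq0$, the summation by parts produces the difference $\sigma^{\a}_{\ell_{K_1-2}}(y_{\ac})-\sigma^{\a}_{\ell_{K_1-2}+1}(y_{\ac})$, which is no longer $R^{\mo}_{\ell_{K_1-2}}-R^{\mo}_{\ell_{K_1-2}+1}$ because $\sigma^{\ac}_\ell$ jumps from $W'(\nabla y_{\ac}|_{T_{K_1-2}})$ to $\bar{\sigma}^{\ac}_{K_1-1}(y_{\ac})$ across that node, and $w^{E}_{\ell_{K_1-2}}$ itself would contain the interface residual $R^{\mo}_{\ell_{K_1-1}}$ belonging to $\eta^{\mo}_{\Lambda^{\a,\i}}$, which you rightly want to keep out. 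With these two repairs your outline coincides with the paper's intended proof.
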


We then turn our attention to $\eta^{\mo}_{\Lambda^{\a,\i}}$, whose efficiency is given by the following theorem.
\begin{theorem}
\label{thm: mo eff interface 2}
The efficiency of the model residual on the interface $\eta^{\mo}_{\Lambda^{\a,\i}}$ is given by
\begin{equation}
 \label{eq: eff int step 2}
\eta^{\mo}_{\Lambda^{\a,\i}}\leq 3M^{\NN}_{2} \|y_{\a}' - y_{\ac}'\|_{\ell_{\varepsilon}^{2}(\Ls^{\leftSup}_{\Lambda^{\i}})}.
\end{equation}
\end{theorem}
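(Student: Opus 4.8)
The plan is to bound $\eta^{\mo}_{\Lambda^{\a,\i}}$ by a direct, pointwise estimate of the residuals $R^{\mo}_\ell$ for $\ell \in \{\ell_{K_1-1},\ell_{K_1},\ell_{K_1+1},\ell_{K_1+2}\}$, in the spirit of the interior bound \eqref{eq: force_efficiency_2}, rather than through an edge test function as in Lemma \ref{lemma: mo eff continuum 1}. A direct estimate is natural here because $\Ls^{\leftSup}_{\Lambda^{\a,\i}}$ lies in the atomistically resolved zone, where coarse-graining is absent and $R^{\mo}_\ell=\sigma^{\a}_\ell(y_{\ac})-\sigma^{\ac}_\ell(y_{\ac})$ is a pure stress discrepancy; this is why we expect no data-oscillation term to survive, in contrast to Theorem \ref{thm: mo eff interface 1}.

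First I would make the interface discrepancy explicit. By \eqref{eq: interface site energy}, the reconstructed site energy $V^{\i}_\ell$ is simply $V$ evaluated at an argument whose continuum-pointing next-nearest-neighbour slot (the $D_{-2}y_\ell$ entry near the left interface) has been replaced by its Cauchy--Born surrogate $2D_{-1}y_\ell$. Differentiating the coupling energy, each $\sigma^{\ac}_\ell(y_{\ac})$ appearing in $R^{\mo}_\ell$ differs from $\sigma^{\a}_\ell(y_{\ac})$ only through finitely many terms of the form $\partial_i V(Dy_{\ac,m})-\partial_i V(\widehat{Dy_{\ac,m}})$, whose two arguments agree except in the reconstructed slots. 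Applying the fundamental theorem of calculus to each term pulls out $\partial_{ij}V$, integrated along the connecting segment, multiplied by the surrogate discrepancies $D_{-2}y_{\ac,m}-2D_{-1}y_{\ac,m}$ (and their right-interface analogues). These discrepancies are exactly the quantities that vanish on uniform deformations, which is the content of the patch-test condition \eqref{eq: consistency}.

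Next I would rewrite each surrogate discrepancy $D_{-2}y_{\ac,m}-2D_{-1}y_{\ac,m}$ as a difference of two consecutive discrete gradients of $y_{\ac}$ and compare it with the corresponding gradients of the true solution $y_{\a}$. Because $y_{\a}$ solves the atomistic problem and the interface sits in the atomistically resolved region, these gradient jumps of $y_{\ac}$ telescope, against $y_{\a}$, into the error $y'_{\a}-y'_{\ac}$ sampled over $\Ls^{\leftSup}_{\Lambda^{\i}}$; this is precisely why the error norm on the right of \eqref{eq: eff int step 2} is taken over $\Ls^{\leftSup}_{\Lambda^{\i}}$ (including the adjacent continuum element $T_{K_1-2}$, where $y'_{\ac}$ is constant) and not merely over the interface sites. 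Finally I would invoke Assumption \ref{ass: assump_on_derivs_V}: the nearest-neighbour second derivatives contribute the factor $M^{\NN}_2$, whereas the genuine cross and next-nearest-neighbour derivatives indexed by $\Ss^{\NNN_1},\Ss^{\NNN_2},\Ss^{\NNN_3}$ are smaller by factors $\tfrac{1}{50}$ or $\tfrac{1}{10}$ and are absorbed; counting the contributing bonds at the four nodes yields the constant $3M^{\NN}_2$, and summing the squares produces the stated $\ell^2_\eps$ bound.

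The main obstacle I anticipate is the third step. The residual is manifestly a function of $y_{\ac}$ alone and, a priori, only controls the non-uniformity (second differences) of $y_{\ac}$; the crux is to convert this into the genuine error $y'_{\a}-y'_{\ac}$. This requires using the atomistic equilibrium to discard the true solution's own second differences and a careful, bond-by-bond accounting across the asymmetric interface stencil — whose nonlocal reach crosses between $\Cs$, $\Is$ and $\As$ — in order to land on the sharp constant $3M^{\NN}_2$ without generating spurious oscillation terms.
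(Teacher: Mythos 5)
Your proposal contains a genuine gap at its central step. Your first two steps expand $R^{\mo}_\ell=\sigma^{\a}_{\ell}(y_{\ac})-\sigma^{\ac}_{\ell}(y_{\ac})$ pointwise, via the mean value theorem, into next-nearest-neighbour second derivatives of $V$ multiplied by gradient jumps of $y_{\ac}$ near the interface. But those gradient jumps are \emph{not} controlled by $\|y_{\a}'-y_{\ac}'\|_{\ell^2_\eps}$: writing a jump of $y_{\ac}$ as a jump of the error plus a jump of $y_{\a}$, the second contribution is the true solution's own non-uniformity, which is nonzero (it is induced by the external force through the atomistic equilibrium) and, weighted by the NNN derivatives, is precisely the consistency error of the coupling model evaluated at $y_{\a}$. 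No pointwise argument can absorb this term into the error; using the atomistic equilibrium in strong form only converts it into a data-dependent quantity of size $O(\eps f)$, which would leave a force term in the bound that the theorem does not have. The exact disappearance of all force contributions is achieved in the paper only under a \emph{weak} pairing: one tests the residual against the specially built interface test function $w^{\interface}_\ell=\eps\sum_{j=\ell_{K_1-1}}^{\ell}R^{\mo}_j$ (a discrete antiderivative of the residual, supported on interface/atomistic nodes), and then invokes \emph{both} equilibria --- the coupling weak form \eqref{QC_firstV}, whose nodal forces satisfy $\bar f_{T_k}=f_{\ell_k}$ there because the mesh coincides with the lattice, and the atomistic weak form \eqref{A_firstV}. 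The force terms then cancel identically, leaving $\eps\sum_\ell[\sigma^{\a}_\ell(y_{\ac})-\sigma^{\a}_\ell(y_{\a})](w^{\interface})_\ell'$, i.e.\ a difference of the \emph{same} stress at two configurations, which is Lipschitz-bounded by the error. The telescoping you allude to, made precise, is exactly this summation-by-parts/test-function mechanism, and it is missing from your plan; you name the obstacle but do not supply the device that overcomes it.

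A second, related symptom that the route does not match the result is your accounting of the constant. In your expansion the weights are the cross and NNN derivatives indexed by $\Ss^{\NNN_1},\Ss^{\NNN_2},\Ss^{\NNN_3}$, which Assumption \ref{ass: assump_on_derivs_V} forces to be at most $\tfrac{1}{50}M_2^{\NN}$ (or smaller); ``counting bonds'' on such terms cannot produce $3M_2^{\NN}$. In the paper the factor $3M_2^{\NN}$ arises from the estimate $\|\sigma^{\a}(y_{\ac})-\sigma^{\a}(y_{\a})\|_{\ell^{2}_{\eps}}\le 3M^{\NN}_2\|y_{\a}'-y_{\ac}'\|_{\ell^{2}_{\eps}}$, i.e.\ from comparing the atomistic stress at two configurations, where the nearest-neighbour derivatives dominate --- a structurally different source than the model discrepancy you expand. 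Your intuition about why no data-oscillation term appears (atomistic resolution at the interface) is correct in outcome, but the actual reason is the exact identification $\bar f_{T_k}=f_{\ell_k}$ that makes the two weak forms share the same right-hand side on the support of $w^{\interface}$.
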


\begin{proof}
We first construct the interface test function $w^{\interface}\in\Us^{\eps}$ satisfying
\begin{align*}
w^{\interface}_\ell=
\left\{
\begin{array}{l l}
\eps \sum_{j=\ell_{K_{1}-1}}^{\ell}R^{\mo}_{j}, &\ell\in\{\ell_{K_{1}-1},...,\ell_{K_{1}+2}\},\\
0, &\text{ otherwise }.
\end{array} \right.
\end{align*}
Noticing that
\begin{align*}
(w^{\interface})_\ell'=
\left\{
\begin{array}{l l}
R^{\mo}_{\ell}, &\ell\in\{\ell_{K_{1}-1},...,\ell_{K_{1}+2}\},\\
-\sum_{\ell=\ell_{K_{1}-1}}^{\ell_{K_{1}+2}}R^{\mo}_{\ell}, &\ell=\ell_{K_{1}+3},\\
0, &\text{ otherwise },
\end{array} \right.
\end{align*}
we consequently  have
\begin{equation}
 \label{eq: int int eff part 1}
\varepsilon\sum_{\ell=\ell_{K_{1}-1}}^{\ell_{K_{1}+2}}R^{\mo}_{\ell} (w^{\interface})_\ell' = \varepsilon\sum_{\ell=\ell_{K_{1}-1}}^{\ell_{K_{1}+2}}(R^{\mo}_{\ell})^{2}=(\eta^{\mo}_{\Lambda^{\a,\i}})^{2}.
\end{equation}

The key observation is that $\Ts_h$ and $\Ts_\eps$ coinside at the interface and atomistic region which implies that $\bar{f}_{T_k}=f_{\ell_{k}}$ when $k \in \{K_{1}-1,...,K_{1}+3\}$. Together with the definition of $R^{\mo}$, the following identity holds
\begin{align}
&\varepsilon\sum_{\ell=\ell_{K_{1}-1}}^{\ell_{K_{1}+2}}R^{\mo}_{\ell} (w^{\interface})_\ell' \notag \\
=&\varepsilon\sum_{\ell\in\Ls}R^{\mo}_{\ell} (w^{\interface})_\ell'\notag\\
=&\varepsilon\sum_{k\in\Ks^{ac}}(\sigma^{\a}_{\ell_{k}}(y_{\ac})-\bar{\sigma}^{ac}_{k}(y_{\ac})) (w^{\interface})_\ell'
+\eps \sum_{k=K_{1}-1}^{K_{1}+3}(\bar{f}_{T_k}-f_{\ell_{k}})w^{\interface}_{\ell_k}\notag\\
=&\varepsilon\sum_{ \ell \in \Ls}\big(\sigma^{\a}_{\ell_{k}}(y_{\ac}) (w^{\interface})_\ell' - f_\ell w^{\interface}_\ell \big)
+ \varepsilon\sum_{k\in\Ks^{ac}} \big(\bar{f}_{T_k}w^{\interface}_{\ell_{k}} -\bar{\sigma}^{ac}_{k}(y_{\ac}) (w^{\interface})'_{\ell_{k}} \big) ,
\label{eq: model res. eff. interface}
\end{align}
where $\bar{\sigma}^{ac}_{k}(y_{\ac})$ is defined in \eqref{eq: stress tensor ac elem wise}. By the weak formula of a/c coupling model problem \eqref{QC_firstV}, the second term in \eqref{eq: model res. eff. interface} vanishes, and by the atomistic problem  \eqref{A_firstV} and \eqref{eq: int int eff part 1},
\begin{align}
(\eta^{\mo}_{\Lambda^{\a,\i}})^{2}
=\varepsilon\sum_{\ell=\ell_{K_{1}-1}}^{\ell_{K_{1}+2}}(\sigma^{\a}_{\ell_{k}}(y_{\ac})-\sigma^{\a}_{\ell}(y_{\a}))
\leq &\|\sigma^{\a}(y_{\ac})-\sigma^{\a}(y_{\a})\|_{\ell_{\varepsilon}^{2}(\Ls^{\leftSup}_{\Lambda^{\i}})}\| (w^{\interface})'\|_{\ell_{\varepsilon}^{2}(\Ls^{\leftSup}_{\Lambda^{\i}})}\notag\\
\leq &3M^{\NN}_{2} \eta^{\mo}_{\Lambda^{\a,\i}} \| y_{\a}' -  y_{\ac}'\|_{\ell_{\varepsilon}^{2}(\Ls^{\leftSup}_{\Lambda^{\i}})},
 \label{eq: int int eff part 2}
\end{align}
which reveals the stated result by dividing both sides by $\eta^{\mo}_{\Lambda^{\a,\i}}$.
\end{proof}

Combining \eqref{eq: eff int step 1} and \eqref{eq: eff int step 2}, the efficiency of $\eta^{\mo}_{K_{1}-2}$ is given by
\begin{align}
\eta^{\mo}_{K_{1}-2}\leq 3(C^{\mo}_{1}(T_{K_1-2})+M^{\NN}_{2})& \| y_{\a}' -  y_{\ac}'\|_{\ell_{\varepsilon}^{2}(\Ls^{\leftSup}_{\Lambda^{\i}})} \notag \\
+&3\eps C^{\mo}_{2}(T_{K_1-2}) \|f-f_{T_{K_{1}-2}}\|_{\ell_{\varepsilon}^{2}(\Ls^{\leftSup}_{\Lambda^{\i}})}.
 \label{eq: eff int total}
\end{align}

We also present the efficiency estimate for $\eta^{\mo}_{K_{2}+2}$ as
\begin{align}
\eta^{\mo}_{K_{2}+2}\leq 3(C^{\mo}_{1}(T_{K_2+3})+ M^{\NN}_{2}) &\|y_{\a}' -  y_{\ac}'\|_{\ell_{\varepsilon}^{2}(\Ls^{\rightSup}_{\Lambda^{\i}})} \notag \\
+&3\eps C^{\mo}_{2}(T_{K_2+3})\|f-f_{T_{K_{2}+3}}\|_{\ell_{\varepsilon}^{2}(\Ls^{\rightSup}_{\Lambda^{\i}})},
\end{align}
where $\Ls^{\rightSup}_{\Lambda^{\i}}:=\Ls_{T_{K_{2}+3}}\cup\{\ell_{k}\}_{k=K_{2}-1}^{K_{2}+2}$, for the completeness of our analysis.

\begin{remark}
The proof of the efficiency of the model residual is subtle and is novel to the best knowledge of the authors. It is different from that of the gradient jump residual in Poisson equation (c.f. \cite[Lemma 1.3 and Equation 1.24]{VerfurthAPosteriori}), which is because of the different origins of the two residuals. The key of the proofs is the construction of the \emph{continuum edge test function} $w^{E_k}$ and the \emph{interface edge test function} $w^{\interface}$ which essentially incorporate the change of models as well as the discreteness of the underlying problems.
\end{remark}

\subsection{Comments for the Efficiency of the Residual Based A Posteriori Error Estimator}
\label{Sec: Discussion Efficiency}
Having the local efficiency of the residual is given in \ref{thm: local efficiency of the cg residual}, \ref{thm: mo eff continuum} \ref{thm: mo eff interface 1} and \ref{thm: mo eff interface 2}, the following comments can be made to help better understand our results.

We note that to establish the efficiency of the error estimator, we need to divide the stability constant $c_a(y_{\ac})$ on both sides of the estimate. By a detailed a posteriori stability estimate, c.f. \cite{CO_HW_ACC_1D_A_Posteriori, JH_XL_HW_a_Post_Contr_Mo_Adap_17}, we have that $M_2^{\NN}/c_a(y_{\ac})$ is of $\Os(1)$. Combining the above theorems, we conclude that the residual based error estimator locally provides a lower bound for the true error in the sense that
\begin{equation}
\frac{1}{c_a(y_{\ac})}\big\{(\eta^{\mo}_{T_k})^2 + (\eta^{\cg}_{T_k})^2\big\}^{\frac{1}{2}}
\le \overline{C} \Big\{   \sum_{j= k-1}^{k+1} \|y_{\a}'- y_{\ac}'\|_{\ell_{\varepsilon}^{2}(\Ls_{T_j})}+  \sum_{j= k-1}^{k+1} h_{T_j} \|f-\bar{f}_{T}\|_{\ell_{\varepsilon}^{2}(\Ls_{T_j})} \Big\},
\end{equation}
with certain modification at the a/c interface and $\overline{C}$ only depends on the mesh regularity but (almost) not on $y_{\ac}$. The estimates of the constants may be sharper if we specify the interaction potential. However, we decide to keep the generic formulation of $V$ so that our analysis can be applied to a large class of energy-based a/c method as long as it preserves the basic structure of the variational formulation.

We do not expect the so-called asymptotic exactness to hold (which means the error estimator asymptotically equals to the true error as mesh size tends to zero \cite{Babuska_Rheinboldt_81})  in our problem. This is due to the generic formulation of our atomistic model which is nonlinear, nonlocal and discrete and introduces even larger discrepancy with the coupling problem for the stress tensors as the mesh is refined towards the underline lattice. However, since the model adaptivity is imposed when the mesh size becomes small, we observe an efficiency factor (error estimator divided by the actual error) being almost constant in our numerical experiments, c.f. Fig. \ref{Fig:Efficiency_Factor_Gradient} in Section \ref{Sec: Numeric Exp}.



\section{The hybrid error estimator for A/C coupling method}
\label{Sec: Gradient Recover EE}

Having established the efficiency of the residual based error estimator, we turn our attention to the \emph{gradient recovery} type of a posteriori error estimator which is popular in the engineering and scientific computing community. Such popularity is due to the simplicity of implementation of the gradient recovery estimator which only depends on the computed solution but not any \emph{a priori} knowledge of the external load. The gradient recovery estimator was first introduced in \cite{Zienkiewicz_Zhu_Error_Estimator_1987} for the adaptive finite element solution for Poisson equation in 2D and the application of the gradient recovery estimator in a/c coupling problem dates back to \cite{MO_EBT_QC_Adaptive_99}.

In the present section, we first derive the classical gradient recovery error estimator with ajustment to the underline coupling method and prove its equivalence with the coarse-graining residual and the model residual respectively in the continuum region. We then combine the classical gradient recovery error estimator in the continuum region and our model residual on the interface together to give a new a posteriori error estimator which only depends the computed solution $y_{\ac}$ (or $u_{\ac}$).

\subsection{Construction of the classical gradient recovery error estimator}
\label{Sec: Constr. GR EE}

To derive the gradient recovery error estimator, we first define a mesh-dependent scalar product $(\cdot,\cdot)_{h}$ on $\Ps_1(\Ts_h)$ by
\begin{align}
(v_h,w_h)_{h}&:= \sum_{k \in \Ks^{c} \cup \{K_{1}-1\}} \frac{h_{T_k}}{2} \big\{\sum_{\ell\in\mathcal{N}_{T_k}}v_{h}|_{T_k}(\eps\ell)w_{h}|_{T_k}(\eps\ell)\big\}, \ \forall v_h, w_h \in \Ps_1(\Ts_h),
  \label{inner_elementwise}
\end{align}
where $\psi|_{T_k}(x) := \lim_{t \rightarrow x; t \in T_k}\psi(t)$ and $\Ns_T$ denotes the indices of the two nodes associated with the element $T$.
With the definition $h_{\tilde{\omega}_k}:= \frac{h_{T_k}+h_{T_{k+1}}}{2}$, we can rewrite \eqref{inner_elementwise} in the nodewise form
\begin{align}
(v_h,w_h)_{h}&:=\sum_{k \in \Ks^c} h_{\tilde{\omega}_k} v^{h}_{\ell_k}w^{h}_{\ell_k} + \frac{\eps}{2}\sum_{k \in\{K_{1}-1,K_{2}+1\}}v^h_{\ell_k} w^h_{\ell_k}, \ \forall v_h, w_h \in \Ps_1(\Ts_h).
 \label{inner_nodewise}
\end{align}

We define $Gu_{h} \in \Us^h$ by
\begin{equation}
 \label{projection_property}
(Gu_{h},v_h)_{h}=(\nabla u_{h},v_h)_{h},~\forall v_h \in \Ps_1(\Ts_h).
\end{equation}
By \eqref{inner_nodewise} and \eqref{projection_property}, the nodal values of $Gu_{\ac}$ are given by
\begin{align}
\label{eq: expression of Gu_h}
Gu^{\ac}_{\ell_{k}}=
\left\{
\begin{array}{l l}
\sum_{p \in \{k, k+1\}} \frac{h_p}{h_p+h_{p+1}} \nabla u_{\ac}|_{{T_p}}, &k \in  \Ks^c,\\
{\nabla u_{\ac}}|_{T_{k}},
&k ={K_{1}-1},\\
{\nabla u_{\ac}}|_{T_{k+1}},
&k={K_{2}+1}.\\
\end{array} \right.
\end{align}
The operator $G$ can be extended to $y \in \Ys_h$ with the same definition as in \eqref{eq: expression of Gu_h}.

The gradient recovery error estimator is then defined by
\begin{equation}
(\eta^{\z})^2:=\sum_{k \in \Ks^c_{\Ts_h} }(\eta^{\z}_{T_k})^{2},
\label{eq: zz est definition}
\end{equation}
with the \emph{elementwise} contribution
\begin{align}
\label{eq: zz est elementwise}
\eta^{\z}_{T_k}=
\left\{
\begin{array}{l l}
\|Gu_{\ac}-\nabla u_{\ac}\|_{\ell_{\varepsilon}^{2}(\Ls_{T_k})} \equiv \|Gy_{\ac}-\nabla y_{\ac}\|_{\ell_{\varepsilon}^{2}(\Ls_{T_k})},
&k \in \mathring{\Ks}^c_{\Ts_h},\\
\|Gu_{\ac}-\nabla u_{\ac}\|_{\ell_{\varepsilon}^{2}(\cup_{j = 0}^{4}\Ls_{T_{k+i}})} \equiv \|Gy_{\ac}-\nabla y_{\ac}\|_{\ell_{\varepsilon}^{2}(\cup_{j = 0}^{1}\Ls_{T_{k+i}})},
&k={K_{1}-2},\\
\|Gu_{\ac}-\nabla u_{\ac}\|_{\ell_{\varepsilon}^{2}(\cup_{j = 0}^{4}\Ls_{T_{k-i}})} \equiv \|Gy_{\ac}-\nabla y_{\ac}\|_{\ell_{\varepsilon}^{2}(\cup_{j = 0}^{1}\Ls_{T_{k-i}})},
&k={K_{2}+3}.\\
\end{array} \right.
\end{align}
The identity holds by $y_{\ac} - y_{\a} = u_{\ac} - u_\a$ as $u_{\cp}(x) = y_{\cp}(x) - Fx, \cp \in \{\a, \ac\}$.

Using \eqref{projection_property} \eqref{inner_elementwise} and \eqref{inner_nodewise}, we can derive an equivalent \emph{nodewise formulation} of the gradient recovery estimator, which will be used in the analysis, is given by
\begin{align}
&(G u_{\ac}- \nabla u_{\ac},Gu_{\ac}-\nabla u_{\ac})_{h}\nonumber \\
=& (\nabla u_{\ac},\nabla u_{\ac})_{h}-(G u_{\ac},Gu_{\ac})_{h} \nonumber \\
=& \sum_{k \in \Ks^c}\sum_{T\subset\widetilde{\omega}_{k}}\frac{|T|}{2}(\nabla u_{\ac}|_{T})^2  + \frac{\eps}{2}(\nabla u_{\ac}|_{T_{K_{1}-1}})^2 + \frac{\eps}{2}(\nabla u_{\ac}|_{T_{K_{2}+2}})^2 \nonumber \\
  &- \sum_{k \in \Ks^c}\frac{1}{4h_{\tilde{\omega}_k}} \Big(\sum_{T\subset\widetilde{\omega}_{k}}|T| \nabla u_{\ac}|_{T}\Big)^{2} + \frac{\eps}{2}(\nabla u_{\ac}|_{T_{K_{1}-1}})^2 + \frac{\eps}{2}(\nabla u_{\ac}|_{T_{K_{2}+2}})^2 \nonumber \\
=&\sum_{k \in \Ks^c}  \Big[ (\frac{h_{T_k}h_{T_{k+1}}}{4h_{\tilde{\omega}_k}})^{\frac{1}{2}} \big| \nabla u_{\ac}|_{T_{k+1}}  - \nabla u_{\ac}|_{T_k} \big| \Big]^2 \nonumber \\
\equiv& \sum_{k \in \Ks^c} (\frac{h_{T_k}h_{T_{k+1}}}{4h_{\tilde{\omega}_k}}) \big|  \nabla y_{\ac}|_{T_{k+1}} - \nabla y_{\ac}|_{T_k} \big|^2
=:\sum_{k \in \Ks^c} (\eta^{\z}_{k})^2,
\label{eq: zz est nodewise}
\end{align}
It can be shown that (see Appendix \ref{Proof_3})
\begin{equation}
\frac{1}{2} (\eta^{\z})^2 \le \sum_{k \in \Ks^c} (\eta^{\z}_{k})^2 \le 3  (\eta^{\z})^2.
\label{eq: zz node-element equiv}
\end{equation}

\begin{remark}
\begin{enumerate}
\item The definition of the gradient recovery operator $G$ is identical to the that for Poisson equation in the continuum region (c.f. Chapter 1.5 \cite{VerfurthAPosteriori}) but is modified near the interface since the solution in the atomistic region does not contribute to the residual based error estimator.
\item Since the values of $Gu_{\ac}$ (or $Gy_{\ac}$) is not specified in the atomistic region and the interface region, we simply understand $Gu_{\ac}$ (or $Gy_{\ac}$) as one of the elements in $\Us^h$ (or $\Ys_h$) that satisfy \eqref{eq: expression of Gu_h}.
\item As we did for the residual based estimator, we include all the interface influence in the second elements in the continuum region as shown in the last two cases in \eqref{eq: zz est elementwise}.
\end{enumerate}
\end{remark}

\subsection{Equivalence of the coarse graining residual and the gradient recovery error estimator}
\label{Sec: Equivalence}
We prove the equivalence of the gradient recovery error estimator and the coarse-graining residual. With the help of the definition of the \emph{nodewise contribution} of the gradient recovery estimator, we first present the following lemma showing the equivalence of the jumps of the stress tensor and the coarse-graining residual:

\begin{lemma}
\label{lemma: cg equivalence Q^ac}
Let $\bar{f}_{\tom_k}$ be an weighted average of the external force on $\tom_k$ defined by
\begin{equation}
\bar{f}_{\tom_k} =
\left\{
\begin{array}{l l}
 \sign\Big(\frac{h_{T_k} \bar{f}_{T_k} + h_{T_{k+1}} \bar{f}_{T_{k+1}}}{h_{T_k} + h_{T_{k+1}}} \Big)
\frac{h_{T_k} |\bar{f}_{T_k}| + h_{T_{k+1}} |\bar{f}_{T_{k+1}}|}{h_{T_k} + h_{T_{k+1}}}, &k \in \mathring{\Ks}^c,\\
\bar{f}_{T_k}, & k = K_1-2,\\
\bar{f}_{T_{k+1}}, & k = K_2+2. 
\end{array} \right.
\end{equation}
Assume the mesh is regular such that there exists a $\kappa\in (\frac{1}{2},1)$ satisfying
\begin{equation}
\kappa \leq \frac{h_{\tilde{\omega}_k}}{h_{T_k}} \text{ and } \kappa \leq \frac{h_{\tilde{\omega}_k}}{h_{T_{k+1}}}, \quad \forall k \in \Ks^c.
\label{eq: mesh regular}
\end{equation}
Suppose further that the data oscillation satisfies
\begin{equation}
\label{eq: assump cg equivalence Q^ac}
h_{\tilde{\omega}_k}^{2} \|f- \bar{f}_{\tom_k} \|^{2}_{\ell^{2}_{\eps}(\Ls_{\widetilde{\omega}_k})} \le \min\big(\frac{\kappa}{4}, \frac{\kappa^2}{(2\kappa-1)^2}\big)(\eta^{\cg}_{k})^{2}, \ k \in \Ks^c.
\end{equation}

By the definition of $\eta^{\cg}_k$ in \eqref{eq: cg residual node}, the following equivalence holds:
 \begin{align}
\frac{\kappa}{4}(\eta^{\cg}_{k})^{2}
\le h_{\tilde{\omega}_k}  |\sigma^{\ac}_k(y_{\ac}) - \sigma^{\ac}_{k+1}(y_{\ac})|^{2}
\le \frac{9\kappa^{2}}{(2\kappa - 1)^{2}}  (\eta^{\cg}_{k})^{2}, \ \forall k \in\Ks^c.
\label{eq: cg equivalence Q^ac}
\end{align}

\end{lemma}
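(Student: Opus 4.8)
The plan is to turn the stress jump into a nodal force through the discrete equilibrium equation, and then to compare that force with $\eta^{\cg}_k$ using only mesh regularity, treating the gap between the nodal force and the element averages as data oscillation. Throughout I write $J_k := \sigma^{\ac}_k(y_{\ac}) - \sigma^{\ac}_{k+1}(y_{\ac})$ and carry out the generic case $k \in \mathring{\Ks}^c$ in detail; the interface nodes $k = K_1-2, K_2+2$ follow the same argument after substituting the one-element definitions of $\bar{f}_{\tom_k}$ and $\eta^{\cg}_k$. First I would test the first variation \eqref{eq: variational equiv} against the nodal hat function $\phi_k \in \Ps_1(\Ts_h)$ equal to $1$ at $\ell_k$ and vanishing at the other nodes. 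Since $\sigma^{\ac}_\ell(y_{\ac})$ is constant on each continuum element by \eqref{eq: stress tensor ac atom wise}, and $\nabla\phi_k = 1/h_{T_k}$ on $T_k$, $=-1/h_{T_{k+1}}$ on $T_{k+1}$, the left-hand side telescopes to $J_k$ while the right-hand side equals $\eps\bar{f}_k$ by \eqref{projected_force}; thus the discrete Euler--Lagrange identity $J_k = \eps\bar{f}_k$ holds. (Adding a constant to $\phi_k$ does not affect the gradient terms, so the mean-zero constraint of $\Us^h$ is immaterial here.)

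Next I would decompose the nodal force. Writing $\bar{f}_k = \sum_\ell \phi_k(\ell)f_\ell$ and using the exact summation identity $\eps\sum_{\ell\in\Ls_{\tom_k}}\phi_k(\ell) = h_{\tilde{\omega}_k}$ (which splits as $\eps\sum_{\Ls_{T_k}}\phi_k = (h_{T_k}+\eps)/2$ and $\eps\sum_{\Ls_{T_{k+1}}}\phi_k = (h_{T_{k+1}}-\eps)/2$), I add and subtract the patch value $\bar{f}_{\tom_k}$:
\[
J_k = \eps\bar{f}_k = h_{\tilde{\omega}_k}\bar{f}_{\tom_k} + \eps\sum_{\ell\in\Ls_{\tom_k}}\phi_k(\ell)\big(f_\ell - \bar{f}_{\tom_k}\big).
\]
Because $0\le\phi_k\le1$ gives $\|\phi_k\|_{\ell^2_\eps(\Ls_{\tom_k})}\le h_{\tilde{\omega}_k}^{1/2}$, Cauchy--Schwarz bounds the remainder and yields
\[
\big|\sqrt{h_{\tilde{\omega}_k}}\,|J_k| - h_{\tilde{\omega}_k}^{3/2}|\bar{f}_{\tom_k}|\big| \le h_{\tilde{\omega}_k}\|f-\bar{f}_{\tom_k}\|_{\ell^2_\eps(\Ls_{\tom_k})} =: Q.
\]

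It then remains to compare the main term with $\eta^{\cg}_k$. Setting $a := h_{T_k}|\bar{f}_{T_k}|$, $b := h_{T_{k+1}}|\bar{f}_{T_{k+1}}|$, the definitions give
\[
h_{\tilde{\omega}_k}^3\bar{f}_{\tom_k}^2 = \tfrac{h_{\tilde{\omega}_k}}{4}(a+b)^2, \qquad (\eta^{\cg}_k)^2 = \tfrac14\big(h_{T_k}a^2 + h_{T_{k+1}}b^2\big).
\]
For the upper comparison I would use the weighted inequality $(a+b)^2\le(h_{T_k}^{-1}+h_{T_{k+1}}^{-1})(h_{T_k}a^2+h_{T_{k+1}}b^2)$ together with $h_{\tilde{\omega}_k}/h_{T_k},\,h_{\tilde{\omega}_k}/h_{T_{k+1}}\le\kappa/(2\kappa-1)$, which follows from $h_{T_{k+1}}\le h_{T_k}/(2\kappa-1)$ and its mirror, themselves consequences of \eqref{eq: mesh regular}; this gives $P := h_{\tilde{\omega}_k}^{3/2}|\bar{f}_{\tom_k}|\le\sqrt{2\kappa/(2\kappa-1)}\,\eta^{\cg}_k$. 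For the lower comparison I would use $(a+b)^2\ge a^2+b^2$ (valid precisely because $a,b\ge0$) with $h_{\tilde{\omega}_k}\ge\kappa h_{T_k},\kappa h_{T_{k+1}}$, giving $P\ge\sqrt\kappa\,\eta^{\cg}_k$. The hypothesis \eqref{eq: assump cg equivalence Q^ac} supplies $Q\le\tfrac{\sqrt\kappa}{2}\eta^{\cg}_k$ (lower branch) and $Q\le\tfrac{\kappa}{2\kappa-1}\eta^{\cg}_k$ (upper branch). Assembling, the upper bound reads $\sqrt{h_{\tilde{\omega}_k}}\,|J_k|\le P+Q\le\tfrac{3\kappa}{2\kappa-1}\eta^{\cg}_k$ (using $\sqrt{2\kappa/(2\kappa-1)}\le 2\kappa/(2\kappa-1)$ since the radicand is $\ge1$), and the lower bound reads $\sqrt{h_{\tilde{\omega}_k}}\,|J_k|\ge P-Q\ge\tfrac{\sqrt\kappa}{2}\eta^{\cg}_k$; squaring both gives the two stated inequalities.

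The hard part will be the lower bound. The jump $J_k=\eps\bar{f}_k$ is a \emph{signed} hat-weighted average of $f$, so it can be small even when $\eta^{\cg}_k$ — which sees $|\bar{f}_{T_k}|^2$ and $|\bar{f}_{T_{k+1}}|^2$ without cancellation — is not, and the claimed equivalence can hold only once such cancellation is ruled out. This is exactly why $\bar{f}_{\tom_k}$ is built from $|\bar{f}_{T_k}|,|\bar{f}_{T_{k+1}}|$ (so that the comparison $(a+b)^2\ge a^2+b^2$ loses nothing) and why the data-oscillation hypothesis is needed (it forces $f$ close to the single value $\bar{f}_{\tom_k}$ over the whole patch, precluding a sign change of $f$ between $T_k$ and $T_{k+1}$). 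Matching the precise constants $\kappa/4$ and $9\kappa^2/(2\kappa-1)^2$ then reduces to tracking which branch of the minimum in \eqref{eq: assump cg equivalence Q^ac} is used in each direction, and the interface nodes require only the degenerate one-element substitutions.
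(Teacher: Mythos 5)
Your proposal is correct and follows essentially the same route as the paper's proof: the paper also tests the coupling weak form \eqref{QC_firstV} with the nodal hat function (there called the discrete edge bubble $b^{E_k}$), identifies the stress jump with the hat-weighted nodal force, adds and subtracts $\bar{f}_{\tom_k}$, compares the main term $h_{\tilde{\omega}_k}^{3/2}|\bar{f}_{\tom_k}|$ with $\eta^{\cg}_k$ via the mesh-regularity bounds $\kappa \le h_{\tilde{\omega}_k}/h_{T} \le \kappa/(2\kappa-1)$, and absorbs the oscillation term through \eqref{eq: assump cg equivalence Q^ac}. Your bookkeeping via $P\pm Q$ before squaring is if anything slightly cleaner than the paper's (whose upper-bound step drops a cross term), and it reproduces the exact constants $\kappa/4$ and $9\kappa^2/(2\kappa-1)^2$.
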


\begin{proof}
We first construct the \emph{discrete edge bubble function} $b^{E_k} \in \Us^h$ such that
\begin{align}
b^{E_k}_\ell=
\left\{
\begin{array}{l l}
\frac{\ell - \ell_{k-1}}{\ell_{k} - \ell_{k-1}}, &\ell \in \Ls_{T_k},\\
\frac{\ell_{k+1}-\ell}{\ell_{k+1}-\ell_{k}}, &\ell \in \Ls_{T_{k+1}},\\
0, & \text{ otherwise },
\end{array} \right.
\end{align}
whose support is $\widetilde{\omega}_k$. By the weak formulation of the coupling problem \eqref{QC_firstV} and the definition of $b^{E_k}$, we have
\begin{equation}
\sigma^{\ac}_k(y_{\ac}) - \sigma^{\ac}_{k+1}(y_{\ac})  =  \sum_{k \in \Ks^{ac}}h_{T_k}\sigma^{\ac}_{k}(y_{\ac}) \nabla b^{E_k}|_{T_k} =  \eps \sum_{\ell \in \Ls_{\widetilde{\omega}_k}} f_{\ell} b^{E_{k}}_\ell.
\label{eq: cg equivalence Q^ac anal 1}
\end{equation}

We first prove the lower bound estimate in \eqref{eq: cg equivalence Q^ac}. We rewrite \eqref{eq: cg equivalence Q^ac anal 1} as
\begin{equation}
\eps \sum_{\ell \in \Ls_{\widetilde{\omega}_k}} \bar{f}_{\tom_k}b^{E_{k}}_\ell = \sigma^{\ac}_k(y_{\ac}) - \sigma^{\ac}_{k+1}(y_{\ac})  + \eps \sum_{\ell \in \Ls_{\widetilde{\omega}_k}} (\bar{f}_{\tom_k} - f_{\ell})b^{E_{k}}_\ell.
\label{eq: cg equivalence Q^ac anal 2}
\end{equation}
For $k \in \mathring{\Ks}^c$, the square of the left hand side of \eqref{eq: cg equivalence Q^ac anal 2} times $h_{\tilde{\omega}_k}$ can be estimated by
\begin{align}
 \label{eq: cg equivalence Q^ac anal 3}
&h_{\tilde{\omega}_k} \big(\eps \sum_{\ell \in \Ls_{\widetilde{\omega}_k}} \bar{f}_{\tom_k}b_{k}(\ell)\big)^2 \notag \\
= &h_{\tilde{\omega}_k} \big( \frac{h_{T_k} |\bar{f}_{T_k}| + h_{T_{k+1}} |\bar{f}_{T_{k+1}}|}{h_{T_k} + h_{T_{k+1}}}  h_{\tilde{\omega}_k}\big)^2 \nonumber \\
\ge &\frac{1}{2}\Big(\frac{h_{\tilde{\omega}_k}}{h_{T_k}} h_{T_k}^2 \| \bar{f}_{T_k}\|^2_{\ell^{2}_{\eps}(\Ls_{T_k})} + \frac{h_{\tilde{\omega}_k}}{h_{T_k}} h_{T_{k+1}}^2 \| \bar{f}_{T_{k+1}}\|^2_{\ell^{2}_{\eps}(\Ls_{T_{k+1}})} \Big) \ge \kappa (\eta^{\cg}_k)^2,
\end{align}
where $\eta^{\cg}_k$ is defined in \eqref{eq: cg residual node}.
Applying Cauch-Schwarz inequality and inequality of the arithmetic mean to the right hand side of \eqref{eq: cg equivalence Q^ac anal 2}, we have
\begin{align}
\kappa (\eta^{\cg}_{k})^2  \leq 2h_{\tilde{\omega}_k} \Big\{   |\sigma^{\ac}_k(y_{\ac}) - \sigma^{\ac}_{k+1}(y_{\ac}) |^{2}  + h_{\tilde{\omega}_k} \|f - \bar{f}_{\tom_k}\|^{2}_{{\ell^{2}_{\eps}}(\Ls_{\widetilde{\omega}_k})}  \Big\}.
 \label{eq: equal cg upper part 1}
\end{align}
Similar analysis applies to $k = K_1-2, K_2+2$ with a light modification according to the definition of $\eta^{\cg}_k$ at the two nodes. Using assumption \eqref{eq: assump cg equivalence Q^ac}, we obtain the lower bound.

For the upper bound in \eqref{eq: cg equivalence Q^ac}, we use Cauchy Schwarz inequility and the triangle inequality to obtain
\begin{align}
h_{\tilde{\omega}_k}|\sigma^{\ac}_k(y_{\ac}) - \sigma^{\ac}_{k+1}(y_{\ac})|^2 =& h_{\tilde{\omega}_k}|\eps \sum_{\ell \in \Ls_{\widetilde{\omega}_{k}}}f_{\ell}b^{E_{k}}_\ell|^2 \notag \\
\leq & h_{\tilde{\omega}_k}^2 \Big( \|\bar{f}_{\tom_k}\|^{2}_{\ell^{2}_{{\eps}}(\Ls_{\widetilde{\omega}_k})} + \|f - \bar{f}_{\tom_k}\|^{2}_{{\ell^{2}_{\eps}}(\Ls_{\widetilde{\omega}_k})} \Big).
 \label{eq: cg equivalence Q^ac anal 4}
\end{align}
The upper bound holds simply by the assumption \eqref{eq: assump cg equivalence Q^ac} and the mesh regularity
\begin{equation}
\label{eq: mesh regular 2}
\frac{h_{\tilde{\omega}_k}}{h_{T_k}} \leq \frac{\kappa}{2\kappa - 1},  \ \frac{h_{\tilde{\omega}_k}}{h_{T_{k+1}}} \leq \frac{\kappa}{2\kappa - 1}.
\end{equation}
\end{proof}

We then prove the equivalence of the gradient recovery error estimator and the jump of the stress tensor which is given by the following lemma.

\begin{lemma}
\label{lemma: Q^ac equivalence zz}
Suppose the gradient jumps on the interface satisfy the following inequality
\begin{equation}
\max_{\substack{K^{\leftSup} \in \{K_1-1,K_1\}\\ K^{\rightSup} \in \{K_2,K_2+1\}}} \{|\frac{\nabla y_{\ac}|_{T_{K^{\leftSup}}}-\nabla y_{\ac}|_{T_{K^{\leftSup}+1}}}{\nabla y_{\ac}|_{T_{K_1-2}}-\nabla y_{\ac}|_{T_{K_1-1}}}|,|\frac{\nabla y_{\ac}|_{T_{K^{\rightSup}}}-\nabla y_{\ac}|_{T_{K^{\rightSup}+1}}}{\nabla y_{\ac}|_{T_{K_2+2}}-\nabla y_{\ac}|_{T_{K_2+3}}}|\} \leq 3.
\label{eq: assumption on interface gradient}
\end{equation}
Then for $k \in \Ks^c$, we have the following equivalence
\begin{equation}
9\big(m_2^{\NN}\big)^2(\eta^{\z}_k)^2 \le h_{\tilde{\omega}_k} |\sigma^{\ac}_k(y_{\ac}) - \sigma^{\ac}_{k+1}(y_{\ac})|^2 \le 25\big(M_2^{\NN})^2 \frac{\kappa^2}{(2\kappa-1)^2}(\eta^{\z}_k)^2.
\label{eq: Q^ac equivalence zz}
\end{equation}
where $m_2^{\NN}$ and $M_2^{\NN}$ are defined in \eqref{eq: assump_on_derivs_V_1}.
\end{lemma}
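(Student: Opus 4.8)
The plan is to convert the two-sided bound \eqref{eq: Q^ac equivalence zz} into a pointwise estimate on the Cauchy--Born modulus $W''$ combined with the geometric mesh factor, treating the interior continuum nodes $\mathring{\Ks}^c$ and the two interface-adjacent nodes $\{K_1-2,K_2+2\}=\Ks^c\setminus\mathring{\Ks}^c$ separately.

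For an interior node $k\in\mathring{\Ks}^c$ both neighbouring element stresses are Cauchy--Born stresses, $\sigma^{\ac}_k(y_{\ac})=W'(\nabla y_{\ac}|_{T_k})$ and $\sigma^{\ac}_{k+1}(y_{\ac})=W'(\nabla y_{\ac}|_{T_{k+1}})$ by \eqref{eq: stress tensor ac atom wise}, so the mean value theorem gives $\sigma^{\ac}_k(y_{\ac})-\sigma^{\ac}_{k+1}(y_{\ac})=W''(\xi)\,(\nabla y_{\ac}|_{T_k}-\nabla y_{\ac}|_{T_{k+1}})$ for some $\xi$ between the two nodal gradients, hence $\xi\in E$ and the Hessian is evaluated at $(\xi,2\xi,-\xi,-2\xi)\in E^{\times 4}$. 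The first step is the pointwise bound $\tfrac32 m_2^{\NN}\le W''(\xi)\le \tfrac52 M_2^{\NN}$. This follows by writing $W''(F)=\partial_{11}V+\partial_{-1,-1}V+(\text{remainder})$ after expanding $W''$ by the chain rule along the direction $(1,2,-1,-2)$: the two nearest-neighbour diagonal terms are positive and lie in $[2m_2^{\NN},2M_2^{\NN}]$ by \eqref{eq: assump_on_derivs_V_1} and \eqref{eq: assump_on_derivs_V_3}, while every remaining second derivative has its index pair in $\Ss^{\NNN_1}$, $\Ss^{\NNN_2}$ or $\Ss^{\NNN_3}$ and is therefore controlled by the hierarchy \eqref{eq: assump_on_derivs_V_1}--\eqref{eq: assump_on_derivs_V_2}; summing these with their integer weights bounds the remainder in modulus by $\tfrac12 m_2^{\NN}$, exactly the slack that turns the factor $2$ into $\tfrac32$ and $\tfrac52$.

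The second step combines this with the identity $(\eta^{\z}_k)^2=\frac{h_{T_k}h_{T_{k+1}}}{4h_{\tilde{\omega}_k}}|\nabla y_{\ac}|_{T_{k+1}}-\nabla y_{\ac}|_{T_k}|^2$ from \eqref{eq: zz est nodewise}, which yields $h_{\tilde{\omega}_k}|\sigma^{\ac}_k(y_{\ac})-\sigma^{\ac}_{k+1}(y_{\ac})|^2=\frac{4h_{\tilde{\omega}_k}^2}{h_{T_k}h_{T_{k+1}}}\,|W''(\xi)|^2\,(\eta^{\z}_k)^2$. Since $1\le \frac{h_{\tilde{\omega}_k}^2}{h_{T_k}h_{T_{k+1}}}\le\frac{\kappa^2}{(2\kappa-1)^2}$ — the lower bound being the inequality of arithmetic and geometric means and the upper bound the product of the two estimates in \eqref{eq: mesh regular 2} — inserting $4(\tfrac32 m_2^{\NN})^2=9(m_2^{\NN})^2$ and $4(\tfrac52 M_2^{\NN})^2=25(M_2^{\NN})^2$ produces exactly \eqref{eq: Q^ac equivalence zz} at the interior nodes.

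The delicate case, and the one I expect to be the main obstacle, is the pair of interface-adjacent nodes $k=K_1-2$ and $k=K_2+2$, where one of the two stresses is the interface stress $\bar{\sigma}^{\ac}$ of \ref{app: coupling model stress tensor} rather than a Cauchy--Born stress, so a single mean value theorem no longer applies. For $k=K_1-2$ I would expand $W'(\nabla y_{\ac}|_{T_{K_1-2}})-\bar{\sigma}^{\ac}_{K_1-1}(y_{\ac})$ by telescoping along the bonds crossing the interface, using the patch-test consistency \eqref{eq: consistency} so that the difference vanishes on uniform deformations and is therefore a linear combination of the gradient increments $\nabla y_{\ac}|_{T_j}-\nabla y_{\ac}|_{T_{j+1}}$ over the few elements flanking the interface, with coefficients that are second derivatives of $V$ bounded by $m_2^{\NN}$ and $M_2^{\NN}$. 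These increments are not the single jump entering $\eta^{\z}_{K_1-2}$; they are controlled by it only through the interface regularity hypothesis \eqref{eq: assumption on interface gradient}, which bounds each neighbouring interface gradient jump by $3$ times $|\nabla y_{\ac}|_{T_{K_1-1}}-\nabla y_{\ac}|_{T_{K_1-2}}|$. Balancing these factors of $3$ against the weights of the expansion and against the Hessian hierarchy is what reproduces the same constants $9(m_2^{\NN})^2$ and $25(M_2^{\NN})^2\kappa^2/(2\kappa-1)^2$; the symmetric node $k=K_2+2$ is handled identically with the right-interface quantities $\Ls^{\rightSup}_{\Lambda^{\i}}$.
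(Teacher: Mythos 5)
Your proposal is correct and takes essentially the same route as the paper: at interior nodes $k\in\mathring{\Ks}^c$ you apply the mean value theorem to $W'$, derive the pointwise bounds $\tfrac32 m_2^{\NN}\le W''(\xi)\le\tfrac52 M_2^{\NN}$ from Assumption \ref{ass: assump_on_derivs_V} (the nearest-neighbour diagonal pair plus a remainder of modulus below $\tfrac12 m_2^{\NN}$), and combine with $1\le h_{\tilde{\omega}_k}^2/(h_{T_k}h_{T_{k+1}})\le \kappa^2/(2\kappa-1)^2$, which is exactly the paper's argument and yields its constants $9(m_2^{\NN})^2$ and $25(M_2^{\NN})^2\kappa^2/(2\kappa-1)^2$. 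Your plan for the interface nodes $k=K_1-2,K_2+2$ — expanding the stress difference as a linear combination of the few gradient jumps flanking the interface with second-derivative coefficients, then controlling the extra jumps via the ratio-$3$ hypothesis \eqref{eq: assumption on interface gradient} and the coefficients via the derivative hierarchy — is precisely what the paper carries out explicitly in Appendix \ref{appendix: lemma Q^ac equal zz interface}, and the balance of constants there works out exactly as you anticipate.
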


\begin{proof}
By the definition of $\sigma^{\ac}_k$ for $k \in \mathring{\Ks}^c$ and the mean value theorem we have
\begin{equation}
\sigma^{\ac}_k(y_{\ac}) - \sigma^{\ac}_{k+1}(y_{\ac}) = W'(\nabla y_{h}|_{T_k}) - W'(\nabla y_{h}|_{T_{k+1}}) = W''(\xi_k)(\nabla y_{\ac}|_{T_k} - \nabla y_{\ac}|_{T_{k+1}}),
\label{eq: Q^ac equivalence zz anal 1}
\end{equation}
where $\xi_k \in \conv(\nabla y_{\ac}|_{T_k}, \nabla y_{\ac}|_{T_{k+1}})$ and
\begin{align}
W''(F) =& 2 \partial_{1,1} V(\vec{{F}}) + 8\partial_{2,2} V(\vec{{F}})
- 2 \partial_{1,-1} V(\vec{{F}}) + 4 \partial_{1,2} V(\vec{{F}})\notag \\
& -4 \partial_{2,-1} V(\vec{{F}}) + 4 \partial_{-1,-2} V(\vec{{F}}) -4 \partial_{1,-2} V(\vec{{F}}) - 8 \partial_{2,-2} V(\vec{{F}}),
\end{align}
with $\vec{{F}}:= (F,2F,-F,-2F)$. Here we have used the symmetry that $\partial_{1, 1}V(\vec{{F}}) = \partial_{-1,-1}V(\vec{{F}})$ and $\partial_{2,2}V(\vec{{F}})=\partial_{-2,-2}V(\vec{{F}})$, and the differentiability of $V$ so that $\partial_{ji}V(\vec{\zeta}) = \partial_{ij}V(\vec{\zeta})$. Then by the definition of $\eta^{\z}_{k}$ we obtain
\begin{align*}
4W''(\xi_k)^2(\eta^{\z}_{k})^2 \le h_{\tilde{\omega}_k}|\sigma^{\ac}_k(y_{\ac}) -& \sigma^{\ac}_{k+1}(y_{\ac})|^2 \notag \\ =& 4W''(\xi_k) \frac{h_{\tilde{\omega}_k}^2}{h_{T_k} h_{T_{k+1}}}(\eta^{\z}_{k})^2 \le \frac{4\kappa^2}{(2\kappa-1)^2}W''(\xi_k)^2(\eta^{\z}_k)^2.
\end{align*}
Applying the \ref{ass: assump_on_derivs_V} we establish the estimate \eqref{eq: Q^ac equivalence zz} for $k \in \mathring{\Ks}^c$.
The analysis for $k = K_1-2, K_2+2$ are similar but more involved because of the different formulation of $\sigma^{\ac}$ on the interface. To limit the length of the present work, we put it in Appendix \ref{appendix: lemma Q^ac equal zz interface} where \eqref{eq: assumption on interface gradient} is used.
\end{proof}

Combining \ref{lemma: cg equivalence Q^ac} with \ref{lemma: Q^ac equivalence zz} and using \eqref{eq: zz node-element equiv}, we have the following equivalence
\begin{theorem}
\label{thm: equivalence of the cg residual and the zz estimator}
The following equivalence holds for the coarse-graining residual and the gradient recovery error estimator that
\begin{equation}
\label{eq: equivalence zz and cg}
\underline{C}^{\z-\cg}\eta^{\z} \le   \eta^{\cg} \le   \overline{C}^{\z-\cg} \eta^{\z},
\end{equation}
where $\eta^{\cg}$ and $\eta^{\z}$ are defined in \eqref{eq: cg residual} and \eqref{eq: zz est definition} respectively and the constants are given by
\begin{equation}
\underline{C}^{\z-\cg} = \frac{(2\kappa-1) m_{2}^{\NN}}{\sqrt{2}\kappa}
 \ \text{ and }\
 \overline{C}^{\cg-\z} =  \frac{10\sqrt{3\kappa}M_{2}^{\NN}}{2\kappa-1} .
\end{equation}
\end{theorem}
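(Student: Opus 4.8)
The plan is to use the nodewise quantity $Q_k := h_{\tilde{\omega}_k}|\sigma^{\ac}_k(y_{\ac}) - \sigma^{\ac}_{k+1}(y_{\ac})|^2$ as a bridge between the two estimators, since both \ref{lemma: cg equivalence Q^ac} and \ref{lemma: Q^ac equivalence zz} sandwich $Q_k$ from above and below --- the former in terms of $(\eta^{\cg}_k)^2$ and the latter in terms of $(\eta^{\z}_k)^2$. Concatenating the two two-sided bounds nodewise immediately produces a nodewise equivalence between $(\eta^{\cg}_k)^2$ and $(\eta^{\z}_k)^2$ with constants depending only on $\kappa$, $m_2^{\NN}$ and $M_2^{\NN}$. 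I would carry this out for every $k \in \Ks^c$ simultaneously, so that the interface nodes $k = K_1-2$ and $k = K_2+2$ are treated on the same footing as the interior nodes, the relevant case distinctions having already been absorbed into the statements of the two lemmas.

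For the upper bound $\eta^{\cg} \le \overline{C}^{\z-\cg}\eta^{\z}$, I would start from the lower bound in \ref{lemma: cg equivalence Q^ac}, rewritten as $(\eta^{\cg}_k)^2 \le \tfrac{4}{\kappa}Q_k$, substitute the upper bound in \ref{lemma: Q^ac equivalence zz}, namely $Q_k \le 25(M_2^{\NN})^2\tfrac{\kappa^2}{(2\kappa-1)^2}(\eta^{\z}_k)^2$, and sum over $k \in \Ks^c$. Using the identity $(\eta^{\cg})^2 = \sum_k (\eta^{\cg}_k)^2$ together with the right half of the node--element equivalence \eqref{eq: zz node-element equiv}, $\sum_k (\eta^{\z}_k)^2 \le 3(\eta^{\z})^2$, yields $(\eta^{\cg})^2 \le \tfrac{300\kappa (M_2^{\NN})^2}{(2\kappa-1)^2}(\eta^{\z})^2$, and taking square roots gives the stated constant $\tfrac{10\sqrt{3\kappa}\,M_2^{\NN}}{2\kappa-1}$. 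Symmetrically, for the lower bound I would begin from the lower bound in \ref{lemma: Q^ac equivalence zz}, $(\eta^{\z}_k)^2 \le \tfrac{1}{9(m_2^{\NN})^2}Q_k$, combine it with the upper bound in \ref{lemma: cg equivalence Q^ac}, $Q_k \le \tfrac{9\kappa^2}{(2\kappa-1)^2}(\eta^{\cg}_k)^2$, where the factor $9$ cancels cleanly, and sum over $k$. Invoking the left half of \eqref{eq: zz node-element equiv}, $\tfrac12(\eta^{\z})^2 \le \sum_k (\eta^{\z}_k)^2$, and again $\sum_k (\eta^{\cg}_k)^2 = (\eta^{\cg})^2$, one obtains $(\eta^{\z})^2 \le \tfrac{2\kappa^2}{(2\kappa-1)^2 (m_2^{\NN})^2}(\eta^{\cg})^2$, which rearranges into $\underline{C}^{\z-\cg} = \tfrac{(2\kappa-1)m_2^{\NN}}{\sqrt{2}\kappa}$.

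Since the two lemmas do all the analytic work, I do not expect a genuine obstacle; the theorem is essentially bookkeeping in chaining two-sided bounds through $Q_k$. The only points that demand care are (i) verifying that the hypotheses of both lemmas are in force --- mesh regularity \eqref{eq: mesh regular}, the data-oscillation smallness \eqref{eq: assump cg equivalence Q^ac}, the interface gradient-jump bound \eqref{eq: assumption on interface gradient}, and \ref{ass: assump_on_derivs_V} on $V$ --- so that the nodewise equivalences hold for \emph{all} $k \in \Ks^c$, including the interface nodes where $\sigma^{\ac}$ has its more complicated form; and (ii) applying the correct half of the node--element equivalence \eqref{eq: zz node-element equiv} on each side, since the factors $\tfrac12$ and $3$ there feed directly into the final constants $\sqrt{2}$ and $10\sqrt{3}$. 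Tracking the powers of $\kappa/(2\kappa-1)$ through both inequalities is the most error-prone part of the arithmetic, but it is entirely routine.
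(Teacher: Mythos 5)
Your proposal is correct and follows exactly the paper's route: the paper obtains the theorem precisely by combining \ref{lemma: cg equivalence Q^ac} and \ref{lemma: Q^ac equivalence zz} through the stress-jump quantity $h_{\tilde{\omega}_k}|\sigma^{\ac}_k(y_{\ac})-\sigma^{\ac}_{k+1}(y_{\ac})|^2$ and then invoking the node--element equivalence \eqref{eq: zz node-element equiv}. Your constant-tracking also reproduces the stated values $\underline{C}^{\z-\cg}=\frac{(2\kappa-1)m_2^{\NN}}{\sqrt{2}\kappa}$ and $\overline{C}^{\cg-\z}=\frac{10\sqrt{3\kappa}M_2^{\NN}}{2\kappa-1}$ exactly.
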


\begin{remark}
\label{remark: oscillation term}
The proof of the equivalence between the coarse-graining residual and the gradient revery error estimator essentially follows a similar line as that in \cite{Carstensen_Verfurth_99_Edge_Residual}. In order for the estimate to hold, we expect that the data oscillation $h_{\tilde{\omega}_k} \|f - \bar{f}_{\tom_k}\|_{{\ell^{2}_{\eps}}(\Ls_{\widetilde{\omega}_k})}$ is of higher order compared with $\eta^{\cg}_k$ which is proved in Appendix \ref{sec: higher order of the data oscillation}.
\end{remark}

\subsection{Equivalence of the gradient recovery error estimator and the modified model residual}
We prove the equivalence of the gradient recovery error estimator and a modification of the model residual, which will be defined in the next theorem, in the continuum region.

\begin{theorem}
\label{thm: equivalence of the model residual and the zz estimator}
Let $k \in \mathring{\Ks}^c$ and $N_{\tilde{\omega}_k} := \frac{|\Ls_{T_k} \cup \Ls_{T_{k+1}}|}{2}$. The following equivalence holds that
\begin{equation}
\label{eq: equivalence zz and mo}
\underline{C}^{\z-\mo} \eta^\z_k \le N_{\tilde{\omega}_k}^{\frac{1}{2}} (\eta^\mo_k) \le \overline{C}^{\z-\mo} \eta^\z_k,
\end{equation}
where $\eta^\mo_k$ and $\eta^\z_k$ are defined in \eqref{eq: node wise err est} and \eqref{eq: zz est nodewise} respectively and the constants are given by
\begin{equation}
\underline{C}^{\z-\mo} =  \frac{\kappa}{2}m_2^{NNN} \text{ and }\  \overline{C}^{\z-\mo} = \frac{6\kappa}{2\kappa-1}M_2^{NNN}.
\end{equation}
\end{theorem}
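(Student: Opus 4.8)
The plan is to linearise the model residual about the two uniform states adjacent to the node $\ell_k$ and thereby express it, exactly as the continuum stress jump was treated in Lemma \ref{lemma: Q^ac equivalence zz}, as a multiple of the gradient jump $\delta_k := \nabla y_{\ac}|_{T_{k+1}} - \nabla y_{\ac}|_{T_k}$. Write $F_k := \nabla y_{\ac}|_{T_k}$ and $F_{k+1} := \nabla y_{\ac}|_{T_{k+1}}$. Because the elements indexed by $\mathring{\Ks}^c$ have size at least $6\eps$, the only nonvanishing residuals near $\ell_k$ are $R^{\mo}_\ell$ for $\ell = \ell_k-2,\dots,\ell_k+3$ — precisely the six terms summed in $\eta^{\mo}_k$. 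For these $\ell$ each finite difference $D y_{\ac}$ takes one of the values $F_k,\,F_{k+1},\,2F_k,\,2F_{k+1},\,F_k+F_{k+1}$ (and their negatives), depending on whether the bonds feeding $D_1,D_2,D_{-1},D_{-2}$ straddle $\ell_k$. I would first tabulate this transition explicitly.

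Next I would compute each residual. By the patch test \eqref{eq: consistency}, the uniform deformation $y^{F_k}$ satisfies $\sigma^{\a}_\ell(y^{F_k}) = W'(F_k) = \sigma^{\ac}_\ell(y_{\ac})$ for $\ell \le \ell_k$, and likewise with $F_{k+1}$ for $\ell \ge \ell_k+1$; hence on each side $R^{\mo}_\ell = \sigma^{\a}_\ell(y_{\ac}) - \sigma^{\a}_\ell(y^{F_\bullet})$. Applying the fundamental theorem of calculus along the segment joining the uniform state to $y_{\ac}$ in the finitely many straddling $D$-components, each residual becomes $R^{\mo}_\ell = c_\ell \delta_k$, where $c_\ell$ is an integer-weighted combination of second derivatives $\partial_{ij}V$ evaluated at intermediate gradients in $E^{\times 4}$. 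The dominant part of every $c_\ell$ is the nearest-neighbour term (a multiple of $\partial_{11}V>0$), while the remaining contributions fall into the families $\Ss^{\NNN_1},\Ss^{\NNN_2},\Ss^{\NNN_3}$ and are controlled by Assumption \ref{ass: assump_on_derivs_V}. Using \eqref{eq: assump_on_derivs_V_1}--\eqref{eq: assump_on_derivs_V_3} to prevent the next-nearest-neighbour corrections from cancelling the dominant term, I expect to obtain the two-sided coefficient bound
\[
\tfrac14 m_2^{NNN} \le \Big(\sum_{\ell=\ell_k-2}^{\ell_k+3} c_\ell^2\Big)^{1/2} \le 3\, M_2^{NNN},
\]
which is where the constants $m_2^{NNN},M_2^{NNN}$ (collecting these second-derivative combinations, in analogy with $m_2^{\NN},M_2^{\NN}$) enter.

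Finally I would convert this into the stated estimate. The key bookkeeping identity is $N_{\tilde{\omega}_k} = h_{\tilde{\omega}_k}/\eps$, since $\eps N_{\tilde{\omega}_k} = \tfrac12(h_{T_k}+h_{T_{k+1}}) = h_{\tilde{\omega}_k}$. Then
\[
N_{\tilde{\omega}_k}(\eta^{\mo}_k)^2 = \frac{h_{\tilde{\omega}_k}}{\eps}\,\eps \sum_{\ell=\ell_k-2}^{\ell_k+3}(R^{\mo}_\ell)^2 = h_{\tilde{\omega}_k}\,\delta_k^2 \sum_{\ell=\ell_k-2}^{\ell_k+3} c_\ell^2,
\]
whereas $(\eta^{\z}_k)^2 = \frac{h_{T_k}h_{T_{k+1}}}{4h_{\tilde{\omega}_k}}\,\delta_k^2$ by \eqref{eq: zz est nodewise}. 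Dividing the two and invoking the mesh regularity \eqref{eq: mesh regular} together with its consequence \eqref{eq: mesh regular 2}, namely $\kappa \le h_{\tilde{\omega}_k}/\sqrt{h_{T_k}h_{T_{k+1}}} \le \kappa/(2\kappa-1)$, converts the coefficient bounds above into the claimed constants $\underline{C}^{\z-\mo} = \tfrac{\kappa}{2}m_2^{NNN}$ and $\overline{C}^{\z-\mo} = \tfrac{6\kappa}{2\kappa-1}M_2^{NNN}$.

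The main obstacle is the middle step: the explicit, nonlocal bookkeeping of which $D$-components of which neighbours straddle $\ell_k$ for each of the six sites $\ell_k-2,\dots,\ell_k+3$, and hence exactly which $\partial_{ij}V$ terms — and with what integer weights — appear in each $c_\ell$. Unlike the continuum stress jump of Lemma \ref{lemma: Q^ac equivalence zz}, where a single factor $W''(\xi_k)$ arises, here six distinct coefficients must be assembled, and the delicate part is the \emph{lower} bound: one must show that no individual $c_\ell$ is driven to zero by cancellation between its nearest-neighbour and next-nearest-neighbour contributions. Assumption \ref{ass: assump_on_derivs_V} is precisely what guarantees this, keeping $\sum_\ell c_\ell^2$ bounded away from zero (and from above), which is what the constants $m_2^{NNN}$ and $M_2^{NNN}$ encode.
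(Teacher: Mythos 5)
Your overall skeleton is the same as the paper's (the actual proof is in Appendix \ref{appendix: proof of equivalence of zz and model residual}): write each of the six nonvanishing residuals $R^{\mo}_\ell$, $\ell=\ell_k-2,\dots,\ell_k+3$, via the mean value theorem as $c_\ell\,\delta_k$ with $\delta_k=\nabla y_{\ac}|_{T_{k+1}}-\nabla y_{\ac}|_{T_k}$, bound the coefficients two-sidedly, and convert through $N_{\tilde{\omega}_k}=h_{\tilde{\omega}_k}/\eps$ and the mesh regularity \eqref{eq: mesh regular}, \eqref{eq: mesh regular 2}; your final bookkeeping is correct and reproduces exactly $\underline{C}^{\z-\mo}=\frac{\kappa}{2}m_2^{\NNN}$ and $\overline{C}^{\z-\mo}=\frac{6\kappa}{2\kappa-1}M_2^{\NNN}$. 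The gap is in the middle step, the one you yourself flag as the main obstacle: your structural claim that ``the dominant part of every $c_\ell$ is the nearest-neighbour term (a multiple of $\partial_{11}V>0$)'' is false. If you carry out the tabulation you propose, you find that the diagonal nearest-neighbour derivatives $\partial_{1,1}V$ and $\partial_{-1,-1}V$ never appear in any $c_\ell$: whenever a difference of $\partial_{\pm1}V$ is taken, the two arguments agree in the corresponding nearest-neighbour slot and differ only in second-neighbour or opposite-side slots, so only cross and second-neighbour derivatives ($\partial_{1,2}V$, $\partial_{1,-1}V$, $\partial_{2,2}V$, $\partial_{-2,2}V$, \dots) survive. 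This exact cancellation is the linearized patch test, and it is the very reason the theorem's constants are $m_2^{\NNN},M_2^{\NNN}$ --- which are not constants you are free to coin ``in analogy with $m_2^{\NN},M_2^{\NN}$'': the paper defines them in Section \ref{Sec: Gradient Recover EE} as the infimum/supremum of $|\partial_{ij}V|$ over $(i,j)\in\Ss^{\NNN_1}$.

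Because of this, your lower-bound strategy (``no individual $c_\ell$ is driven to zero by cancellation'') is both unprovable and not what is needed. In the paper's computation the outermost coefficients $c_{\ell_k-2},c_{\ell_k+3}$ are single $\Ss^{\NNN_3}$ derivatives, hence provably tiny (at most $\frac{1}{100}m_2^{\NNN}$), and the central coefficients $c_{\ell_k},c_{\ell_k+1}$ contain the $\Ss^{\NNN_1}$ combinations $-\partial_{-1,1}V$ and $\partial_{2,2}V+\partial_{-2,-2}V$, which by the sign structure (Table \ref{Tab:2ed derivative} together with \eqref{eq: assump_on_derivs_V_3}) enter with \emph{opposite} signs; Assumption \ref{ass: assump_on_derivs_V} does not exclude $|\partial_{-1,1}V|\approx|\partial_{2,2}V|+|\partial_{-2,-2}V|$, so these central coefficients admit no uniform positive lower bound, and the paper accordingly bounds them below by $0$. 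The positive lower bound that makes the left inequality of \eqref{eq: equivalence zz and mo} work is extracted instead from the intermediate pair $c_{\ell_k-1},c_{\ell_k+2}$: each is a sum of five \emph{same-signed} $\Ss^{\NNN_2}$ and $\Ss^{\NNN_3}$ derivatives, so no cancellation can occur there, and sign-definiteness (not dominance of $\partial_{11}V$) is what rules it out. Without identifying which of the six coefficients actually carry the lower bound, and why, the proof of the lower inequality does not go through; the upper bound and the conversion to the stated constants, on the other hand, are fine as you set them up.
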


\begin{proof}
By the definition of $\eta^\mo_k$ and $\eta^\z_k$, we have
\begin{equation}
N_{\tilde{\omega}_k} (\eta^{\mo}_k)^2 = h_{\tilde{\omega}_k}\sum_{\ell = \ell_{k}-2}^{\ell_k+3} (R_\ell^\mo)^2 \text{ and } (\eta^\z_k)^2 = (\frac{h_{T_k}h_{T_{k+1}}}{4h_{\tilde{\omega}_k}}) \big|  \nabla y_{\ac}|_{T_{k+1}} - \nabla y_{\ac}|_{T_k} \big|^2,
\end{equation}
from which we easily expect the equivalence of the two by the definitions of $R^\mo_\ell$ in \eqref{def: node wise residual R} and the stress tensors $\sigma^a_\ell$ and $\sigma^{\ac}_\ell$ in \eqref{eq: Q^a_l} and \eqref{eq: stress tensor ac atom wise}. However, the proof is then rather tedious which consists of a load of multi variable Taylor expansion and a subtle discussion of signs and magnitudes of second order partial derivatives of $V$. Therefore, we leave detail to Appendix \ref{appendix: proof of equivalence of zz and model residual}.
\end{proof}

\subsection{The hybrid error estimator for a/c coupling method}
Having established the equivalence of the classical gradient recovery error estimator and the coarse-graining and the model residual, we are ready to propose the hybrid a posteriori error estimator for our a/c coupling method, which, in an elementwise form, is given by
\begin{align}
\label{eq: hybrid est elementwise}
\eta^{\hybrid}_{T_k}=
\left\{
\begin{array}{l l}
\big\{ \big[C^{\z-\cg} \eta^\z_{T_k}\big]^2 + \frac{(C^{\z-\mo} )^2}{2}  \big[ \tN_{k-1}^{-1} ( \eta^{\z}_{k-1})^2 + \tN_{k}^{-1} ( \eta^{\z}_{k})^2 \big] \big\}^{\frac{1}{2}},
&k \in \mathring{\Ks}^c_{\Ts_h},\\
\big\{ \big[C^{\z-\cg} \eta^\z_{T_k}\big]^2 + \frac{1}{2} \tN_{k-1}^{-1} (C^{\z-\mo} \eta^{\z}_{k-1})^2 + (\eta^{\mo}_k)^2 \big\}^{\frac{1}{2}},
&k={K_{1}-2},\\
\big\{ \big[C^{\z-\cg} \eta^\z_{T_k}\big]^2 +(\eta^{\mo}_{k-1})^2 +  \frac{1}{2}\tN_{k}^{-1} (C^{\z-\mo} \eta^{\z}_{k})^2 \big\}^{\frac{1}{2}},
&k={K_{2}+3},\\
\end{array} \right.
\end{align}
where  $c_{\a}(y_h)$ is the a posteriori stability constant and
\begin{equation}
\label{hybrid estimator constants}
C^{\z-\cg} = \frac{\underline{C}^{\z-\cg} + \overline{C}^{\z-\cg}}{2} \ \text{ and }\  C^{\z-\mo} = \frac{\underline{C}^{\z-\mo} + \overline{C}^{\z-\mo}}{2}.
\end{equation}

There are several comments we need to give at this moment.

First of all, the reason for which we use the hybrid error estimator instead of the gradient recovery error estimator is that the gradient recovery estimator may not correctly reflect the influence of the model error at the interface which may be a more serious problem in higher dimensions \cite{HW_ML_PL_LZ_2D_A_Post}. The idea behind the hybrid estimator is that we use a certain multiple of the gradient recovery estimator to approximate the residual based error estimator in the continuum region while keeping the residual based estimator on the interface whose effectiveness and efficiency have been proved.


Second, the computational cost of the hybrid error estimator is only of $\Os(K)$ for any generic external load $f$ as opposed to $\Os(N)$ for the residual based counterpart (we need to first compute $\|f\|_{\ell_{\varepsilon}^{2}(\Ls_{T_k})}$ to obtain any type of average $\bar{f}_{T_k}$ which essentially increase the computational cost).

Third, the constants $\underline{C}^{\z-\cg},  \overline{C}^{\z-\cg}, \underline{C}^{\z-\mo}$ and $ \overline{C}^{\z-\mo}$ are unknown because of the generic constants $M_2^{\NN}, m_2^{\NN}, M_2^{\NNN}$ and $m_2^{\NNN}$. In practice, we estimate these generic constants a posteriorily to be
\begin{align}
M_2^{\NN} = \sup_{\substack{\ell  \in \Cs \cup \Is, \\ i = \pm1}} |\partial_{ii}V(D y^h_\ell)|, \quad &m_2^{\NN} = \inf_{\substack{\ell  \in \Cs \cup \Is, \\ i = \pm1}} |\partial_{ii}V(D y^h_\ell)|, \nonumber \\
M_2^{\NNN} = \sup_{\substack{\ell \in \Cs \cup \Is,\\  (i,j) \in \Ss^{\NNN_1} }} |\partial_{ij}V(D y^h_\ell)|, \quad &
m_2^{\NNN} = \inf_{\substack{\ell \in \Cs \cup \Is,\\  (i,j) \in \Ss^{\NNN_1} }} |\partial_{ij}V(D y^h_\ell)|.
\end{align}
where $\Ss^{\NNN_1},  \Ss^{\NNN_2}$ and $ \Ss^{\NNN_3}$ are defined by \eqref{def: sets of interactions 1}, \eqref{def: sets of interactions 2} and \eqref{def: sets of interactions 3} respectively. We note that the computation cost of these constants is again of $\Os(K)$. The constants $C^{\z-\cg}$ and $C^{\z-\mo}$ are then chosen as the average of the related constants as a result of the equivalence relations \eqref{eq: equivalence zz and cg} and \eqref{eq: equivalence zz and mo}.



\section{Numerical Experiments}
\label{Sec: Numeric Exp}
In this section, we present numerical experiments to illustrate the results of our analysis. We will propose an adaptive mesh refinement algorithm using the two different error estimators derived earlier, and show numerically that both estimators lead to an optimal convergence rate in terms of the number of degrees of freedom as we expect. In addition, we show the efficiency factors of the two estimators remain in a satisfactory level which is within our estimate.

With certain adjustments, the problem we consider here follows that in \cite{CO_HW_ACC_1D_A_Posteriori} which is a typical testing case in 1D. We fix our computational domain $\Omega = [-1/2 - 10^{-4}, 1/2 + 10^{-4}]$, $F = 1$, $N = 2L$, and let $V$ be the site energy given by an EAM model as
\begin{align*}
V(Dy_{\ell})= \frac{1}{2}\sum_{i \in \{1,2\};j \in \{-1,-2\}}(&\phi(D_{i}y_{\ell})+\phi(-D_{j}y_{\ell})) \notag \\ 
&+ \widetilde{F}\Big(  \sum_{i \in \{1,2\};j \in \{-1,-2\}}\big[\psi(D_{i}y_{\ell})+\psi(-D_{j}y_{\ell})\big]  \Big),
\end{align*}
where $\phi(r) = e^{-2a(r-1)}- 2e^{-a(r-1)}$, $\psi(r) = e^{-br}$ and $\widetilde{F}(\rho) = c[(\rho-\rho_{0})^2 +(\rho-\rho_{0})^4]$, with the parameter $a = 4.4$, $b=3$, $c=5$, $\rho_{0}=6e^{-b}$. We defined the external force $f_\ell$ to be
\begin{align*}
  f_\ell=
\left\{
\begin{array}{l l}
 - 0.4 (1 + \frac{1}{2\eps \ell}), &\text{for $-L \le \ell < 0$},\\
 0.4(\frac{1}{2\eps \ell} - 1), &\text{for $0 < \ell \le L$},\\
0, & \text{for $\ell \in \{-(L+5), \ldots, -(L+1), L+1, \ldots, L+5 \}$.}\\
\end{array} \right.
\end{align*}

Note that $f$ behaves essentially like $|x|^{-1}$, which is a
typical decay rate for elastic fields generated by localized defects
in 2D/3D which may be not be created by local perturbations in our 1D model and other reasons for which the external force is such defined can be found in detail in \cite[Section 6]{CO_HW_ACC_1D_A_Posteriori}. The adjustment we make here is that we leave the force zero on either boundaries of our computational domain for a purely technical reason that, according to our mesh structure introduced immediately in \ref{sec: adaptive algorithm}, the data oscillation $h_{\tom_k}\|f - \bar{f}_{\tom_k}\|^{2}_{{\ell^{2}_{\eps}}(\Ls_{\widetilde{\omega}_k})}$ of higher order compared with $\eta^{\cg}$ which is shown in Appendix \ref{sec: higher order of the data oscillation}.

\subsection{Adaptive algorithm}
\label{sec: adaptive algorithm}
We first define the error estimators
according to which we drive the mesh refinement. The element error estimators for the residual based algorithm are
given by (cf. \eqref{eq: ele wise err est} and
\eqref{eq: cg residual element})
\begin{displaymath}
  (\rho^{\rm{res}}_{T_k})^2 :=   \frac{1}{c_{\a}(y_h)} \cases{
   (\eta^{\mo}_{T_k})^2 + (\eta^{\cg}_{T_k})^2, & \text{if
    } k \in \Ks^c_{\Ts_h}, \\
    0, & \text{otherwise}.
  }
\end{displaymath}
The element error estimators for the hybrid based algorithm are given
by (cf. \ref{eq: hybrid est elementwise})
\begin{displaymath}
  \rho^{\hybrid}_{k} :=    \frac{1}{c_{\a}(y_h)} \cases{ \eta^{\hybrid}_{T_k}, & \text{if
    } k \in \Ks^c_{\Ts_h}, \\
    0, & \text{otherwise}.
  }
\end{displaymath}
Here we define the averaging force $\bar{f}_{T_k}$ to be
\begin{equation}
\bar{f}_{T_k} := \sign(f|_{T_k})h_{T_k}^{-1}\|f\|_{L^2(T_k)}.
\end{equation}
Note that $\bar{f}_{T_k}$ is well-defined since we may assume that the sign of $f$ keeps the same on any element $T_k$ for the force in our experiment.

In the following algorithm, let $\rho_{T_k} \in \{\rho_{T_k}^{\rm{res}},
\rho_{T_k}^{\hybrid}\}$. Our algorithm is based on established ideas from the
adaptive finite element literature \cite{Dorfler_Conver_Adap_Alg_Poiss}.
\begin{figure}
  \includegraphics[width=0.8\textwidth]{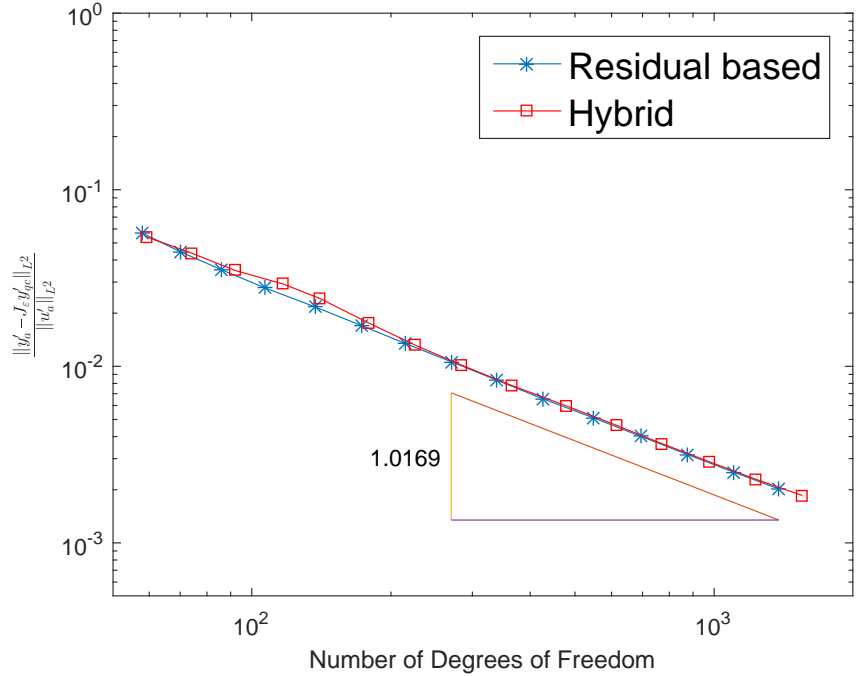}
  \caption{Relative errors in the deformation gradient
     plotted against the number of
    degrees of freedom for two types of mesh refinements.}
  \label{Fig:Relative_Error_in_Gradient}
\end{figure}

\begin{algorithm}[A posteriori mesh refinement]
\label{Algo:EDMesh}
\begin{enumerate}
\item Add the nodes $0, \pm \eps, \dots, \pm 3\eps, \pm L \eps, \pm (L+5) \eps$ to the mesh. Keep the elements $T_1 :=[-(L+5) \eps, -L\eps]$ and $T_K :=[L \eps, (L+5)\eps]$ fixed in subsequent meshes.
\item {\it Compute: } Compute the QC solution on the current mesh, compute the
estimators $\rho_{T_k}$.
\item {\it Mark:} Choose a minimal subset $\mathcal{M} \subset \{1,
  \dots, K\}$ of indices such that
  \begin{equation}
    \sum_{k \in \mathcal{M}} \rho_{T_k}^2 ~\ge~ \frac12 \sum_{k = 1}^K \rho_{T_k}^2.
  \end{equation}
\item {\it Refine: } Bisect all elements $T_k^h$ with indices
  belonging to $\mathcal{M}$. If an element that needs to be refined is adjacent to the atomistic
  region, merge this element into the atomistic region and create a
  new atomistic to continuum interface.
\item If the resulting mesh reaches a prescibed maximal number of degrees of
  freedom, stop algorithm; otherwise, go to Step (2).
\end{enumerate}
\end{algorithm}

\begin{figure}
  \includegraphics[width=0.8\textwidth]{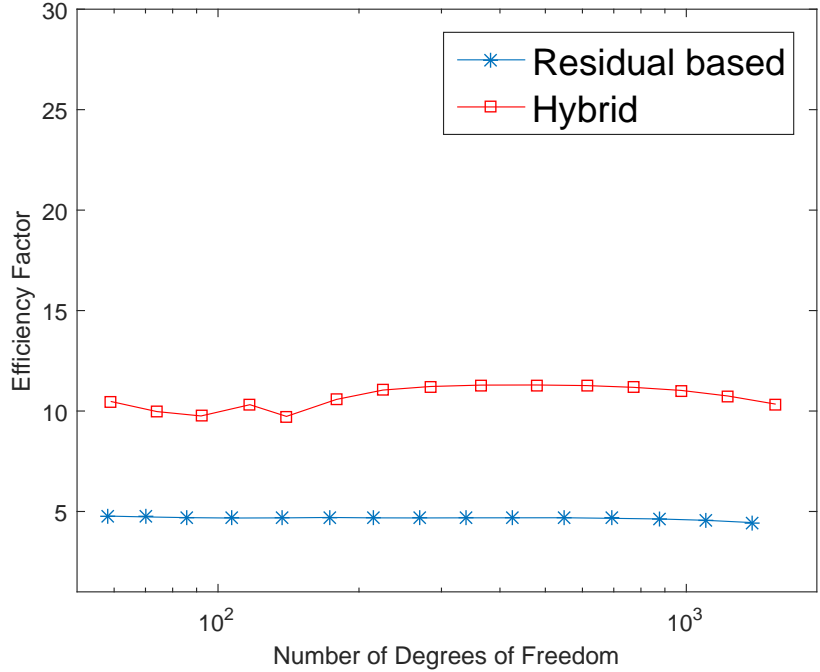}
  \caption{Efficiency factors plotted against the number of degrees of freedom
    for two types of mesh refinements.}
  \label{Fig:Efficiency_Factor_Gradient}
\end{figure}
\begin{figure}
  \includegraphics[width=0.8\textwidth]{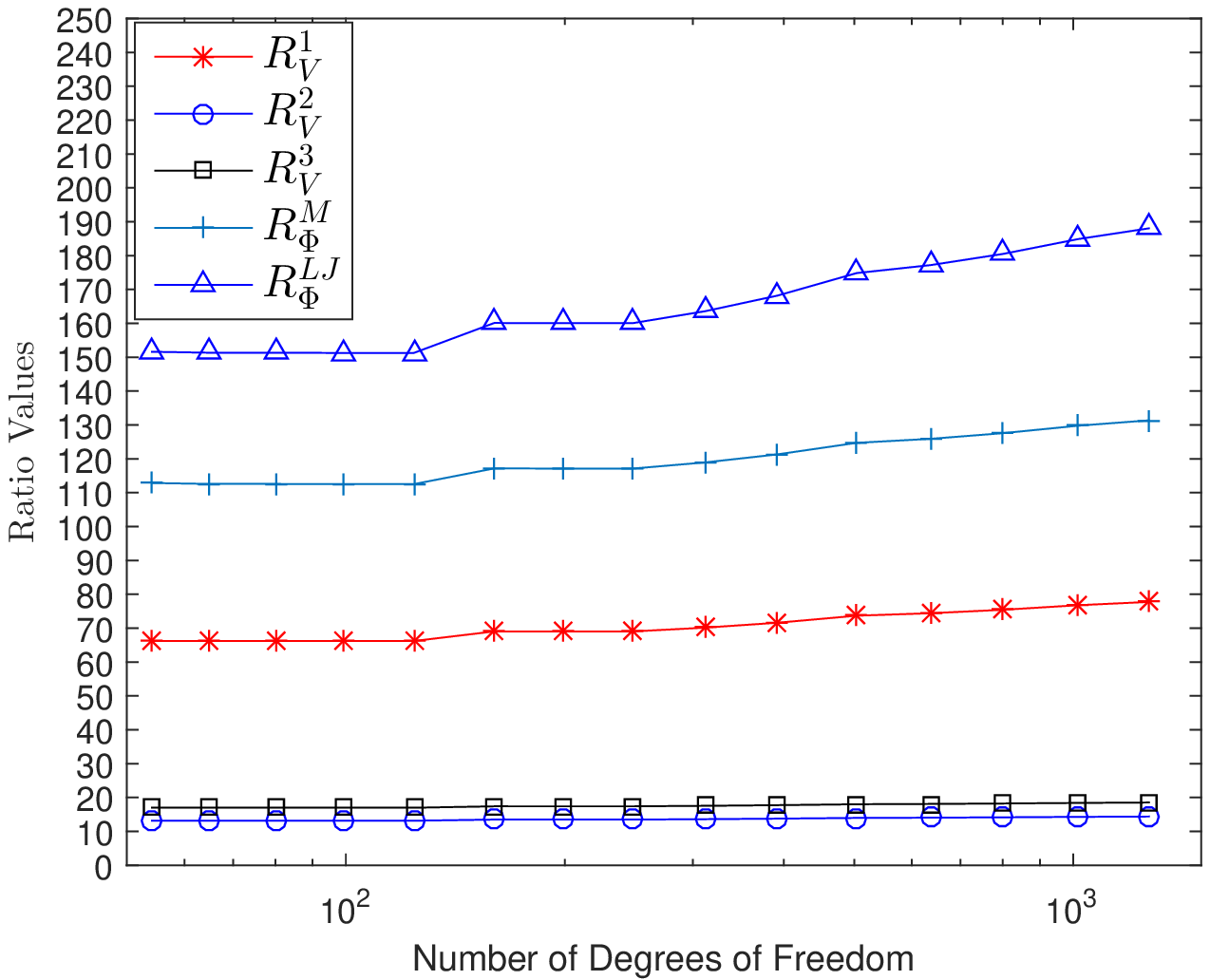}
  \caption{Second-order derivative ratios $R_V^1$, $R_V^2$ and $R_V^3$.}
  \label{Fig:derivative_ratios}
\end{figure}

\subsection{Numerical Results}
\label{Sec: numerical results}
We summarize the results of the computations with meshes generated by the adaptive algorithm with both the residual based and the hybrid error estimators. In addition, we plot the ratios between the maximum and minimum values of different groups of the second-order derivatives $\partial_{ij}V$ to support the assumptions we proposed \ref{ass: assump_on_derivs_V}.

\begin{enumerate}
\item In \ref{Fig:Relative_Error_in_Gradient} we display the
   relative errors for the two types of mesh generation algorithms. The
  differences between the results produced by the two algorithms is
  negligible. We observe the convergence rates close to $(\#{\rm DoF})^{-1}$ for both algorithms as expected.

\item The efficiency indicators (estimate divided by the actual error) are displayed in  \ref{Fig:Efficiency_Factor_Gradient}, from which we observe that both the residual based error estimator and the hybrid error estimator possess good efficiency throughout the computations. The hybrid estimator has a slightly larger efficiency factor because of the estimated constants $C^{\z-\cg}$ and $C^{\z-\mo}$ defined in \eqref{hybrid estimator constants} whose actual values are difficult (if not impossible) to track.

\item  \ref{Fig:derivative_ratios} displays the ratios of second-order derivatives $\partial_{ij}V$ and $\partial_{ij}\Phi^\a$. In particular, we define
\begin{align}
R_V^1 &:= \inf_{\substack{\ell \in \Ls, \\ i = \pm1}} |\partial_{ii}V(y^{\ac}_\ell)| / \sup_{\substack{\ell \in \Ls,\\  (i,j) \in \Ss^{\NNN_1} }} |\partial_{ij}V(y^{\ac}_\ell)|, \label{def: assumption ratio 1 many body}\\
R_V^2 &:= \inf_{\substack{\ell \in \Ls, \\  (i,j) \in \Ss^{\NNN_1} }} |\partial_{ii}V(y^{\ac}_\ell)| / \sup_{\substack{\ell \in \Ls,\\  (i,j) \in\Ss^{\NNN_2} }} |\partial_{ij}V(y^{\ac}_\ell)|, \\
R_V^3 &:= \inf_{\substack{\ell \in \Ls, \\  (i,j) \in \Ss^{\NNN_2} }} |\partial_{ii}V(y^{\ac}_\ell)| / \sup_{\substack{\ell \in \Ls,\\  (i,j) \in \Ss^{\NNN_3} }} |\partial_{ij}V(y^{\ac}_\ell)|, 
\end{align}
where where $\Ss^{\NNN_1},  \Ss^{\NNN_2}$ and $ \Ss^{\NNN_3}$ are defined by \eqref{def: sets of interactions 1}, \eqref{def: sets of interactions 2} and \eqref{def: sets of interactions 3}. We find that the nearest-neighbour derivatives $|\partial_{11}V(\gb)|$ and $|\partial_{33}V(\gb)|$ are significantly larger than other types of derivatives in terms of the absolute value which essentially reflects the nearest-neighbor dominant feature of our interaction potential.

\item In \ref{Fig:derivative_ratios}, we also test the assumption for two pair potential cases as $R^{\M}_{\Phi}$ and $R^{\LJ}_{\Phi}$, which are defined by
\begin{equation}
R_\Phi^{\M} := \inf_{\substack{\ell \in \Ls, \\ i = \pm1}} |\partial_{ii}\Phi^{\M}(y^{\ac}_\ell)| / \sup_{\substack{\ell \in \Ls,\\  (i,j) \in \Ss^{\NNN_1} }} |\partial_{ij}\Phi^{\M}(y^{\ac}_\ell)|, 
\end{equation}
and 
\begin{equation}
R_\Phi^{\LJ} := \inf_{\substack{\ell \in \Ls, \\ i = \pm1}} |\partial_{ii}\Phi^{\LJ}(y^{\ac}_\ell)| / \sup_{\substack{\ell \in \Ls,\\  (i,j) \in \Ss^{\NNN_1} }} |\partial_{ij}\Phi^{\LJ}(y^{\ac}_\ell)|, 
\end{equation}
where $\Phi^\M$ and $\Phi^\LJ$ respectively denote Morse potential and Lennard-Jones potential.  The explicit form of these two types of potential are given by
\begin{equation}
\Phi^\gamma_{\ell}(y) = \frac{1}{2}[ \phi^\gamma(y'_{\ell+1}) +  \phi^\gamma(y'_{\ell}) + \phi^\gamma(y'_{\ell+1}+y'_{\ell+2}) + \phi^\gamma(y'_{\ell-1}+ y'_{\ell} ) ], \  \gamma \in \{\M, \LJ\},
\end{equation}
and 
\begin{align}
\phi^\M(r) &= e^{-10(r-1)} - 2e^{-5(r-1)}, \\
\phi^\LJ(r)&= r^{-12} - 2r^{-6}. 
 \end{align}

We note that for pair potentials all the cross derivative terms vanish and only one ratio is related which is essentially equivalent to $R^1_V$ in \ref{def: assumption ratio 1 many body} for the many-body case.   
\end{enumerate}

We can conclude that both a posteriori error indicators can be used to select meshes that are
quasi-optimal at least for our model problem  (also c.f. \cite[Section 6 Figure 1]{CO_HW_ACC_1D_A_Posteriori} for the discussion of quasi-optimality).


\section{Conclusion}

We have derived and analyzed two different types of \emph{a posteriori} error estimators for the GRAC a/c coupling method in 1D. The \emph{residual based} error estimator is proved to be efficient that provides both the upper bound globally and the lower bound (up to some generic constants) locally for the true error between the solution of the coupling model and the atomistic model. Our analysis applies to generic energy-based a/c methods and interaction potentials. We then analyzed the \emph{gradient recovery} type error estimator which is easy to implement and hence is widely used in computational material science and engineering community. We proved the equivalence between the \emph{residual based} and (a modified) \emph{gradient recovery} error estimators in the continuum region. However, in order to keep the error estimator sharp on the interface which is important in the adaptive solution of our coupling model, we combine the two types of error estimators to propose a \emph{hybrid} error estimator. Our numerical experiments then indicate that both estimators give the correct convergence rate and illustrate the efficiency of the estimators.

We conclude by pointing out the merit of the extension of our analysis to higher dimensional problems. The residual based a posteriori error estimate for GRAC model in 2D has been proposed in \cite{HW_ML_PL_LZ_2D_A_Post} where the complexity of implementation is encountered. One particular difficulty is the implementation and the computation of the model error along each finite element boundary which requires the tracing of discrepancy of the geometry of the underline lattice and the coarse-grained mesh. However, our analysis of the efficiency of the residual based estimator and derivation of the hybrid estimator essentially imply that influence of the model error in the continuum region may be marginal compared with the coarse-graining error, especially on the large elements. We believe that similar phenomenon appears in higher dimensions and can be rigorously proved with careful (but maybe much more involved) consideration, for which our 1D analysis provides a valuable stepping-stone. Moreover, the \emph{hybrid} error estimator may also be extended to higher dimensions where the effect of the interface plays much more important role in adaptivity (c.f. \cite{HW_ML_PL_LZ_2D_A_Post}) and can be used for more efficient but reliable application of adaptive atomistic-to-continuum coupling methods.




\appendix
\section{Detailed Formulations by Some Symbols}
\subsection{Details on the deformation gradient $Dy^h$}
We write out the specific form of $Dy^h$. Inside the continuum region where $k \in \mathring{\Ks}^c$, we define the deformation gradient $Dy_{\ell}^h$, whose support index is $\{\ell_{k-1}+2,...,\ell_{k}+1\}$, by
\begin{align*}
Dy_{\ell}^h=
\left\{
\begin{array}{l l}
(y_h\rq{}|_{T_{k}},y_h\rq{}|_{T_{k}}+y_h\rq{}|_{T_{k+1}},-y_h\rq{}|_{T_{k}},-2y_h\rq{}|_{T_{k}}),   &\ell=\ell_{k}-1,\\
(y_h\rq{}|_{T_{k+1}},2y_h\rq{}|_{T_{k+1}},-y_h\rq{}|_{T_{k}},-2y_h\rq{}|_{T_{k}}), &\ell=\ell_{k},\\
(y_h\rq{}|_{T_{k+1}},2y_h\rq{}|_{T_{k+1}},-y_h\rq{}|_{T_{k+1}},-y_h\rq{}|_{T_{k}}-y_h\rq{}|_{T_{k+1}}), &\ell=\ell_{k}+1, \\
( y_h\rq{}|_{T_{k}},2 y_h\rq{}|_{T_{k}},- y_h\rq{}|_{T_{k}},-2y_h\rq{}|_{T_{k}}),   &\text{otherwise}.
\end{array} \right.
\end{align*}

Around the interface, we have
\begin{align*}
Dy_{\ell}^h=
\left\{
\begin{array}{l l}
(y_h\rq{}|_{T_{K_1-1}},y_h\rq{}|_{T_{K_1-1}}+y_h\rq{}|_{T_{K_1}},-y_h\rq{}|_{T_{K_1-2}},-2y_h\rq{}|_{T_{K_1-2}}),   &\ell = \ell_{K_1-2},\\
(y_h\rq{}|_{T_{K_2+3}},2y_h\rq{}|_{T_{K_2+3}},-y_h\rq{}|_{T_{K_2+2}},-y_h\rq{}|_{T_{K_2+1}}-y_h\rq{}|_{T_{K_2+2}}),   &\ell=\ell_{K_2+2}.
\end{array} \right.
\end{align*}

Finally, for the atoms inside the atomistic region where $k = K_1-1,\dots,K_2+1$, the formula of $Dy^h$ is simply given by
\begin{equation}
Dy_{\ell_k}^h = (y_h\rq{}|_{T_{k+1}},y_h\rq{}|_{T_{k+1}}+y_h\rq{}|_{T_{k+2}},-y_h\rq{}|_{T_{k}},-y_h\rq{}|_{T_{k-1}}-y_h\rq{}|_{T_{k}}). \notag
\end{equation}

\subsection{Details on the stress tensors of the coupling model $\bar{\sigma}^{ac}_{k}(y_h)$}
\label{app: coupling model stress tensor}
Upon defining
\begin{align*}
&\partial_{j}V_{-}^{i}(Dy_{\ell}):=\partial_{j}V(D_{1}y_{\ell},D_{2}y_{\ell},D_{-1}y_{\ell},2D_{-1}y_{\ell}),\\
&\partial_{j}V_{+}^{i}(Dy_{\ell}):=\partial_{j}V(D_{1}y_{\ell},2D_{1}y_{\ell},D_{-1}y_{\ell},D_{-2}y_{\ell}),
\end{align*}
the elementwise stress tensors of the coupling model $\bar{\sigma}^{ac}_{k}(y_h)$ is given by
\begin{align}
\label{eq: stress tensor ac elem wise}
\bar{\sigma}^{\ac}_{k}(y_h)=
\left\{
\begin{array}{l l}
\sigma^{\a}_{\ell_{k}}(y_h),   & k\in \widetilde{\Ks}^a ,\\
W'(y_h\rq{}|_{T_k}),   & k\in \widetilde{\Ks}^c  ,\\
\frac{1}{2}W'(y_h\rq{}|_{T_{k}})-\partial_{-1}V^{i}_{-}(D y^h_{\ell_{k}})-2\partial_{-2}V^{i}_{-}(Dy^h_{\ell_{k}}), &k=K_{1}-1,\\
\partial_{1}V^{i}_{-}(Dy^h_{\ell_{k-1}})-\partial_{-1}V^{i}_{-}(Dy^h_{\ell_{k}})\\
 +\partial_{2}V^{i}_{-}(Dy^h_{\ell_{k-1}}) -2\partial_{-2}V^{i}_{-}(Dy^h_{\ell_{k}})-\partial_{-2}V(Dy^h_{\ell_{k+1}}),&k=K_{1},\\
\partial_{1}V^{i}_{-}(Dy^h_{\ell_{k-1}})-\partial_{-1}V(Dy^h_{\ell_{k}}) +\partial_{2}V^{i}_{-}(Dy^h_{\ell_{k-2}})\\
+\partial_{2}V^{i}_{-}(Dy^h_{\ell_{k-1}}) -\partial_{-2}V(Dy^h_{\ell_{k}})-\partial_{-2}V(Dy^h_{\ell_{k+1}}),& k=K_{1}+1,\\
\partial_{1}V(Dy^h_{\ell_{k-1}})-\partial_{-1}V(Dy^h_{\ell_{k}}) +\partial_{2}V^{i}_{-}(Dy^h_{\ell_{k-2}})\\
+ \partial_{2}V(Dy^h_{\ell_{k-1}})-\partial_{-2}V(Dy^h_{\ell_{k}})-\partial_{-2}V(Dy^h_{\ell_{k+1}}),  &k=K_{1}+2,\\
\partial_{1}V(Dy^h_{\ell_{k-1}})-\partial_{-1}V(Dy^h_{\ell_{k}}) +\partial_{2}V(Dy^h_{\ell_{k-1}})\\+
\partial_{2}V(Dy^h_{\ell_{k-2}})
-\partial_{-2}V(Dy^h_{\ell_{k}})-\partial_{-2}V^{+}_{i}(Dy^h_{\ell_{k+1}}),  &k=K_{2}-1,\\
\partial_{1}V(Dy^h_{\ell_{k-1}})-\partial_{-1}V_{+}^{i}(Dy^h_{\ell_{k}})+\partial_{2}V(Dy^h_{\ell_{k-1}})\\
+\partial_{2}V(Dy^h_{\ell_{k-2}})
-\partial_{-2}V_{+}^{i}(Dy^h_{\ell_{k}})-\partial_{-2}V_{+}^{i}(Dy^h_{\ell_{k+1}}), &k=K_{2},\\
\partial_{1}V_{+}^{i}(Dy^h_{\ell_{k-1}})-\partial_{-1}V_{+}^{i}(Dy^h_{\ell_{k}}) \\
+\partial_{2}V(Dy^h_{\ell_{k-2}}) + 2\partial_{2}V_{+}^{i}(Dy^h_{\ell_{k-1}}) -\partial_{-2}V_{+}^{i}(Dy^h_{\ell_{k}}),&k=K_{2}+1,\\
\frac{1}{2}W'(y_h\rq{}|_{T_{k}})+\partial_{1}V^{i}_{+}(D y^h_{\ell_{k-1}})+2\partial_{2}V^{i}_{+}(Dy^h_{\ell_{k-1}}), &k=K_{2}+2.
\end{array} \right.
\end{align}
where $\vec{F} := (F, 2F, -F, -2F)$, $W'(F) := \sum_{j = \pm1} (-1)^{\frac{j-1}{2}}[\partial_j V(\vec{F})+2\partial_{2\times j} V(\vec{F})]$ and $\sigma^{\a}_{\ell_{k}}(y_h)$ is given in \eqref{eq: Q^a_l}.
%
%
%

\section{Proofs for Some Auxiliary Results}
\subsection{Proof for propositions \eqref{Pro_1_2}\eqref{Pro_1_3}\eqref{Pro_1_4}}
\label{Proof Pro}
Here, we give the proof for the two propositions \eqref{Pro_1_2}\eqref{Pro_1_3}\eqref{Pro_1_4} and estimate the values of $C_1$, $C_2$ and $C_3$.
\begin{proof}
In the following proof, we use the symbols $\ell^k_1 = \ell_{k-1}+3$ and $\ell^k_2 = \ell_{k}-3$ for simplification.

For proposition \eqref{Pro_1_2}. We can compute the discrete derivative ${b^{T_k}_\ell}'$ as
\begin{align*}
{b^{T_k}_\ell}'=
\left\{
\begin{array}{l l}
\frac{4}{h_{T_k}^2}(x_1^k+x_2^k+\eps-2\eps \ell), &\ell = \ell^k_1 +1,\dots,\ell^k_2,\\
0, &\text{ otherwise }.
\end{array} \right.
\end{align*}

Therefore, we have
\begin{equation}
\|{b^{T_k}}'\|^2_{\ell_{\eps}^{2}(\mrLs_{T_{k}})} = \frac{16\eps^3}{\mathring{h}^4_{T_k}}\sum_{\ell= \ell^k_1 +1}^{\ell^k_2}[4\ell^2 + 4(\ell^k_1 + \ell^k_2+1)\ell +  (\ell^k_1 + \ell^k_2 +1)^2].
\end{equation}

Apply the following facts:
\begin{align}
\sum_{i=1}^n i =& \frac{i(i+1)}{2}; \label{eq: 1th sum} \\
\sum_{i=1}^n i^2 =& \frac{i(i+1)(2i+1)}{6},  \label{eq: 2ed sum}
\end{align}
and then we have
\begin{equation}
\|{b^{T_k}}'\|^2_{\ell_{\eps}^{2}(\mrLs_{T_{k}})} = \frac{16}{3}[1-(\frac{\eps}{\mathring{h}_{T_k}})^2]h^{-1}_{T_k},
\end{equation}
which leads us to the result \eqref{Pro_1_2}.

For proposition \eqref{Pro_1_3}. Similarly we compute
\begin{equation}
\|b^{T_k}\|^2_{\ell_{\eps}^{2}(\mrLs_{T_{k}})} = \frac{16\eps^5}{\mathring{h}_{T_k}^4}\sum_{\ell =\ell^k_1+1}^{\ell^k_2} \{\ell^4 -2(\ell^k_1 + \ell^k_2)\ell^3 + [(\ell^k_1)^2 + 4\ell^k_1 \ell^k_2 + (\ell^k_2)^2]\ell^2 -2 \ell^k_1 \ell^k_2(\ell^k_1 + \ell^k_2)\ell + (\ell^k_1)^2 (\ell^k_2)^2  \}.
\end{equation}

By the facts
\begin{align}
\sum_{i=1}^n i^3 =& \frac{1}{4}i^2 (i+1)^2; \\
\sum_{i=1}^n i^4 =& \frac{i(i+1)(2i+1)(3i^2 +3i -1)}{30},
\end{align}
together with \eqref{eq: 1th sum} and \eqref{eq: 2ed sum}, we have
\begin{equation}
\|{b^{T_k}}'\|^2_{\ell_{\eps}^{2}(\mrLs_{T_{k}})} = \frac{8}{15}[1-(\frac{\eps}{\mathring{h}_{T_k}})^4]\mathring{h}_{T_k},
\end{equation}
and the result \eqref{Pro_1_3} can then be directly obtained by taking square root on both sides of the equation above.

For proposition \eqref{Pro_1_4}. We directly calculate that
\begin{equation}
\eps \sum_{\ell =\ell^k_1+1}^{\ell^k_2} b^{T_k}_\ell = \frac{4\eps^3}{\mathring{h}^2_{T_k}}\sum_{\ell =\ell^k_1+1}^{\ell^k_2} [-\ell^2 + (\ell^k_1 +\ell^k_2)\ell - \ell^k_1 \ell^k_2].
\end{equation}

Again by \eqref{eq: 1th sum} and \eqref{eq: 2ed sum}, we have
\begin{equation}
\eps \sum_{\ell =\ell^k_1+1}^{\ell^k_2} b^{T_k}_\ell  = \frac{2}{3}[1-(\frac{\eps}{\mathring{h}_{T_k}})^2]\mathring{h}_{T_k},
\end{equation}
which gives us proposition \eqref{Pro_1_4}.
\end{proof}

\subsection{Proof for \eqref{projected_force}}
 \label{Proof_2}

\begin{proof}
We write the inner product $\<f,y_h\>_{\varepsilon}$ as
\begin{equation}
 \label{appendix: project force proof 1}
\<f,y_h\>_{\varepsilon} = \eps \sum_{\ell \in \Cs \backslash \{\ell_{K_2+2}\}} f_{\ell}y^h_{\ell} + \eps \sum_{\ell = \ell_{K_1-1}}^{\ell_{K_2+2}} f_{\ell}y^h_{\ell},
\end{equation}
where
\begin{align}
&\varepsilon\sum_{\ell\in \Cs \backslash\{ \ell_{K_{2}+2}\}}f_{\ell}y^h_{\ell} \notag \\
=&\varepsilon\sum_{k\in\Ks^c\backslash\{K_2+2\}}\sum_{\ell \in \Ls_{T_k}}[y_{\ell_{k-1}}+\frac{y_{\ell_{k}}-y_{\ell_{k-1}}}{h_{T_k}}(\ell-\ell_{k-1})\varepsilon]f_{\ell}\notag\\
=&\varepsilon\sum_{k\in\Ks^c\backslash\{K_2+2\}}\sum_{\ell \in \Ls_{T_k}}\frac{\ell-\ell_{k-1}}{h_{T_k}}\varepsilon f_{\ell}y_{\ell_{k}}+\varepsilon\sum_{k\in \Ks^{c}\backslash\{K_2+2\}}\sum_{\ell \in \Ls_{T_{k+1}}}(1-\frac{\ell-\ell_{k}}{h_{T_{k+1}}}\varepsilon)f_{\ell}y_{\ell_{k}}\notag\\
=&\varepsilon\sum_{k\in\mathring{\Ks}^c}\big\{\varepsilon\sum_{\ell \in \Ls_{T_k}}\frac{\ell-\ell_{k-1}}{h_{T_k}}\varepsilon f_{\ell}+\sum_{\ell \in \Ls_{T_{k+1}}}(1-\frac{\ell-\ell_{k}}{h_{T_{k+1}}}\varepsilon)f_{\ell}\big\}y_{\ell_{k}}\notag \\
 +&\varepsilon\sum_{\ell \in \Ls_{T_{K_1-2}}}\frac{\ell-\ell_{K_{1}-3}}{h_{K_{1}-2}}\varepsilon f_{\ell}y_{\ell_{K_{1}-2}}+\eps \sum_{\ell \in \Ls_{T_{K_2+3}}}(1-\frac{\ell-\ell_{K_{2}+2}}{h_{K_{2}+3}}\varepsilon)f_{\ell}y_{\ell_{K_{2}+2}},
  \label{appendix: project force proof 2}
\end{align}
and
\begin{equation}
  \label{appendix: project force proof 3}
\eps \sum_{\ell = \ell_{K_1-1}}^{\ell_{K_2+2}} f_{\ell}y^h_{\ell} = \eps \sum_{k = K_1-1}^{K_2+2} f_{\ell_k}y^h_{\ell_k}.
\end{equation}

Combining \eqref{appendix: project force proof 2} and \eqref{appendix: project force proof 3}, we can write \eqref{appendix: project force proof 1} into the form of
\begin{equation*}
\<f,y_h\>_{\varepsilon}=\sum_{k\in\mathscr{K}^{ac}}\eps \bar{f}_{k}y^h_{\ell_{k}},
\end{equation*}
where the projected force $\bar{f}_{k}$ has been given in \eqref{projected_force}.

\end{proof}

\subsection{Proof for \eqref{eq: zz node-element equiv}}
 \label{Proof_3}

\begin{proof}
To prove \eqref{eq: zz node-element equiv}, we only need to show that: for any $v_{h}\in\Us^h$, we have
\begin{equation}
 \label{equal_norm_1}
(v_h,v_h)_h \sim \|v_h \|^2_{\ell^2_{\eps}(\Cs \cup \Is \backslash \{\ell_{K_2}\})}.
\end{equation}

The right part of \eqref{equal_norm_1} can be easily computed as
\begin{equation*}
\|v_h \|^2_{\ell^2_{\eps}(\Cs \cup \Is \backslash \{\ell_{K_2}\})} = A^N + B^N,
\end{equation*}
where
\begin{align}
A^N :=& \sum_{k\in \Ks_{\Ts_h}^{c}} \big\{\frac{h_{T_k}}{3}[(v^{h}_{\ell_k})^2+v^{h}_{\ell_k}v^{h}_{\ell_{k-1}}+ (v^{h}_{\ell_{k-1}})^2]
  + \frac{\eps}{2}[(v^{h}_{\ell_k})^2-(v^{h}_{\ell_{k-1}})^2]
  + \frac{\eps^2}{3h_{T_k}}[v^{h}_{\ell_k}-v^{h}_{\ell_{k-1}}]^2 \big\}, \notag \\
B^N :=&  \frac{\eps}{2}\sum_{k \in \{K_1-2,K_2+2\}}(v^{h}_{\ell_k})^2 + \eps\sum_{k \in \{K_1-1,K_2+1\}}(v^{h}_{\ell_k})^2, \notag
\end{align}

while the left part
\begin{equation*}
(v_h,v_h)_{h}= A^{IP} + B^{IP},
\end{equation*}
where
\begin{align*}
A^{IP} :=&  \frac{1}{2} \sum_{k \in \Ks_{\Ts_h}^{c}} h_{T_k} [(v^{h}_{\ell_k})^2 + (v^{h}_{\ell_{k-1}})^2], \\
B^{IP} :=& \frac{\eps}{2}\sum_{k \in \{K_1-2,K_1-1,K_2+1,K_2+2\}} (v^{h}_{\ell_k})^2.
\end{align*}

By using the fact that $h_{T_k} \geq \eps$ and the mean value inequality, we have
\begin{align}
A^N \leq& \frac{1}{2}\sum_{k\in \Ks_{\Ts_h}^{c}}(h_{T_k}+\frac{\eps^2}{3h_{T_k}})[(v^{h}_{\ell_k})^2 + (v^{h}_{\ell_{k-1}})^2] \notag \\
\le& \frac{2}{3}\sum_{k\in \Ks_{\Ts_h}^{c}} h_{T_k} [(v^{h}_{\ell_k})^2 + (v^{h}_{\ell_{k-1}})^2] = \frac{4}{3}A^{IP}
  \label{appendix: norm equal proof 1}
\end{align}
and
\begin{equation}
A^N \geq \frac{1}{6}\sum_{k\in \Ks_{\Ts_h}^{c}} h_{T_k} [(v^{h}_{\ell_k})^2 + (v^{h}_{\ell_{k-1}})^2] = \frac{1}{3}A^{IP}.
  \label{appendix: norm equal proof 2}
\end{equation}

Combining \eqref{appendix: norm equal proof 1} with \eqref{appendix: norm equal proof 2}, we have
\begin{equation*}
\frac{3}{4}A^N \le A^{IP} \le 3A^N.
\end{equation*}

On the other hand, it is straightforward that
\begin{equation*}
\frac{1}{2}B^N \le B^{IP} \le B^N.
\end{equation*}

Therefore, we obtain the equivalence as
\begin{equation*}
\frac{1}{2}\|v_h \|^2_{\ell^2_{\eps}(\Cs \cup \Is \backslash \{\ell_{K_2}\})} \leq (v_h,v_h)_h \leq 3\|v_h \|^2_{\ell^2_{\eps}(\Cs \cup \Is \backslash \{\ell_{K_2}\})}
\end{equation*}

\end{proof}

\subsection{Proof for the interface case in lemma \ref{lemma: Q^ac equivalence zz}}
\label{appendix: lemma Q^ac equal zz interface}
\begin{proof}
By the definition of $\sigma^{\ac}_{K_1-2}(y_{\ac})$ and $\sigma^{\ac}_{K_1-1}(y_{\ac})$ and the mean value theorem, we have
\begin{align}
&\sigma^{\ac}_{K_1-2}(y_{\ac}) - \sigma^{\ac}_{K_1-1}(y_{\ac}) = [\frac{1}{2}W''(\xi_{K_1-2}) + \partial_{-1,-1} V(\vec{\zeta}_{-1,-1}) - \partial_{-1,1} V(\vec{\zeta}_{-1,1})\notag \\
 &- 2\partial_{-1,2} V(\vec{\zeta}_{-1,2}) + \partial_{-1,-2} V(\vec{\zeta}_{-1,-2})
 - 2\partial_{-2,1} V(\vec{\zeta}_{-2,1}) - 4\partial_{-2,2} V(\vec{\zeta}_{-2,2}) + 2\partial_{-2,-1} V(\vec{\zeta}_{-2,-1}) \notag \\
&+ 2\partial_{-2,-2} V(\vec{\zeta}_{-2,-2})] (y_h'|_{T_{K_1-2}}-y_h'|_{T_{K_1-1}}) 
 +[-\partial_{-1,1} V(\vec{\zeta}_{-1,1})- 2\partial_{-1,2} V(\vec{\zeta}_{-1,2})\notag \\
&-2\partial_{-2,1} V(\vec{\zeta}_{-2,1})-4\partial_{-2,2} V(\vec{\zeta}_{-2,2})] (y_h'|_{T_{K_1-1}}-y_h'|_{T_{K_1}}) \notag \\
 &+[-\partial_{-1,2} V(\vec{\zeta}_{-1，2})-2\partial_{-2,2} V(\vec{\zeta}_{-2,2})] (y_h'|_{T_{K_1}}- y_h'|_{T_{K_1+1}}),
 \end{align}
where $\xi_{K_1-2}, (\zeta_{ij})_m \in \bigcup_{k \in \{K_1-2,K_1-1,K_1\}} \conv(\nabla y_h|_{T_{k}},\nabla y_h|_{T_{k+1}})$. Now we apply the assumption \ref{ass: assump_on_derivs_V} and by the similar analysis as that in the proof for lemma \ref{lemma: Q^ac equivalence zz}, we obtain
\begin{align}
\frac{9}{4}[\partial_{1,1}V(\vec{\underline{\xi}}_{K_1-2})+\partial_{3,3}V(\vec{\underline{\xi}}_{K_1-2})]^2 (\eta^{\z}_{K_1-2})^2 &\leq \th_{K_1-2}|\sigma^{\ac}_{K_1-2}(y_{\ac}) - \sigma_{K_1-1}^{ac}(y_{\ac})|^2 \notag \\
\leq& (\frac{5\kappa}{4\kappa-2})^2 [\partial_{1,1}V(\vec{\underline{\xi}}_{K_1-2})+\partial_{3,3}V(\vec{\underline{\xi}}_{K_1-2})]^2 (\eta^{\z}_{K_1-2})^2.
\end{align}

Note that here $\partial_{1,1}V(\vec{\underline{\xi}}_{K_1-2})+\partial_{3,3}V(\vec{\underline{\xi}}_{K_1-2})>0$ by \eqref{eq: assump_on_derivs_V_3}. Again we apply the assumption \ref{ass: assump_on_derivs_V}, then we observe that the equivalence \eqref{eq: Q^ac equivalence zz} still holds when $k = K_1-2$.

The analysis for the interface case $k = K_2+2$ is almost same and thus we omit its proof here.
\end{proof}
\subsection{Proof for the statement in Remark \ref{remark: oscillation term}}
\label{sec: higher order of the data oscillation}
We show that the oscillated term $h_{T_k}\|f-\bar{f}_{T_k}\|_{\ell^2_\eps(\Ls_{T_k})}$ and $\th_{T_k} \|f - \bar{f}_{\tom_k}\|_{{\ell^{2}_{\eps}}(\Ls_{\widetilde{\omega}_k})}$ are both high-order compared with $\eta_{\cg}$, where $\bar{f}_{T_k}:= \sign(f|_{T_k})\frac{\|f\|_{L^2(T_k)}}{\sqrt{h_{T_k}}}$ and the definition of $\bar{f}_{\tom_k}$ has been given in lemma \ref{lemma: cg equivalence Q^ac}.
\begin{proof}
For the term $h_{T_k}^2\|f-\bar{f}_{T_k}\|^2_{\ell^2_\eps(\Ls_{T_k})}$.

We first consider the case where $f>0$ and thus $\bar{f}_{T_k}>0$ by its definition. Upon introducing $e_{\ell} := |f_\ell -\bar{f}_{T_k}|$, we have
\begin{equation}
\label{appendix: osc 1}
\bar{f}_{T_k} e_\ell \le e_\ell (f_\ell + \bar{f}_{T_k}) = |f^2_{\ell}-f^2_{T_k}| = \frac{1}{h_{T_k}}|\int_{T_k}(f^2_\ell - f^2)dx|.
\end{equation}

Now, we expand $f(x)$ at the point $(\eps \ell_{k-1} + \eps j)$:
\begin{equation*}
f(x) = f_{\ell_{k-1}+j} + f'(\xi^{(j)})[x-(\eps \ell_{k-1} + \eps j)]\text{~for some~}\xi^{(j)}\in T_k,
\end{equation*}
which is immediately followed by
\begin{align*}
\frac{1}{h_{T_k}}|\int_{T_k}(f^2_{\ell_{k-1}+j} - f^2)dx| =& |f'(\eta_1^{(j)})f_{\ell_{k-1}+j}(h_{T_k} - 2\eps j) + \frac{1}{3}[f'(\eta_2^{(j)})]^2(h^2_k -3\eps j h_{T_k} + 3\eps^2 j^2)|  \\
\le& M_k^2 (|h_{T_k} - 2\eps j|+ \frac{1}{3}|h^2_k -3\eps j h_{T_k} + 3\eps^2 j^2|)
\end{align*}
for $j=1,2,\dots,|\Ls_{T_k}|$, where $\eta^{(j)}_1, \eta^{(j)}_2 \in T_k$ and $M_k := \max\{ \max_{y\in T_k}|f(y)|, \max_{y\in T_k}|f'(y)| \}$. It is easy to check that $|h_{T_k} - 2\eps j|\le h_{T_k}$ and $\frac{1}{4}h^2_k \le |h^2_k -3\eps j h_{T_k} + 3\eps^2 j^2| \le h^2_k$, thus we further have
\begin{equation}
\label{appendix: osc 2}
\frac{1}{h_{T_k}}|\int_{T_k}(f^2_{\ell_{k-1}+j} - f^2)dx| \le M^2_k (\frac{1}{3}h^2_k + h_{T_k}).
\end{equation}

Plugging \eqref{appendix: osc 2} into \eqref{appendix: osc 1} will give us
\begin{equation*}
e_{\ell_{k-1}+j} \le \frac{M^2_k}{\bar{f}_{T_k}}(\frac{1}{3}h^2_k + h_{T_k})
\end{equation*}
and therefore the oscillated term $h_{T_k}^2\|f-\bar{f}_{T_k}\|^2_{\ell^2_\eps(\Ls_{T_k})}$ is indeed a high-order term compared with $(\eta^\cg_{T_k})^2 = \frac{1}{2}h^3_k (\bar{f}_{T_k})^2$ since
\begin{equation*}
h_{T_k}^2\|f-\bar{f}_{T_k}\|^2_{\ell^2_\eps(\Ls_{T_k})} = h^2_k \eps \sum^{|\Ls_{T_k}|}_{j=1}e^2_{\ell_{k-1}+j}\le (\frac{M^2_k}{\bar{f}_{T_k}})^2(h_{T_k}^5 + \frac{2}{3}h^6_k + \frac{1}{9}h^7_k).
\end{equation*}

We omit the proof for the case $f<0$ where we simply apply the similar analysis, then we can reach to the conclusion that $h_{T_k}^2\|f-\bar{f}_{T_k}\|^2_{\ell^2_\eps(\Ls_{T_k})}\sim o((\eta^\cg_{T_k})^2)$.

For the term $\th_{T_k}^{2} \|f - \bar{f}_{\tom_k}\|^{2}_{{\ell^{2}_{\eps}}(\Ls_{\widetilde{\omega}_k})}$.

We consider $k\in\Ks^c\backslash\{K\}$, that is, we do not consider the node on the periodic boundary of the chain. In this way, $\sign(\bar{f}_{T_k})=\sign(\bar{f}_{T_{k+1}})=\sign(f|_{\tilde{\omega}_k})$.

Due to the fact that the formula of $\bar{f}_{\tom_k}$ inside the continuum region is different from that near the interface, we first given the proof for $k\in \mathring{\Ks}^c$. Similar as the analysis for the oscillated term $h_{T_k}^2\|f-\bar{f}_{T_k}\|^2_{\ell^2_\eps(\Ls_{T_k})}$, we first consider the case where $f|_{\tilde{\omega}_k}>0$ and therefore both $\bar{f}_{T_k}$ and $\bar{f}_{T_{k+1}}$ are positive. By the definition of $\bar{f}_{\tom_k}$ and $\eta^\cg_k$ for $k\in \mathring{\Ks}^c$,
\begin{align}
\th_{T_k}^{2} \|f - \bar{f}_{\tom_k}\|^{2}_{{\ell^{2}_{\eps}}(\Ls_{\widetilde{\omega}_k})} =& \th_{T_k}^{2} \|\frac{h_{T_k}}{2h_{\tilde{\omega}_k}}(f -\bar{f}_{T_k})+ \frac{h_{T_{k+1}}}{2h_{\tilde{\omega}_k}}(f-\bar{f}_{T_{k+1}})\|^{2}_{{\ell^{2}_{\eps}}(\Ls_{\widetilde{\omega}_k})} \notag \\
\le& 2h_{\tilde{\omega}_k}^2[\|\frac{h_{T_k}}{2h_{\tilde{\omega}_k}}(f -\bar{f}_{T_k})\|^{2}_{{\ell^{2}_{\eps}}(\Ls_{\widetilde{\omega}_k})} + \|\frac{h_{T_{k+1}}}{2h_{\tilde{\omega}_k}}(f -\bar{f}_{T_{k+1}})\|^{2}_{{\ell^{2}_{\eps}}(\Ls_{\widetilde{\omega}_k})}] \notag \\
=& \frac{1}{2}h_{T_k}^2\sum_{T \subset \tilde{\omega}_k}\|f-\bar{f}_{T_k}\|^2_{\ell^2_\eps(\Ls_{T})} + \frac{1}{2}h_{T_{k+1}}^2\sum_{T \subset \tilde{\omega}_k}\|f-\bar{f}_{T_{k+1}}\|^2_{\ell^2_\eps(\Ls_{T})} \notag \\
=& \frac{1}{2}h^2_k\|f-\bar{f}_{T_k}\|^2_{\ell^2_\eps(\Ls_{T_{k+1}})}+\frac{1}{2}h^2_{k+1}\|f-\bar{f}_{T_{k+1}}\|^2_{\ell^2_\eps(\Ls_{T_{k}})} + o((\eta^\cg_k)^2), \notag
\end{align}
where we have applied the previous conclusion $h_{T_k}^2\|f-\bar{f}_{T_k}\|^2_{\ell^2_\eps(\Ls_{T_k})}\sim o((\eta^\cg_{T_k})^2)$. Now we only need to show that
\begin{equation}
\label{appendix: osc 3}
 \frac{1}{2}h^2_k\|f-\bar{f}_{T_k}\|^2_{\ell^2_\eps(\Ls_{T_{k+1}})}+\frac{1}{2}h^2_{k+1}\|f-\bar{f}_{T_{k+1}}\|^2_{\ell^2_\eps(\Ls_{T_{k}})} \sim o((\eta^\cg_k)^2).
\end{equation}

For the term $\frac{1}{2}h^2_k\|f-\bar{f}_{T_k}\|^2_{\ell^2_\eps(\Ls_{T_{k+1}})}$, we expand $f(x)$ at the point $(\eps\ell_k + \ell j)$:
\begin{equation*}
f(x) = f_{\ell_{k}+j} + f'(\zeta^{(j)})[x-(\eps \ell_{k} + \eps j)]\text{~for some~}\zeta^{(j)}\in T_{k+1},
\end{equation*}
and then we mimic the the previous process and obtain the similar result as
\begin{equation*}
\frac{1}{h_{T_k}}|\int_{T_k}(f^2_{\ell_{k}+j} - f^2)dx| \le  {M'_k}^{2} (|h_{T_k} - 2\eps j|+ \frac{1}{3}|h^2_k -3\eps j h_{T_k} + 3\eps^2 j^2|),
\end{equation*}
for $j=1,2,\dots,|\Ls_{T_{k+1}}|$, where $M'_k := \max\{ \max_{y\in T_{k+1}}|f(y)|, \max_{y\in T_{k+1}}|f'(y)| \}$. Note that $|h_{T_k} - 2\eps j|\le h_{T_k} + h_{T_{k+1}}$ and $\frac{1}{4}h^2_k \le |h^2_k -3\eps j h_{T_k} + 3\eps^2 j^2| \le 3(h_{T_k} + h_{T_{k+1}})^2$, we obtain
\begin{equation*}
e_{\ell_{k}+j} \le \frac{{M'_k}^{2}}{\bar{f}_{T_k}}(4\th^2_k + 2h_{\tilde{\omega}_k}).
\end{equation*}

Therefore,
\begin{align}
\frac{1}{2}h_{T_k}^2\|f-\bar{f}_{T_k}\|^2_{\ell^2_\eps(\Ls_{T_{k+1}})} = \frac{1}{2}h^2_k \eps \sum^{|\Ls_{T_{k+1}}|}_{j=1}e^2_{\ell_{k}+j}\le & 2h_{T_k}^2h_{T_{k+1}}(\frac{{M'_k}^{2}}{\bar{f}_{T_k}})^2(2h_{\tilde{\omega}_k}^2 + h_{\tilde{\omega}_k})^2 \notag \\
 \le& \frac{2}{\kappa^3}(\frac{{M'_k}^{2}}{\bar{f}_{T_k}})^2(h_{\tilde{\omega}_k}^5 + 4h_{\tilde{\omega}_k}^6 + 4h_{\tilde{\omega}_k}^7).
\label{appendix: osc 4}
\end{align}

Note that the last step in \eqref{appendix: osc 4} is a direct application of the mesh regularity assumption \eqref{eq: mesh regular}. The mesh regularity further gives us
\begin{align}
(\eta^\cg_k)^2 = \frac{1}{2}[(\eta^\cg_{T_k})^2 + (\eta^\cg_{T_{k+1}})^2] =& \frac{1}{4}[h^3_k(\bar{f}_{T_k})^2 + h^3_{k+1}(\bar{f}_{T_{k+1}})^2] \notag \\
\geq& \frac{1}{4}(\frac{2\kappa -1}{\kappa})^3h_{\tilde{\omega}_k}^3[(\bar{f}_{T_k})^2 + (\bar{f}_{T_{k+1}})^2]£¬
\label{appendix: osc 5}
\end{align}
for each $k \in \mathring{\Ks}^c$. Comparing \eqref{appendix: osc 4} with \eqref{appendix: osc 5} leads us to the result $\frac{1}{2}h^2_k\|f-\bar{f}_{T_k}\|^2_{\ell^2_\eps(\Ls_{T_{k+1}})}\sim o((\eta^\cg_k)^2)$. Without the detailed proof, we also give the conclusion $\frac{1}{2}h^2_{k+1}\|f-\bar{f}_{T_{k+1}}\|^2_{\ell^2_\eps(\Ls_{T_{k}})} \sim o((\eta^\cg_k)^2)$ by a similar analysis. Therefore, we have now obtained \eqref{appendix: osc 3} and thus finish the proof for the case $f|_{\tilde{\omega}_k}>0$. Again, we omit the proof for the case $f|_{\tilde{\omega}_k}<0$ since the process is almost the same.

The analysis above contains the proof the special interface case, where the formula of $\bar{f}_{\tom_k}$ is slightly different. In order to prevent the proof from being too tedious, we do not bother with the interface case proof.
\end{proof}

\subsection{Detailed proof for Theorem \ref{thm: equivalence of the model residual and the zz estimator}}
\label{appendix: proof of equivalence of zz and model residual}
\begin{proof}

\begin{table}[h]
\centering
\begin{tabular}{c c c c c} 
\hline\hline 
 \multicolumn{2}{c}{Type of derivatives} &Sign & $\max_{\gb \in E^{\times 4}}|\cdot|$ & $\min_{\gb \in E^{\times 4}}|\cdot|$
\\ [0.5ex]
\hline 
$\partial_{1,1}V(\gb)$ & $\partial_{-1,-1}V(\gb)$ & $+$ & 24.7302& 22.6129\\ 
$\partial_{1,-1}V(\gb)$ & $\partial_{-1,1}V(\gb)$ & $-$ & 0.27510& 0.26670\\
$\partial_{22}V(\gb)$ & $\partial_{-2,-2}V(\gb)$ & $-$ & 0.21399& 0.20367\\
$\partial_{12}V(\gb)$ & $\partial_{21}V(\gb)$ & $+$ & 0.01374& 0.01310\\
$\partial_{1,-2}V(\gb)$ & $\partial_{-2,1}V(\gb)$ & $-$ & 0.01374& 0.01310\\
$\partial_{2,-1}V(\gb)$ & $\partial_{-1,2}V(\gb)$ & $-$ & 0.01374& 0.01310\\
$\partial_{-1,-2}V(\gb)$ & $\partial_{-2,-1}V(\gb)$ & $+$ & 0.01374& 0.01310\\
$\partial_{2,-2}V(\gb)$ & $\partial_{-2,2}V(\gb)$ & $-$ & 0.00069& 0.00064\\[1ex] 
\hline 
\end{tabular}
 \caption{Range of the absolute value of the second-order derivative $\partial_{ij}V(\gb)$ where $\gb \in E^{\times 4}$ (the definition of the set $E^{\times 4}$ can be found in Section \ref{Sec: Notation}); ``$+$" (``$-$") indicates that the corresponding values are positive (negative).}
\label{Tab:2ed derivative}
\end{table}
We first look at one of the gradient jump terms. For the term $R^{mo}_{\ell}$, applying the mean value theorem allows us to obtain that
\begin{align}
R^{\mo}_{\ell_k - 2} &= \sigma^\a_{\ell_k - 2}(y^{\ac}) -  \sigma^{\ac}_{\ell_k - 2}(y^{\ac}) = \partial_{-2} V( y_h'|_{T_{k}},2 y_h'|_{T_{k}},- y_h'|_{T_{k}},-2 y_h'|_{T_{k}}) \notag \\
-&\partial_{-2} V(y_h'|_{T_{k}}, y_h'|_{T_{k}}+ y_h'|_{T_{k+1}},- y_h'|_{T_{k}},-2 y_h'|_{T_{k}}) \notag \\
&= -\partial_{-2,2}V( y_h'|_{T_{k}},2\xi ,- y_h'|_{T_{k}},-2 y_h'|_{T_{k}})( y_{h}'|_{T_{k+1}} -  y_{h}'|_{T_{k}}),
\label{appendix: theorem proof 1}
\end{align}
for some $\xi \in \conv(\nabla y_{h}|_{T_k} , \nabla y_{h}|_{T_{k+1}})$.

In order for simplification, we let the symbol $\partial_{-2,2}V(\vec{\zeta}^{-2,2}_{\ell_{k}-2})$ denote the partial derivative in \eqref{appendix: theorem proof 1}, where $\vec{\zeta}^{-2,2}_{\ell_{k}-2}$ can be regraded as a vector $(\zeta^{-2,2}_{\ell_{k}-2,1},2\zeta^{-2,2}_{\ell_{k}-2,2},-\zeta^{-2,2}_{\ell_{k}-2,3},-2\zeta^{-2,2}_{\ell_{k}-2,4})$ and $\zeta^{-2,2}_{\ell_{k}-2,j} \in \conv(\nabla y_{h}|_{T_k} , \nabla y_{h}|_{T_{k+1}})$. Therefore, we similarly can define other $\partial_{ij}V(\vec{\zeta}^{ij}_{\ell})$ and thus by the formulas of $\sigma^\a_{\ell}(y^{\ac})$ and $\sigma^{\ac}_{\ell_k}(y^{\ac})$ respectively given in \eqref{eq: Q^a_l} and \eqref{eq: stress tensor ac atom wise} we have
\begin{align}
&R^{\mo}_{\ell_k - 2} = -\partial_{-2,2}V(\vec{\zeta}^{-2,2}_{\ell_{k}-2})( y_{h}'|_{T_{k+1}} -  y_{h}'|_{T_{k}}); \notag \\
&R^{\mo}_{\ell_k - 1} = -[\partial_{-1,2}V(\vec{\zeta}^{-1,2}_{\ell_{k}-1})+\partial_{-2,2}V(\vec{\zeta}^{-2,2}_{\ell_{k}-1})+2\partial_{-2,2}V(\vec{\xi}^{-2,2}_{\ell_{k}-1})+\partial_{-2,1}V(\vec{\zeta}^{-2,1}_{\ell_{k}-1})]( y_{h}'|_{T_{k+1}} -  y_{h}'|_{T_{k}}); \notag \\
&R^{\mo}_{\ell_k} = [\partial_{1,2}V(\vec{\zeta}^{1,2}_{\ell_{k}})+\partial_{2,2}V(\vec{\zeta}^{2,2}_{\ell_{k}})-\partial_{-1,1}V(\vec{\zeta}^{-1,1}_{\ell_{k}})-2\partial_{-1,2}V(\vec{\zeta}^{-1,2}_{\ell_{k}}) \notag \\
-&\partial_{-2,1}V(\vec{\zeta}^{-2,1}_{\ell_{k}})-\partial_{-2,1}V(\vec{\xi}^{-2,1}_{\ell_{k}}) 
-2\partial_{-2,2}V(\vec{\zeta}^{-2,2}_{\ell_{k}})-2\partial_{-2,2}V(\vec{\xi}^{-2,2}_{\ell_{k}})\notag \\
+&\partial_{-2,-1}V(\vec{\zeta}^{-2,-1}_{\ell_{k}})+\partial_{-2,-2}V(\vec{\zeta}^{-2,-2}_{\ell_{k}})]( y_{h}'|_{T_{k+1}} -  y_{h}'|_{T_{k}}); \notag \\
&R^{\mo}_{\ell_k+1} = [\partial_{1,-1}V(\vec{\zeta}^{1,-1}_{\ell_{k}+1})+2\partial_{1,-2}V(\vec{\zeta}^{1,-2}_{\ell_{k}+1})-\partial_{21}V(\vec{\zeta}^{21}_{\ell_{k}+1})-\partial_{22}V(\vec{\zeta}^{22}_{\ell_{k}+1})\notag \\
+&\partial_{2,-1}V(\vec{\zeta}^{2,-1}_{\ell_{k}+1})+\partial_{2,-1}V(\vec{\xi}^{2,-1}_{\ell_{k}+1}) 
+2\partial_{2,-2}V(\vec{\zeta}^{2,-2}_{\ell_{k}+1})+2\partial_{2,-2}V(\vec{\xi}^{2,-2}_{\ell_{k}+1})\notag \\
-&\partial_{-1,-2}V(\vec{\zeta}^{-1,-2}_{\ell_{k}+1})-\partial_{-2,-2}V(\vec{\zeta}^{-2,-2}_{\ell_{k}+1})]( y_{h}'|_{T_{k+1}} -  y_{h}'|_{T_{k}}); \notag \\
&R^{\mo}_{\ell_k +2} = [\partial_{1,-2}V(\vec{\zeta}^{1,-2}_{\ell_{k}+2})+\partial_{2,-2}V(\vec{\zeta}^{2,-2}_{\ell_{k}+2})+2\partial_{2,-2}V(\vec{\xi}^{2,-2}_{\ell_{k}+2})+\partial_{2,-1}V(\vec{\zeta}^{2,-1}_{\ell_{k}+2})]( y_{h}'|_{T_{k+1}} -  y_{h}'|_{T_{k}}); \notag \\
&R^{\mo}_{\ell_k +3} = \partial_{2,-2}V(\vec{\zeta}^{2,-2}_{\ell_{k}+3})( y_{h}'|_{T_{k+1}} -  y_{h}'|_{T_{k}}),
\end{align}
where $\vec{\eta}^{ij}_{\ell}:=(\eta^{ij}_{\ell,1},2\eta^{ij}_{\ell,2},-\eta^{ij}_{\ell,3},-2\eta^{ij}_{\ell,4})\in \R^{1\times4}$, $\eta \in \{\zeta,\xi\}$ and $\vec{\eta}^{ij}_{\ell,m} \in \conv(\nabla y_{h}|_{T_k} , \nabla y_{h}|_{T_{k+1}})$ for $m=\pm1,\pm2$. In general, a type of multi-body potential has the following property in terms of its second-order derivative:
\begin{align*}
\sign(\partial_{ij}V(\vec{\eta}^{ij}_{\ell}))=
\left\{
\begin{array}{l l}
1,   &(i,j)=\pm(1,1),\pm(1,2),\\
-1,   &\text{otherwise},
\end{array} \right.
\end{align*}
for every $\ell \in \Cs$. This property can be confirmed by Tab \ref{Tab:2ed derivative} where we have calculated the values of each second-order derivative for a certain type of multi-body potential.

Once we obtain the sign of the derivatives, we can estimate each gradient jump term $R^{\mo}_{\ell}$ with the help of the assumption \ref{ass: assump_on_derivs_V} as
\begin{align*}
0 \le  &|R^{\mo}_{\ell_k-2}|,|R^{\mo}_{\ell_k+3}| \le \frac{1}{100}m^{\NNN}_2 |\nabla y_{h}|_{T_{k+1}} - \nabla y_{h}|_{T_{k}}|; \\
 \frac{1}{5}m_2^{\NNN}|\nabla y_{h}|_{T_{k+1}} - \nabla y_{h}|_{T_{k}}| \le &|R^{\mo}_{\ell_k-1}|,|R^{\mo}_{\ell_k+1}|  \le \frac{23}{100}M_2^{\NNN}|\nabla y_{h}|_{T_{k+1}} - \nabla y_{h}|_{T_{k}}|; \\
 0 \le &|R^{\mo}_{\ell_k}|,|R^{\mo}_{\ell_k+1}| \le 2M_2^{\NNN}|\nabla y_{h}|_{T_{k+1}} - \nabla y_{h}|_{T_{k}}|.
 \end{align*}

Further calculation combining the mesh regularity assumption \eqref{eq: mesh regular} \eqref{eq: mesh regular 2} and the definition of $\eta^z_k$ in \eqref{eq: zz est nodewise} gives us the result in theorem \ref{thm: equivalence of the model residual and the zz estimator}.

\end{proof}


\bibliographystyle{plain}
\bibliography{qc1}

\begin{thebibliography}{10}

\bibitem{AM_A_Post_ML_FK_Model_2007}
Marcel Arndt and Mitchell Luskin.
\newblock Goal-oriented atomistic-continuum adaptivity for the quasicontinuum
  approximation.
\newblock {\em Int. J. Multiscale Comput. Engrg.}, 5(49-50):407--415, 2007.

\bibitem{AM_A_Post_ML_FK_Model_2008_Model}
Marcel Arndt and Mitchell Luskin.
\newblock Error estimation and atomistic-continuum adaptivity for the
  quasicontinuum approximation of a {F}renkel-{K}ontorova model.
\newblock {\em Multiscale Model. Simul.}, 7(1):147--170, 2008.

\bibitem{AM_A_Post_ML_FK_Model_2008_FEM}
Marcel Arndt and Mitchell Luskin.
\newblock Goal-oriented adaptive mesh refinement for the quasicontinuum
  approximation of a {F}renkel-{K}ontorova model.
\newblock {\em Comput. Methods Appl. Mech. Engrg.}, 197(49-50):4298--4306,
  2008.

\bibitem{Babuska_Rheinboldt_81}
I.~Babuska and W.~Rheinboldt.
\newblock A posteriori error analysis of finite element solutions for
  one-dimensional problems.
\newblock {\em SIAM J. Numer. Anal.}, 18(3):565--589, 1981.

\bibitem{Blanc_From_A_to_C_2002}
X.~Blanc, C.~Le~Bris, and P.-L. Lions.
\newblock From molecular models to continuum mechanics.
\newblock {\em Arch. Ration. Mech. Anal.}, 164(4):341--381, 2002.

\bibitem{MB_KH_Crystal_Latices}
M.~Born and K~Huang.
\newblock {\em Dynamical Theory of Crystal Lattices}.
\newblock Oxford Classic Texts in the Physical Sciences. Clarendon Press, 1954.

\bibitem{Carstensen_Verfurth_99_Edge_Residual}
C.~Carstensen and R.~Verf{\"u}rth.
\newblock Edge residuals dominate a posteriori error estimates for low order
  finite element methods.
\newblock {\em SIAM J. Numer. Anal.}, 36(5):1571--1587, 1999.

\bibitem{EAM_Daw_Baskes_1984}
Murray~S. Daw and M.~I. Baskes.
\newblock Embedded-atom method: Derivation and application to impurities,
  surfaces, and other defects in metals.
\newblock {\em Phys. Rev. B}, 29:6443--6453, Jun 1984.

\bibitem{MD_ML_Opt_Order_2009}
M.~Dobson and M.~Luskin.
\newblock An optimal order error analysis of the one-dimensional quasicontinuum
  approximation.
\newblock {\em SIAM J. Numer. Anal.}, 47(4):2455--2475, 2009.

\bibitem{Dorfler_Conver_Adap_Alg_Poiss}
W.~D\"{o}rfler.
\newblock A convergent adaptive algorithm for poisson's equation.
\newblock {\em SIAM J. Numer. Anal.}, 33:1106--1124, 1996.

\bibitem{E_Lu_Yang_Geo_Consitent_2006}
W.~E, J.~Lu, and J.Z. Yang.
\newblock Uniform accuracy of the quasicontinuum method.
\newblock {\em Phys. Rev. B}, 74(21):214115, 2006.

\bibitem{JH_XL_HW_a_Post_Contr_Mo_Adap_17}
J.~He, X.~Liu, and H.~Wang.
\newblock A posteriori error control for an energy-based atomistic-to-continuum
  coupling method with model adaptivity.
\newblock {\em Manuscript}.

\bibitem{Li_Luskin_QNL_2012}
X.H. Li and M.~Luskin.
\newblock A generalized quasinonlocal atomistic-to-continuum coupling method
  with finite-range interaction.
\newblock {\em IMA J. Num. Anal.}, 32(2):373--393, 2012.

\bibitem{Lin_1D_LJ_QC_Ana_2003}
P.~Lin.
\newblock Theoretical and numerical analysis for the quasi-continuum
  approximation of a material particle model.
\newblock {\em Math. Comp.}, 72(242):657--675, 2003.

\bibitem{Lin_QCP_2D_2007}
P.~Lin.
\newblock Convergence analysis of a quasi-continuum approximation for a
  two-dimensional material without defects.
\newblock {\em SIAM J. Numer. Anal.}, 45(1):313--332, 2007.

\bibitem{PL_AS_QCP_2009}
P.~Lin and A.~Shapeev.
\newblock Energy-based ghost force removing techniques for the quasicontinuum
  method.
\newblock {\em ArXiv e-prints}, 0909.5437v1, 2009.

\bibitem{MRE_TEB_QCReview_2003}
R.~E. Miller and E.~B. Tadmor.
\newblock The quasicontinuum method: Overview, applications and current
  directions.
\newblock {\em Journal of Computer-Aided Materials Design}, 9:203--239, 2003.

\bibitem{MRE_TDB_QC_Review_2009}
Ronald~E Miller and E~B Tadmor.
\newblock A unified framework and performance benchmark of fourteen multiscale
  atomistic/continuum coupling methods.
\newblock {\em Modelling Simul. Mater. Sci. Eng.}, 17(5):053001, 2009.

\bibitem{PM_ZY_QNL_2009}
P.~Ming and J.~Yang.
\newblock Analysis of a one-dimensional nonlocal quasi-continuum method.
\newblock {\em Multiscale Model. Simul.}, 7(4):1838--1875, 2009.

\bibitem{MO_TEB_First_QC_1996}
M.~Ortiz, R.~Phillips, and E.~B. Tadmor.
\newblock Quasicontinuum analysis of defects in solids.
\newblock {\em Philosophical Magazine A}, 73(6):1529--1563, 1996.

\bibitem{CO_QNL1D_2011}
C.~Ortner.
\newblock A priori and a posteriori analysis of the quasinonlocal
  quasicontinuum method in 1{D}.
\newblock {\em Math. Comp.}, 80(275):1265--1285, 2011.

\bibitem{CO_2D_Patch_Test}
C.~{Ortner}.
\newblock {The role of the patch test in 2D atomistic-to-continuum coupling
  methods}.
\newblock {\em ESAIM Math. Model. Numer. Anal.}, 46, 2012.

\bibitem{CO_ML_ACCoupling_2013}
C.~Ortner and M.~Luskin.
\newblock Atomistic-to-continuum coupling.
\newblock {\em Acta Numerica}, 22:397--508, 2013.

\bibitem{CO_AS_ACC_2D_Analysis}
C.~Ortner and A.~Shapeev.
\newblock {Analysis of an Energy-based Atomistic/Continuum Coupling
  Approximation of a Vacancy in the 2D Triangular Lattice}.
\newblock {\em Math. Comp.}, 82:2191--2236, 2013.

\bibitem{CO_ES_1D_QC_2008}
C.~Ortner and E.~S{\"u}li.
\newblock Analysis of a quasicontinuum method in one dimension.
\newblock {\em M2AN Math. Model. Numer. Anal.}, 42(1):57--91, 2008.

\bibitem{CO_HW_QC_1D_A_Priori_2011}
C.~Ortner and H.~Wang.
\newblock A priori error estimates for energy-based quasicontinuum
  approximations of a periodic chain.
\newblock {\em Math. Models Methods Appl. Sc.}, 21:2491--2521, 2011.

\bibitem{CO_HW_ACC_1D_A_Posteriori}
C.~Ortner and H.~Wang.
\newblock A posteriori error control for a quasi-continuum approximation of a
  periodic chain.
\newblock {\em IMA J. Num. Anal.}, 34(3):977--1001, 2014.

\bibitem{CO_LZ_QC_2D_2011}
C.~Ortner and L.~Zhang.
\newblock Construction and sharp consistency estimates for atomistic/continuum
  coupling methods with general interfaces: A 2d model problem.
\newblock {\em SIAM J. Numer. Anal.}, 50(6):2940--2965, 2012.

\bibitem{CO_LZ_Finite_GRAC_2014}
C.~Ortner and L.~Zhang.
\newblock Energy-based atomistic-to-continuum coupling without ghost forces.
\newblock {\em Comput. Methods Appl. Mech. Engrg.}, 279:29--45, 2014.

\bibitem{Prudhomme_Bauman_Adaptive_A_to_C_2009}
Serge Prudhomme, Ludovic Chamoin, Hachmi~Ben Dhia, and Paul~T. Bauman.
\newblock An adaptive strategy for the control of modeling error in
  two-dimensional atomic-to-continuum coupling simulations.
\newblock {\em Computer Methods in Applied Mechanics and Engineering},
  198(21-26):1887 -- 1901, 2009.

\bibitem{Shapeev_Consisten_A_C_1D_2D}
Alexander~V. Shapeev.
\newblock {Consistent energy-based atomistic/continuum coupling for two-body
  potentials in one and two dimensions}.
\newblock {\em {Multiscale Model. Simul.}}, {9}({3}):{905--932}, 2011.

\bibitem{Shapeev_ACC_3D}
A.V. Shapeev.
\newblock Consistent energy-based atomistic/continuum coupling for two-body
  potentials in three dimensions.
\newblock {\em SIAM J. Sci. Comput.}, 34(3):335--360, 2012.

\bibitem{MO_EBT_QC_Adaptive_99}
V.~B. Shenoy, R.~Miller, E.~B. Tadmor, D.~Rodney, R.~Phillips, and M.~Ortiz.
\newblock An adaptive finite element approach to atomic-scale mechanics--the
  quasicontinuum method.
\newblock {\em J. Mech. Phys. Solids}, 47(3):611--642, 1999.

\bibitem{Shimokawa_QNL_Original_2004}
T.~Shimokawa, J.~J. Mortensen, J.~Schi\o{}tz, and K.~W. Jacobsen.
\newblock Matching conditions in the quasicontinuum method: Removal of the
  error introduced at the interface between the coarse-grained and fully
  atomistic region.
\newblock {\em Phys. Rev. B}, 69:214104, Jun 2004.

\bibitem{Tadmor_Miller_MM_2011}
E~Tadmor and R.~Miller.
\newblock {\em Modeling Materials}.
\newblock Cambridge University Press, 1st ed. edition, 2011.

\bibitem{VerfurthAPosteriori}
R.~Verf{\"u}rth.
\newblock {\em A Review of A Posterori Error Estimation and Adaptive
  Mesh-Refinement Techniques}.
\newblock Wiley-Teubner, Germany, 1996.

\bibitem{HW_ML_PL_LZ_2D_A_Post}
H.~Wang, M.~Liao, P.~Lin, and L.~Zhang.
\newblock A posteriori error estimation and adaptive algorithm for the
  atomistic/continuum coupling in 2d.
\newblock {\em ArXiv e-prints}, 1702.02701v1, 2017.

\bibitem{Zienkiewicz_Zhu_Error_Estimator_1987}
O.~C. Zienkiewicz and J.~Z. Zhu.
\newblock A simple error estimator and adaptive procedure for practical
  engineerng analysis.
\newblock {\em Int. J. Numer. Meth. Engng.}, 24:337--357, 1987.

\end{thebibliography}

\end{document}